\DeclareMathOperator*{\argmin}{arg\,min}
\newcommand{\N}{\mathbb{N}}
\newcommand{\R}{\mathbb{R}}
\newcommand{\PP}{\mathbb{P}}
\newcommand{\1}{1\! \mathrm{l}}
\newcommand{\E}{\mathbb{E}}
\newtheorem{theo}{Theorem}
\newtheorem{prop}{Proposition}
\newtheorem{cor}{Corollary}
\newtheorem*{assumption}{Assumption}
\newtheorem{lemma}{Lemma}
\author[1]{Elisabeth Gassiat}
\author[2]{Judith Rousseau}
\author[3]{Elodie Vernet}
\affil[1]{Laboratoire de Math\'ematiques d'Orsay, Univ. Paris-Sud, CNRS, Universit\'{e} Paris-Saclay, 91405 Orsay, France}
\affil[2]{CEREMADE, Universit\'{e} Paris Dauphine, Paris, Franc}
\affil[3]{CMAP, \'{E}cole Polytechnique, route de Saclay, 91128 Palaiseau Cedex, France}
\date{}                     
\title{Efficient semiparametric estimation and model selection for multidimensional mixtures}
\begin{document}

\maketitle

\begin{abstract}
In this paper, we consider nonparametric multidimensional finite mixture models and we are interested in
the semiparametric estimation of the population weights. Here, the i.i.d. observations are assumed to have at least three components which are independent given the population. 
We approximate the semiparametric model 
by projecting the conditional distributions on step functions associated to some partition. 
Our first main result is that if we refine the partition slowly enough, the associated sequence of maximum likelihood estimators of the weights is asymptotically efficient, and the posterior distribution of the weights, when using a Bayesian procedure, satisfies  
a semiparametric Bernstein-von Mises theorem. We then propose a cross-validation like method to select the partition in a finite horizon.  Our second main result is that the proposed procedure satisfies an oracle inequality. Numerical experiments  on simulated data illustrate our theoretical results.
\end{abstract}

\textbf{Keywords:} Semiparametric statistics, 
mixture models, efficiency, Bernstein von Mises Theorem

\tableofcontents

\section{Introduction}

We consider in this paper multidimensional mixture models that describe the  probability distribution  of a random vector $X$ with at least three coordinates. The model is a probability mixture of $k$ populations such that the coordinates of $X$ can be grouped into 3 blocks of  random variables which are conditionally independent given the population.
We call emission distributions the conditional distributions of the coordinates and $\theta$ the parameter that contains the probability weights of each population. 
It has been known for some time that such a model is identifiable under weak assumptions. 
When the coordinates of $X$ take finitely many values, Kruskal \cite{MR0444690} in 1977 provided an algebraic sufficient condition under which he proved identifiability. See also \cite{rhodes2010concise}.
Kruskal's result was recently used by \cite{allman2009identifiability} to obtain identifiability under almost no assumption on the possible emission distributions: only the fact that, for each coordinate, the $k$ emission distributions are 
linearly independent. Spectral methods were proposed by \cite{kakade14}, which allowed \cite{MR3476609} to derive estimators of the emission densities having the minimax rate of convergence when the smoothness of the emission densities is known. Moreover, \cite{BoJoRo16} proposes an estimation procedure  in the case of repeated measurements (where the emission distributions of each coordinate given a population are the same). 
\\

Our paper focuses on the semiparametric estimation of the population weights 
when nothing is known about the emission distributions. This is a semiparametric model, where the finite dimensional parameter of interest is $\theta$ and the infinite dimensional nuisance parameters are the emission distributions. 
In applications, the populations weights have direct interpretation. As an example, in \cite{MR2915165} the problem is to estimate the proportion   of cells of different types for diagnostic purposes, on the basis of flow cytometry data. Those data give the intensity of several markers  responses and may be modelled as multidimensional mixtures.

We are in particular interested in constructing optimal procedures for the estimation of $\theta$.  Optimal may be understood as efficient, in Le Cam's theory point of view which is about asymptotic distribution and asymptotic (quadratic) loss. See \cite {lecam_yang00}, \cite{MR1245941}, \cite{AadBook}, \cite{AadStFlour}.
The first question is: is the parametric rate attainable in the semiparametric setting? We know here, for instance using spectral estimates, that the parametric rate is indeed attainable. Then, the loss due to the nuisance parameter may be seen in the efficient Fisher information and efficient estimators are asymptotically equivalent to the empirical process on efficient influence functions. The next question is thus: how can we construct asymptotically efficient estimators? In the parametric setting, maximum likelihood estimators (MLEs) do the job, but the semiparametric situation is more difficult, because one has to deal with the unknown nuisance parameter, see the theorems in chapter 24 of  \cite{AadBook} where it is necessary to control various bias/approximation terms.

From a Bayesian perspective,  the issue is  the validity of the Bernstein-Von Mises property of the marginal  posterior distribution of the parameter of interet $\theta$. In other words: is the marginal posterior distribution of $\theta$ asymptotically Gaussian? Is it asymptotically centered around an efficient estimator? Is the asymptotic variance of the posterior distribution the inverse of the efficient Fisher information matrix? Semiparametric Bernstein-Von Mises theorems have been the subject of recent research, see  \cite{shen:2002}, \cite{boucherongassiat09}, \cite{JuRi}, \cite{icbvm}, \cite{icsan}, \cite{bickel:kleijn}, \cite{deblasi09} and \cite{JuRi}.
\\

The results of our paper are twofold: first we obtain asymptotically efficient semiparametric estimators using a likelihood strategy, then we propose a data driven method to perform the strategy in a finite horizon with an oracle inequality as theoretical guarantee.
\\

Let us describe our ideas.\\

For the multidimensional mixture model we consider, we will take advantage of the fact that, we can construct a parametric mixture model based on an approximation of the emission densities by piecewise constant functions  - i.e histograms - which acts as a correct model for a coarsened version of the observations (the observations are replaced by the number of points in each grid of the histograms).  So that as far as the parameter of interest is concerned, namely the weights of the mixture, this approximate model is in fact well specified, in particular the Kullback-Leibler divergence between the true distribution and the approximate model is minimized at the true value of the parameter of interest, see Section \ref{subsec:notations} for more details. 
For each of these finite dimensional models, the parameter of interest, i.e. the weights of the mixture,  may then be efficiently estimated within the finite dimensional model.  Then, under  weak assumptions, and using the fact that one can approximate any density on $[0,1]$ by such histograms based on partitions with radius (i.e. the size of the largest bin) going to zero, it is possible to prove that asymptotically efficient semiparametric estimators may be built using the sequence of MLEs in a growing (with sample size) sequence of approximation models. In the same way, using Bayesian posteriors in the growing sequence of approximation models, one gets  a Bernstein-Von Mises result. One of the important implications of the Bernstein-von Mises property is that credible regions, such as highest posterior density regions or credible ellipsoids are also confidence regions. In the particular case of the semiparametric mixtures, this is of great interest, since the construction of a confidence region is not  necessarily trivial.
This is our first main result which is stated in Theorem \ref{th:FreeLunch}:
 by considering partitions  refined slowly enough when the number of observations increases, we can derive efficient estimation procedures for the parameter of interest $\theta$ and in the Bayesian approach for a marginal posterior distribution on $\theta$ which satisfies the renowned Bernstein-von Mises property.
 
 We still need however in  practice  to choose a good partition, for a finite sample size. 
This can be viewed as a model selection problem. There is now a huge literature on model selection, both in the frequentist and in the Bayesian literature. Roughly speaking the methods can be split into two categories: penalized likelihood types of approaches, which include in particular AIC (Akaike's Information Criterion), BIC (Bayesian Information Criterion), MDL (Minimum Description Length) and marginal likelihood (Bayesian) criteria or approaches which consist in estimating the risk of the estimator in each model using  for instance bootstrap or cross-validation methods. In all these cases theory and practice are nowadays well grounded, see for instance \cite{hansen:yu:01}, \cite{robert01}, \cite{barbe:bertail:95},  \cite{massart03}, \cite{maugis2008slope}, \cite{MR2602303}, \cite{ClHj08}, \cite{To10}. Most of the existing results above cover parametric or nonparametric models. Penalized likelihoods in particular target models which are best in terms of Kullback-Leibler divergences typically and therefore aim at estimating the whole nonparametric parameter. Risk estimation via bootstrap or cross-validation methods are more naturally defined  in semiparametric (or more generally set-ups with  nuisance parameters) models, however the theory remains quite limited in cases where the estimation strategy is strongly non linear as encountered here.

The idea is to estimate the risk of the estimator in each approximation model, and then select the model with the smallest estimated risk.  We propose to use a cross-validation method similar to the one proposed in \cite{VdLaan}. To get theoretical results on such a strategy,  the usual basic tool is to write the cross-validation criterion as a function of the empirical distribution which is not possible in our semiparametric setting. We thus divide the sample in non overlapping blocks of size $a_n$ ($n$ being the  the sample size) to define the cross validation criterion. This enables us to prove our second main result: Theorem \ref{th:construction_Mn} which states an oracle inequality on  the quadratic risk associated with a sample of  size $a_n$ observations,  and which also leads to a criterion to select $a_n$.  Simulations indicate moreover that the approach behaves well in practice.\\ 

In Section \ref{sec:eff}, we first describe the model, set the notations and our basic assumptions.
We recall the semiparametric tools in
Section \ref{subsec:efficiency}, where we define the score functions and the efficient Fisher information matrices.
Using the fact that spectral estimators are smooth functions of the empirical distribution of the observations, we obtain that, for large enough approximation model, the efficient Fisher information matrix is full rank, see
Proposition \ref{prop:fisher}. Intuition says that with better approximation spaces, more is known about all parameters of the distribution, in particular about $\theta$. We prove in
Proposition \ref{prop:FisherGrow} that indeed,  when the partition is refined, the Fisher information associated to this partition increases. 
This leads to  our main general result presented in Theorem 1,
Section \ref{subsec:gene}: it is possible to let the approximation parametric models grow with the sample size so that the sequence of maximum likelihood estimators are asymptotically efficient in the semiparametric model and so that a semiparametric Bernstein-von Mises Theorem holds. To prove this result we prove in particular,  in
Lemma \ref{lem:convscore}, that semiparametric score functions and semiparametric efficient Fisher information matrix are the limits of the parametric ones obtained in the approximation parametric models, which has interest in itself, given the non explicit nature of semi-parametric efficient score functions and information matrices in such models. This implies in particular  that the semiparametric efficient Fisher information matrix is full rank follows.
In
Section \ref{sec:modelselection}, we propose a model selection approach to select the number of bins.  We first discuss in Section \ref{se:reasons_model_selection} the reasons to perform model selection and the fact that choosing a too large approximation space does not work, see Proposition \ref{prop:limit_thetaM}
and
Corollary \ref{co:bound_for M_n}. Then we propose in 
Section \ref{se:criterion_model_selection} our cross-validation criterion, for which we prove an oracle inequality in
Theorem \ref{th:construction_Mn}
and
Proposition \ref{prop:alm_oracle_ineq}.
Results of simulations are described in
Section \ref{sec:simu}, where we investigate several choices of the number and length of blocks for performing cross validation, and investigate practically also V-fold strategies. In Section \ref{sec:discussion} we present  possible extensions, open questions and further work.  Finally 
Section \ref{sec:proofs} is dedicated to proofs of intermediate propositions and lemmas.

\section{Asymptotic efficiency}
\label{sec:eff}

\subsection{Model and notations}
\label{subsec:notations}

Let $(X_{n})_{n\geq 1}$ be a sequence of independent and identically distributed random variables taking values in  the product of at least three compact subsets of Euclidean spaces which, for the sake of simplicity, we will set as $[0,1]^{3}$. We assume that the possible marginal distribution of an observation $X_{n}$, $n\geq 1$, is a population mixture of $k$ distributions such that, given the population, the coordinates are independent and have some density with respect to the Lebesgue measure  on $[0,1]$.
The possible densities of $X_{n}$, $n\geq 1$, are, if $\mathbf{x}=(x_{1},x_{2},x_{3})\in [0,1]^{3}$:
\begin{equation}
\label{eq:model1}
g_{\theta,\mathbf{f}}(\mathbf{x})=\sum_{j=1}^{k}\theta_{j}\prod_{c=1}^{3}f_{j,c}(x_{c}), \quad \sum_{j=1}^{k}\theta_{j}=1, \quad \theta_j \geq 0, \, \forall j 
\end{equation}
Here, $k$ is the number of populations, $\theta_j$ is the  probability to belong to population $j$ for $j \leq k$ and we set $\theta=(\theta_{1},\dots,\theta_{k-1})$. For each $j=1,\ldots,k$, $f_{j,c}$, $c=1,2,3$, is the density of the $c$-th coordinate of the observation, given that the observation comes from population $j$, and we set 
$\mathbf{f}=((f_{j,c})_{1\leq c \leq 3})_{1\leq j \leq k}$. We denote by $\PP^{\star}$ the true (unknown) distribution of the sequence $(X_{n})_{n\geq 1}$, such that $\PP^{\star}=P_{\theta^{\star},\mathbf{f}^{\star}}^{\otimes \N}$, $dP_{\theta^{\star},\mathbf{f}^{\star}}(\mathbf{x})=g_{\theta^{\star},\mathbf{f}^{\star}}(\mathbf{x})d\mathbf{x}$, for some 
$\theta^{\star}\in\Theta$ and $\mathbf{f}^{\star}\in{\cal F}^{3k}$, where $\Theta$ is  the set of possible parameters $\theta$ and ${\cal F}$ the set of probability densities on $[0,1]$.\\

\noindent
We  approximate the densities by step functions on some partitions  of $[0,1]$. We assume that we have a collection of partitions ${\cal I}_{M}$, $M\in {\cal M}$,  ${\cal M}\subset \N$, so that
for each $M\in {\cal M}$, 
${\cal I}_{M}=(I_{m})_{1\leq m \leq M}$ is a partition of $[0,1]$ by Borel sets. 
Then $I_{m}$ changes when $M$ changes.
For each $M\in {\cal M}$,  we now consider the model of possible densities
\begin{equation}
\label{eq:model2}
g_{\theta,\boldsymbol{\omega};M}(\mathbf{x})=\sum_{j=1}^{k}\theta_{j}\prod_{c=1}^{3}\left(\sum_{m=1}^{M}\frac{\omega_{j,c,m}}{|I_{m}|}\1_{I_{m}} (x_{c})\right).
\end{equation}
Here, $|I|$ is the Lebesgue measure of the set $I$, $\boldsymbol{\omega}=(\omega_{j,c,m})_{1\leq m \leq M-1,1\leq c\leq 3,1\leq j \leq k}$, and for each $j=1,\ldots,k$, each $c=1,2,3$, each $m=1,\ldots,M-1$, 
$\omega_{j,c,m}\geq 0$, $\sum_{m=1}^{M-1}\omega_{j,c,m} \leq 1$, and we denote $\omega_{j,c,M}=1-\sum_{m=1}^{M-1}\omega_{j,c,m}$. 

\noindent
We denote $\Omega_{M}$ the set of possible parameters $\boldsymbol{\omega}$ when using  model (\ref{eq:model2}) with the partition  ${\cal I}_{M}$.

For any partition ${\cal I}_{M}$, any
$\omega=(\omega_{m})_{1\leq m \leq M-1}$ such that $\omega_{m}\geq 0$, $m=1,\ldots,M$, with $\omega_{m}=1-\sum_{m=1}^{M-1}\omega_{m}$, denote $f_{\omega}$ the step function given by
\begin{equation}
\label{fomega}
f_{\omega}(x)=\sum_{m=1}^{M}\frac{\omega_{m}}{|I_{m}|}\1_{I_{m}} (x), \quad \omega_{m}=\int_{0}^{1}f_{\omega}(u)\1_{I_{m}} (u)du.
\end{equation}
When $\boldsymbol{\omega} \in \Omega_M$, let $\mathbf{f}_{\boldsymbol{\omega}}=((f_{\omega_{j,c}})_{1\leq c \leq 3})_{1\leq j \leq k}$.\\

 \noindent
Possible extensions of our results to model \eqref{eq:model1} with non compact support, or with more than three coordinates, or with multivariate coordinates, and to model \eqref{eq:model2} with different sequences of partitions for each coordinate are discussed in Section \ref{sec:discussion}.\\

An interesting feature of  step function approximation is that the Kullback-Leibler divergence $KL((\theta^\star,\mathbf{f}^\star),(\theta,\mathbf{f}_{\boldsymbol{\omega}}))$ between the distribution with density $g_{(\theta^\star,\mathbf{f}^\star)}$ and that with density $g_{(\theta,{\boldsymbol{\omega}};M)}$, when $(\theta,{\boldsymbol{\omega}})\in \Theta\times\Omega_M$, is minimised at 
$$
\theta=\theta^{\star},\;\boldsymbol{\omega}=\boldsymbol{\omega}^{\star}_{M}:=\left( \int_{0}^{1}f_{j,c}^{\star}(u) \1_{I_{m}} (u)du\right)_{1\leq m \leq M-1,1\leq c\leq 3,1\leq j \leq k}.
$$
Indeed
\begin{equation*}
\begin{split}
KL&((\theta^\star,\mathbf{f}^\star),(\theta,\mathbf{f}_{\boldsymbol{\omega}}))
 := \E^\star \left[ \log\left(\frac{g_{\theta^{\star},\mathbf{f}^{\star}}(X)}{g_{\theta,\boldsymbol{\omega};M}(X) }\right) \right]  \\
 &
 = \E^\star \left[ \log\left(\frac{g_{\theta^{\star},\mathbf{f}^{\star}}(X)}{g_{\theta^{\star},\boldsymbol{\omega}^{\star};M}(X) }\right) \right]
 +
\E^\star \left[ \log\left(\frac{g_{\theta^{\star},\boldsymbol{\omega}^{\star};M}(X)}{g_{\theta,\boldsymbol{\omega};M}(X) }\right) \right]
 \\ 
 &
 = KL((\theta^\star,\mathbf{f}^\star),(\theta^{\star},\mathbf{f}_{\omega^{\star}}))
 +
  \E^\star \bigg( \sum_{\substack{1\leq m_1 \leq M\\ 1\leq m_2 \leq M\\1\leq m_3 \leq M}}  \prod_{c=1}^3 \1_{X_c\in I_{m_c}}  \log\left(\frac{g_{\theta^{\star},\boldsymbol{\omega}^{\star};M}(X)}{g_{\theta,\boldsymbol{\omega};M}(X) }\right) \bigg)
 \\
 &=
 KL((\theta^\star,\mathbf{f}^\star),(\theta^\star,\mathbf{f}_{{\boldsymbol{\omega}}^\star}))
 +
 KL((\theta^\star,\mathbf{f}_{{\boldsymbol{\omega}}^\star}),(\theta,\mathbf{f}_\omega)).
\end{split}
\end{equation*}
This particularity can also be obtained considering $Y_{i}:=((\1_{I_{m}}(X_{i,c}))_{1\leq m \leq M})_{1\leq c\leq 3}$  when the $X_i$'s have probability density \eqref{eq:model1}.
Indeed,   the density of the observations $Y_i$ is exactly the probability density \eqref{eq:model2} with
$$
\theta=\theta^{\star},\;\boldsymbol{\omega}=\boldsymbol{\omega}^{\star}_{M}
$$
This cornerstone property is specific to the chosen approximation, i.e. the step function approximation.\\
\noindent
Let $\ell_{n}(\theta,\boldsymbol{\omega};M)$ be the log-likelihood using model (\ref{eq:model2}), that is
$$
\ell_{n}(\theta,\boldsymbol{\omega};M)=\sum_{i=1}^{n}\log g_{\theta,\boldsymbol{\omega};M}(X_{i}).
$$ 
We  denote, for each $M\in{\cal M}$,
$(\widehat{\theta}_{M}, \widehat{{\boldsymbol{\omega}}}_{M})$ the maximum likelihood estimator (shortened as MLE), that is a maximizer of $\ell_{n}(\theta,\boldsymbol{\omega};M)$ over $\Theta\times \Omega_{M}$.\\

\noindent
Let  $\Pi_{M}$ denote a prior distribution on the parameter space $\Theta\times\Omega_{M}$. The posterior distribution
$\Pi_{M}(\cdot\vert X_{1},\ldots,X_{n})$ is defined as follows. For any Borel subset $A$ of $\Theta\times\Omega_{M}$,
$$
\Pi_{M}(A\vert X_{1},\ldots,X_{n})=\frac{\int_{A}  \prod_{i=1}^n g_{\theta,\boldsymbol{\omega};M}(X_{i})d\Pi_{M}(\theta,\boldsymbol{\omega})}{\int_{\Theta\times\Omega_{M}}  \prod_{i=1}^n g_{\theta,\boldsymbol{\omega};M}(X_{i})d\Pi_{M}(\theta,\boldsymbol{\omega})}.
$$
\noindent
The first requirement to get consistency of estimators or posterior distributions is the identifiability of the model. We  use the following assumption.
\begin{assumption}[A1]
$\\$
$\bullet$ For all $j=1,\ldots,k$, $\theta_{j}^{\star}>0$.\\
$\bullet$ For all $c=1,2,3$, for all $j=1,\ldots,k$, $f^{\star}_{j,c}$ is continuous and almost surely positive and 
\begin{equation}  \label{moment:cond}\forall j_1, j_2 \leq k,  \quad 0 < \inf_x \frac{f_{j_1,c}^\star(x) }{f_{j_2,c}^\star(x)} \leq \sup_x \frac{f_{j_1,c}^\star(x) }{f_{j_2,c}^\star(x)}<  +\infty \end{equation}
$\bullet$  For all $c=1,2,3$, the measures $f^{\star}_{1,c}dx,\ldots,f^{\star}_{k,c}dx$ are linearly independent.
\end{assumption}

Note that the two first points in Assumption (A1) imply that for all $M$,  $(\theta^{\star},{\boldsymbol{\omega}}^{\star}_{M})$ lies in the interior of $\Theta\times \Omega_{M}$.
\\

\noindent
It is proved in Theorem 8 of \cite{allman2009identifiability} that under (A1) identifiability holds up to label switching, that is, if ${\cal T}_{k}$ is the set of permutations of $\{1,\ldots,k\}$,
$$
\forall \theta\in\Theta,\;\forall \mathbf{f}\in {\cal F}^{3k},\;g_{\theta,\mathbf{f}}=g_{\theta^{\star},\mathbf{f}^{\star}}\Longrightarrow \exists \sigma\in{\cal T}_{k}
\;{\text{such that}}\;
\prescript{\sigma}{}{\theta}=\theta^{\star},\;\prescript{\sigma}{}{\bf f}={\bf f}^{\star},
$$
where $\prescript{\sigma}{}{\theta} \in \Theta$ (resp. $\prescript{\sigma}{}{\bf f}\in\mathcal{F}^{3k}$, 
$\prescript{\sigma}{}{\theta}_{j}=\theta_{\sigma(j)}$, $\prescript{\sigma}{}{f}_{j,c}=f_{\sigma(j),c}$,  $c\in \{1,2,3\}$, $j \in \{1,,\ldots,k\}$) denotes the image of $\theta$ after permuting the labels using $\sigma$. 
This also implies  the identifiability  of model (\ref{eq:model2})  if the partition is refined enough. We also need the following assumption to ensure that all functions $f_{j,c;M}^{\star}$
 tend to $f^{\star}_{j,c}$ Lebesgue almost everywhere, where $f_{j,c;M}^{\star}$ is the function defined in (\ref{fomega}) with $\omega=(\omega^{\star}_{j,c,m})_{m}$.
 
\begin{assumption}[A2]
$\\$
$\bullet$ For all $M$, the sets $I_{m}$ in ${\cal I}_{M}$ are intervals with non empty interior.\\
$\bullet$ As $M$ tends to infinity, $\max_{1\leq m \leq M} |I_{m}|$ tends to $0$.
\end{assumption}

\noindent
Then, if (A1) and (A2) hold, for $M$ large enough, we have that for all  $c=1,2,3$,  the measures $f_{1,c;M}^{\star}dx,\ldots,f_{k,c;M}^{\star}dx$ are linearly independent.
We give a formal proof of this fact in Section \ref{subsec:proof:fisher}.
Thus, using again the identifiability result in \cite{allman2009identifiability}, under (A1) and (A2), for $M$ large enough,
\begin{multline*}
\forall \theta\in\Theta,\;\forall \boldsymbol{\omega}\in\Omega_{M},\;g_{\theta,\boldsymbol{\omega};M}=g_{\theta^{\star},\boldsymbol{\omega}^{\star}_{M};M}\\
\Longrightarrow
 \exists \sigma\in{\cal T}_{k}\;{\text{such that}}\;
\prescript{\sigma}{}{\theta}=\theta^{\star},\;\prescript{\sigma}{}{{\boldsymbol{\omega}}}={\boldsymbol{\omega}}^\star_M, \quad
\end{multline*}
where $\prescript{\sigma}{}{{\boldsymbol{\omega}}}\in\Omega_{M}$ and
$\prescript{\sigma}{}{\omega}_{j,c,m}=\omega_{\sigma(j),c,m}$, for all $m\in\{1,\ldots,M\}$, $c\in\{1,2,3\}$, $j\in\{1,\ldots,k\}$.

\subsection{Efficient influence functions and efficient Fisher informations}
\label{subsec:efficiency}

We  now study  the estimation of $\theta$ in model (\ref{eq:model1}) and in model (\ref{eq:model2}) from the semiparametric point of view, following Le Cam's theory. 
We  start with model (\ref{eq:model2}) which is easier to analyze since it is a parametric model. For any $M$, $g_{\theta,\boldsymbol{\omega};M}(\mathbf{x})$ is a polynomial function of the parameter $(\theta,\boldsymbol{\omega})$ and the model is differentiable in quadratic mean 
{ in the interior of $\Theta\times \Omega_{M}$.} Denote by
$S^{\star}_{M}=(S^{\star}_{\theta,M},S^{\star}_{\boldsymbol{\omega},M})$ the score function  for parameter $(\theta,\boldsymbol{\omega})$ at point $(\theta^{\star},\boldsymbol{\omega}^{\star}_{M})$  in model (\ref{eq:model2}). We have for $j=1,\ldots,k-1$
\begin{equation}
\label{eq:scoreM1}
\left(S^{\star}_{\theta,M}\right)_{j}=\frac{\prod_{c=1}^{3}f_{\omega^{\star}_{j,c;M}}-\prod_{c=1}^{3}f_{\omega^{\star}_{k,c;M}}}{g_{\theta^{\star},\boldsymbol{\omega}^{\star}_{M};M}},
\end{equation}
and for $j=1,\ldots,k$, $c=1,2,3$, $m=1,\ldots,M-1$
\begin{equation}
\label{eq:scoreM2}
\left(S^{\star}_{\boldsymbol{\omega},M}\right)_{j,c,m}=\frac{\theta_{j}^{\star}\left(\frac{\1_{I_{m}} (x_{c})}{|I_{m}|}-\frac{\1_{I_{M}} (x_{c})}{|I_{M}|}\right)\prod_{c'\neq c}f_{\omega^{\star}_{j,c';M}}}{g_{\theta^{\star},\boldsymbol{\omega}^{\star}_{M};M}}.
\end{equation}
Denote by $J_{M}$ the Fisher information, that is the variance of $S^{\star}_{M}(X)$:
$$
J_{M}=\E^{\star}\left[S^{\star}_{M}(X)S^{\star}_{M}(X)^{T}\right]
$$
Here, $\E^{\star}$ denotes expectation under $\PP^{\star}$, and $S^{\star}_{M}(X)^{T}$ is the transpose vector of $S^{\star}_{M}(X)$.\\

\noindent
When considering the question of efficient estimation of $\theta$ in the presence of a nuisance parameter, the relevant mathematical objects are the efficient influence function and the efficient Fisher information.  Recall that  the efficient score function is the projection of the score function with respect to parameter $\theta$ on the orthogonal subspace of the closure of the linear subspace spanned by the tangent set with respect to the nuisance parameter (that is the set of scores in parametric models regarding the nuisance parameter), see \cite{AadBook} or \cite{AadStFlour} for details.
The efficient Fisher information is the variance matrix of the efficient score function. For parametric models, a direct computation gives the result. If we partition the Fisher information $J_{M}$ according to the parameters $\theta$ and ${\boldsymbol{\omega}}$, that is
$$
[J_{M}]_{\theta,\theta}=\E^{\star}\left[S^{\star}_{\theta,M}(X)S^{\star}_{\theta,M}(X)^{T}\right],\;[J_{M}]_{\boldsymbol{\omega},\boldsymbol{\omega}}=\E^{\star}\left[S^{\star}_{\boldsymbol{\omega},M}(X)S^{\star}_{\boldsymbol{\omega},M}(X)^{T}\right],\;
$$
$$[J_{M}]_{\theta,\boldsymbol{\omega}}=\E^{\star}\left[S^{\star}_{\theta,M}(X)S^{\star}_{\boldsymbol{\omega},M}(X)^{T}\right],\;[J_{M}]_{\boldsymbol{\omega},\theta}=[J_{M}]_{\theta,\boldsymbol{\omega}}^{T},\;
$$
we get that, in model  (\ref{eq:model2}), 
if we denote $\tilde{\psi}_{M}$  the efficient score function for the estimation of $\theta$,
$$
\tilde{\psi}_{M}=S^{\star}_{\theta,M}-[J_{M}]_{\theta,\boldsymbol{\omega}}([J_{M}]_{\boldsymbol{\omega},\boldsymbol{\omega}})^{-1}S^{\star}_{\boldsymbol{\omega},M},
$$
and the efficient Fisher information $\tilde{J}_{M}$ is the $(k-1)\times (k-1)$-matrix given by
$$
\tilde{J}_{M}=[J_{M}]_{\theta,\theta}-[J_{M}]_{\theta,\boldsymbol{\omega}}([J_{M}]_{\boldsymbol{\omega},\boldsymbol{\omega}})^{-1}[J_{M}]_{\theta,\boldsymbol{\omega}}^{T}.
$$
To discuss efficiency of estimators, invertibility of the efficient Fisher information is needed. Spectral methods have been proposed recently to get estimators in model (\ref{eq:model2}), see \cite{kakade14}. It is possible to obtain upper bounds  of their local maximum quadratic risk with rate $n^{-1/2}$, which as a consequence excludes the possibility that the efficient Fisher information be singular. This is stated in Proposition \ref{prop:fisher} below and proved in Section \ref{subsec:proof:fisher}.
\begin{prop}
\label{prop:fisher}
Assume (A1) and (A2). Then, for large enough $M$, $\tilde{J}_M$ is non singular.
\end{prop}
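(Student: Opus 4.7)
The plan is to deduce non-singularity of $\tilde J_M$ from the existence of a $\sqrt n$-consistent estimator of $\theta$ in model (\ref{eq:model2}), combined with the Hájek--Le Cam convolution theorem for parametric DQM models. First I would verify identifiability of model (\ref{eq:model2}) at $(\theta^\star,\omega^\star_M)$ for $M$ large enough. Under (A2), a standard Lebesgue approximation argument gives $f_{\omega^\star_{j,c;M}} \to f^\star_{j,c}$ in $L^1([0,1])$ for each $j,c$. Since linear independence of a $k$-tuple of $L^1$ functions is an open property (the Gram determinant of any three coordinates is continuous and non-vanishing), (A1) implies that the $k$ step functions $f_{\omega^\star_{j,c;M}}$ are linearly independent for $M$ large and each $c=1,2,3$. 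Theorem 8 of \cite{allman2009identifiability} then yields identifiability of model (\ref{eq:model2}) up to label switching.

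Next I would use the spectral/tensor decomposition methods of \cite{kakade14} applied to the multinomial vector $Y=((\1_{I_m}(X_c))_{1\le m\le M})_{1\le c\le 3}$: the joint law of $Y$ is a mixture of $k$ product multinomials, and the spectral construction produces an estimator $\widehat\theta_M^{\mathrm{spec}}$ which is a smooth (in fact algebraic) function, in a neighborhood of the true moment tensor, of the empirical joint probabilities $\widehat P_n(Y=y)$. By the multinomial CLT and the delta method, $\sqrt n (\widehat\theta_M^{\mathrm{spec}}-\theta^\star)$ converges in distribution to a centered Gaussian with some finite covariance matrix $\Sigma_M$.

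To close the argument, I would observe that model (\ref{eq:model2}) is polynomial in $(\theta,\omega)$ with density bounded below on the partition at $(\theta^\star,\omega^\star_M)$, so it is differentiable in quadratic mean and LAN holds. The estimator $\widehat\theta_M^{\mathrm{spec}}$, being asymptotically linear in the empirical distribution of $Y$, is regular at $(\theta^\star,\omega^\star_M)$: under local alternatives $(\theta^\star+h/\sqrt n,\omega^\star_M+g/\sqrt n)$ the multinomial frequencies shift deterministically by $O(n^{-1/2})$, and smoothness of the spectral map absorbs this shift into the standard CLT limit. The Hájek--Le Cam convolution theorem then yields $\Sigma_M \succeq \tilde J_M^{-1}$ in the PSD order. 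If $\tilde J_M$ had a non-zero null vector $v$, the bound $v^T\Sigma_M v \ge v^T \tilde J_M^{-1} v = +\infty$ would contradict the finiteness of $\Sigma_M$, so $\tilde J_M$ must be non-singular.

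The main obstacle is verifying the regularity of $\widehat\theta_M^{\mathrm{spec}}$ needed to invoke the convolution theorem, i.e.\ that its asymptotic law is invariant along $n^{-1/2}$-local alternatives. This essentially follows from smoothness of the spectral algorithm in the multinomial frequencies together with contiguity, but writing it out cleanly requires checking that the neighborhood on which the spectral map is smooth is stable under $O(n^{-1/2})$ deterministic perturbations of the true moment tensor, and that the algebraic formula for $\widehat\theta_M^{\mathrm{spec}}$ returns $\theta^\star+h/\sqrt n+o_{\mathbb P}(n^{-1/2})$ under such alternatives.
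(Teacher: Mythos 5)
Your overall strategy coincides with the paper's: (i) establish linear independence of the binned densities $f_{\omega^\star_{1,c;M}},\dots,f_{\omega^\star_{k,c;M}}$ for $M$ large (the paper does this by contradiction, extracting a limit point of normalized coefficient vectors and using a.e.\ convergence of the conditional-expectation step functions under (A2); your Gram-determinant/openness argument in $L^1$ is an equivalent packaging), then (ii) exploit the $\sqrt n$-rate of the spectral estimator of \cite{kakade14}, viewed as a smooth function of the empirical multinomial frequencies, and (iii) conclude by a Le Cam-type lower bound that a singular $\tilde J_M$ is incompatible with that rate. Steps (i) and (ii) match the paper.

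The difference, and the place where your argument has a genuine gap as written, is step (iii). You invoke the H\'ajek--Le Cam convolution theorem to get $\Sigma_M \succeq \tilde J_M^{-1}$ and then write $v^T\Sigma_M v \ge v^T\tilde J_M^{-1}v = +\infty$ for a null vector $v$. But the convolution theorem presupposes a non-singular efficient information matrix in order to even state the bound; you cannot apply it under the hypothesis you are trying to refute, so the displayed inequality is formally vacuous. To repair it you would need the standard submodel argument: if $v^T\tilde J_M v=0$, then for every $\epsilon>0$ there is a one-dimensional parametric submodel through $(\theta^\star,\omega^\star_M)$ along which $\langle v,\theta\rangle$ moves at unit speed while the Fisher information is below $\epsilon$, and the parametric convolution (or local asymptotic minimax) bound in that submodel forces the asymptotic variance of any \emph{regular} estimator of $\langle v,\theta\rangle$ above $1/\epsilon$. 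This is essentially what the paper does, but in the local-minimax form: following Theorem 4 of \cite{EGGS}, singularity of $\tilde J_M$ in direction $V$ forces $\lim_{c}\lim_n \sup_{\|\theta-\theta^\star\|\le c/\sqrt n}\bigl(\langle \widehat\theta_{sp}-\theta,V\rangle\bigr)^2 = +\infty$, which contradicts the uniform risk bound for the spectral estimator. That formulation has the additional advantage of applying to arbitrary estimator sequences, so the paper never has to verify the regularity of the spectral estimator under $n^{-1/2}$ local alternatives --- the point you correctly identify as the main remaining obstacle in your version. If you either switch to the minimax form of the bound or carry out the small-information submodel argument, your proof closes.
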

In the context of mixture models, all asymptotic results are given up to label switching. We define here formally what we mean by `up to label switching' for frequentist efficiency results with Equation \eqref{eq:effiM_def_label_switching}  and Bayesian efficiency results with Equation \eqref{eq:bvmM_def_label_switching}.

By Proposition \ref{prop:fisher}, if (A1) and (A2) hold, for large enough $M$, $\tilde{J}_{M}$ is non singular, and an estimator $\widehat{\theta}$ is asymptotically a regular efficient estimator of $\theta^{\star}$ if and only if
\begin{equation}
\label{eq:effiM}
\sqrt{n}\left(\widehat{\theta}-\theta^{\star}\right)=\frac{\tilde{J}_{M}^{-1}}{\sqrt{n}}\sum_{i=1}^{n}\tilde{\psi}_{M}\left(X_{i}\right)+o_{\PP^{\star}}(1), \quad \text{up to label switching,}
\end{equation}
which formally means that there exists a sequence $(\sigma_n)_n$ belonging to $\mathcal{T}_k$ such that
\begin{equation}
\label{eq:effiM_def_label_switching}
\sqrt{n}\left(\prescript{\sigma_n}{}{\widehat{\theta}}-\theta^{\star}\right)=\frac{\tilde{J}_{M}^{-1}}{\sqrt{n}}\sum_{i=1}^{n}\tilde{\psi}_{M}\left(X_{i}\right)+o_{\PP^{\star}}(1).
\end{equation}
To get an asymptotically  regular efficient estimator, one may for instance use the MLE  $\widehat{\theta}_{M}$ (see the beginning of the proof of Theorem \ref{th:FreeLunch}). One may also apply a one step improvement (see Section 5.7
 in \cite{AadBook}) of a preliminary spectral estimator, such as the one described in \cite{kakade14}. 
 
\noindent
In the Bayesian context, Bernstein-von Mises Theorem holds for large enough $M$ if the prior has a positive density in the neighborhood of $(\theta^{\star},\boldsymbol{\omega}^{\star}_{M})$, see Theorem 10.1 in \cite{AadBook}. 
That is,  
if $\|\cdot\|_{TV}$ denotes the total variation distance, 
with $\Pi_{M,\theta}$ the marginal distribution on the parameter $\theta$,
\begin{equation}
\label{eq:bvmM}
\left\|\Pi_{M,\theta}\left(\cdot \vert X_{1},\ldots,X_{n}\right)-{\cal N}\left(\widehat{\theta}; \frac{\tilde{J}_{M}^{-1}}{n}   \right)\right\|_{TV}=o_{\PP^{\star}}(1), \quad \text{up to label switching,}
\end{equation}
where $\widehat{\theta}$ verifies Equation \eqref{eq:effiM}, 
which formally means that
\begin{equation}
\label{eq:bvmM_def_label_switching}
\sup_{A\subset \Theta} \left|\Pi_{M,\theta}\big(\exists \sigma \in \mathcal{T}_k: ~ \prescript{\sigma}{}{\theta} \in A
 \big\vert X_{1},\ldots,X_{n}\big)
 -
 {\cal N}\left(\prescript{\sigma_n}{}{\widehat{\theta}}; \frac{\tilde{J}_{M}^{-1}}{n}   \right) (A) \right|=o_{\PP^{\star}}(1),
\end{equation}
where $(\sigma_n)$ and $\widehat{\theta}$ satisfy Equation \eqref{eq:effiM_def_label_switching}.

\noindent
A naive heuristic idea is that, when using the $Y_{i}$'s as summaries of the $X_{i}$'s, one has less information, but more and more if the partition  ${\cal I}_{M}$ is refined. Thus, the efficient Fisher information should grow when partitions ${\cal I}_{M}$ are refined. The following proposition is proved in Section \ref{subsec:proof:FisherGrow}. 
\begin{prop}
\label{prop:FisherGrow}
Let ${\cal I}_{M_{1}}$ be a coarser partition than ${\cal I}_{M_{2}}$, that is such that for any $I\in {\cal I}_{M_{1}}$, there exists $A\subset {\cal I}_{M_{2}}$ such that $I=\cup_{I'\in A} I'$. Then
$$
\tilde{J}_{M_{2}} \geq \tilde{J}_{M_{1}}
$$
in which`` $\geq$" denotes the partial order between symmetric matrices.
\end{prop}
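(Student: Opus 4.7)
The idea is to exploit the fact that the multinomial observations associated with $\mathcal{I}_{M_1}$ are a deterministic coarsening of those associated with $\mathcal{I}_{M_2}$, and that the $M_1$-model is exactly the pushforward of the $M_2$-model under this coarsening; the inequality will then follow from the contraction property of conditional expectation viewed as an orthogonal projection. Denote by $Y^{(M)}:=((\1_{I_m}(X_c))_{m\le M})_{c\le 3}$ the bin-indicator vector of $X$ with respect to $\mathcal{I}_M$, and let $\phi$ be the deterministic map such that $Y^{(M_1)}=\phi(Y^{(M_2)})$. Let $L$ be the linear ``summation'' map sending $\omega^{(M_2)}$ to the vector $L(\omega^{(M_2)})$ whose entries are obtained by summing $\omega_{j,c,m_2}$ over the fine bins $I_{m_2}$ contained in each coarse bin of $\mathcal{I}_{M_1}$. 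Then $L(\omega^\star_{M_2})=\omega^\star_{M_1}$, $L$ is surjective, and the pushforward of $g_{\theta,\omega^{(M_2)};M_2}$ to $Y^{(M_1)}$ coincides with the distribution of $Y^{(M_1)}$ under $g_{\theta,L(\omega^{(M_2)});M_1}$. Denote by $\Pi_1$ the $L^2(\PP^\star)$-orthogonal projection onto functions of $Y^{(M_1)}$, i.e.\ the conditional expectation $\E^\star[\,\cdot\,|Y^{(M_1)}]$.

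\medskip

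Next I derive the score identities
\[
S^\star_{\theta,M_1}=\Pi_1 S^\star_{\theta,M_2},\qquad L^T S^\star_{\omega,M_1}=\Pi_1 S^\star_{\omega,M_2},
\]
which follow from the standard fact that the score of a pushforward model equals the conditional expectation of the score of the underlying model (trivially for $\theta$; via the chain rule through $\omega^{(M_1)}=L(\omega^{(M_2)})$ for the $\omega$-component, which produces the factor $L^T$). Combined with the surjectivity of $L$, the second identity shows that the nuisance tangent space $T_{M_1}:=\mathrm{span}(S^\star_{\omega,M_1})\subset L^2(\PP^\star)$ is exactly the image $\Pi_1(T_{M_2})$ of $T_{M_2}$.

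\medskip

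The proof then concludes via the Schur complement / variational characterization
\[
u^T\tilde{J}_M u\;=\;\inf_{h}\E^\star\bigl[(u^T S^\star_{\theta,M}-h^T S^\star_{\omega,M})^2\bigr]\;=\;\mathrm{dist}^2_{L^2(\PP^\star)}\bigl(u^T S^\star_{\theta,M},T_M\bigr),
\]
combined with the elementary Hilbert-space fact that for any orthogonal projection $\Pi$ and closed subspace $T$ one has $\mathrm{dist}(\Pi f,\Pi T)\le\mathrm{dist}(f,T)$ (every $g\in T$ furnishes a competitor $\Pi g\in\Pi T$ with $\|\Pi f-\Pi g\|=\|\Pi(f-g)\|\le\|f-g\|$). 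Applying this with $\Pi=\Pi_1$, $f=u^T S^\star_{\theta,M_2}$ and $T=T_{M_2}$ gives $u^T\tilde{J}_{M_1}u\le u^T\tilde{J}_{M_2}u$ for every $u\in\R^{k-1}$, which is precisely $\tilde{J}_{M_2}\ge\tilde{J}_{M_1}$. The only delicate step is the tangent-space identity $T_{M_1}=\Pi_1(T_{M_2})$: one must correctly track the chain rule in the nuisance score and the surjectivity of $L$, after which the whole argument is just the contraction property of conditional expectation.
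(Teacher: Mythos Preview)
Your proof is correct and takes a genuinely different route from the paper's. The paper argues by induction, splitting one bin at a time: it reparametrizes the refined model $M{+}1$ by $(\theta,\omega^{(M)},\alpha)$ where $\alpha_{j,c}$ encodes how the mass of the split bin is shared, and then verifies by a direct computation that the cross-moment $\E^\star[(\nabla\log g_{\cdot;M})D^T]$ vanishes (with $D=\nabla\log g_{\cdot;M+1}-\nabla\log g_{\cdot;M}$), so that the new Fisher information is the old one, block-extended by zeros, plus a positive semi-definite correction $\Delta$; the Schur-complement monotonicity then gives $\tilde J_{M+1}\ge\tilde J_M$. Your argument instead invokes the classical fact that the score in a coarsened model is the conditional expectation of the fine-model score, together with the variational formula $u^T\tilde J_M u=\mathrm{dist}^2(u^TS^\star_{\theta,M},T_M)$ and the contraction $\|\Pi_1(f-g)\|\le\|f-g\|$. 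This bypasses induction entirely and handles an arbitrary refinement in one step; it also makes transparent that the inequality is nothing but a data-processing inequality for the efficient information. Two small remarks: the conditional expectation in your score identity is a priori under the model at $(\theta^\star,\omega^\star_{M_2})$, and you are implicitly using that this agrees with $\E^\star[\,\cdot\mid Y^{(M_1)}]$ on functions of $Y^{(M_2)}$ because the $M_2$-model is correctly specified there; and for the final inequality you only need the inclusion $\Pi_1(T_{M_2})\subseteq T_{M_1}$ (immediate from the chain-rule identity), so the surjectivity of $L$ is not actually required, though it does give the equality you state.
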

\noindent
Thus, it is of interest to let the partitions grow so that one reaches the largest efficient Fisher information.
\\

\noindent
Let us now come back to model (\ref{eq:model1}). Let, for $j=1,\ldots,k$, $c=1,2,3$, ${\cal H}_{j,c}$ be the subset of functions $h$ in 
$ L^{2}(f^{\star}_{j,c}dx)$ 
such that $\int h f^{\star}_{j,c}dx=0$. Then the tangent set for ${\mathbf{f}}$ at point $(\theta^{\star},\mathbf{f}^{\star})$ is
the subspace ${\dot{\cal P}}$ of  $L^{2}(g_{\theta^{\star},\mathbf{f}^{\star}}(\mathbf{x})d\mathbf{x})$ 
spanned by the functions
$$
\mathbf{x} \mapsto \frac{h(x_{c})\prod_{c'=1}^3 f^{\star}_{j,c'}(x_{c'})}{g_{\theta^{\star},\mathbf{f}^{\star}}(\mathbf{x})},\;h\in{{\cal H}_{j,c}},\;j=1,\ldots,k,\;c=1,2,3.
$$
Notice that for each  $j=1,\ldots,k$ and $c=1,2,3$, ${\cal H}_{j,c}$ is a closed linear subset of $ L^{2}(f^{\star}_{j,c}dx)$ and that ${\dot{\cal P}}$ is a closed linear subset of  $L^{2}(g_{\theta^{\star},\mathbf{f}^{\star}}(\mathbf{x})d\mathbf{x})$.
The efficient score function  $\tilde{\psi}$ for the estimation of $\theta$ in the semiparametric model (\ref{eq:model1}) is given, for $j=1,\ldots,k-1$, by
\begin{equation}
\label{eq:effiscore}
\tilde{\psi}_{j}=\left(S^{\star}_{\theta}\right)_{j}-{\mathbb{A}} \left(S^{\star}_{\theta}\right)_{j},\;\left(S^{\star}_{\theta}\right)_{j}=\frac{\prod_{c=1}^{3}f^{\star}_{j,c}-\prod_{c=1}^{3}f^{\star}_{k,c}}{g_{\theta^{\star},\mathbf{f}^{\star}}},
\end{equation}
with $\mathbb{A}$ the orthogonal projection onto 
${\dot{\cal P}}$ in  $L^{2}(g_{\theta^{\star},\mathbf{f}^{\star}}(\mathbf{x})d\mathbf{x})$. Then,
the efficient Fisher information  $\tilde{J}$ is the variance matrix of $\tilde{\psi}$.\\
If $\tilde{J}$ is non singular,
an estimator $\widehat{\theta}$ is asymptotically a regular efficient estimator of $\theta^{\star}$ if and only if
\begin{equation}
\label{eq:effi}
\sqrt{n}\left(\widehat{\theta}-\theta^{\star}\right)=\frac{\tilde{J}^{-1}}{\sqrt{n}}\sum_{i=1}^{n}\tilde{\psi}\left(X_{i}\right)+o_{\PP^{\star}}(1), \quad \text{up to label switching}
\end{equation}
and a Bayesian method using a nonparametric prior $\Pi$ satisfies a semiparametric Bernstein-von Mises theorem if, with $\Pi_{\theta}$ the marginal distribution on the parameter $\theta$,
\begin{equation}
\label{eq:bvm}
\left\|\Pi_{\theta}\left(\cdot \vert X_{1},\ldots,X_{n}\right)-{\cal N}\left(\widehat{\theta}; \frac{\tilde{J}^{-1}}{n}   \right)\right\|_{TV}=o_{\PP^{\star}}(1), \quad \text{up to label switching}
\end{equation}
for a $\widehat{\theta}$ satisfying (\ref{eq:effi}).
\noindent

\subsection{General result}
\label{subsec:gene}

When the size of the bins in the partition decreases,
 we expect that the efficient score functions in (\ref{eq:model2}) are good approximations of the efficient score functions in (\ref{eq:model1}) so that asymptotically efficient estimators in model (\ref{eq:model2}) become efficient estimators in model (\ref{eq:model1}). This is what Theorem \ref{th:FreeLunch} below states, under the following additional assumption :
\begin{assumption}[A3]
For all $M$ large enough, ${\cal I}_{M}$ is a coarser partition than ${\cal I}_{M +1}$
\end{assumption}

We first obtain:
\begin{lemma}
\label{lem:convscore}
Under Assumptions (A1), (A2) and (A3),
 the sequence of score functions $({\tilde{\psi}}_{M})_{M}$ converges in $L^{2}(g_{\theta^{\star},\mathbf{f}^\star}d\mathbf{x})$ to the score function $\tilde{\psi}$,  and the sequence of efficient Fisher informations $({\tilde{J}}_{M})_{M}$ converges to the efficient Fisher information matrix $\tilde{J}$, 
 which is non singular. 
\end{lemma}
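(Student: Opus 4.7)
I would prove this in three stages: (i) $L^{2}$-convergence $S^{\star}_{\theta,M}\to S^{\star}_{\theta}$; (ii) Mosco-type convergence of the nuisance tangent sets $\dot{\mathcal{P}}_{M}$ to $\dot{\mathcal{P}}$, yielding strong convergence of the $L^{2}$ orthogonal projections $\mathbb{A}_{M}\to\mathbb{A}$; and (iii) a synthesis giving $\tilde{\psi}_{M}\to\tilde{\psi}$ in $L^{2}$, from which $\tilde{J}_{M}\to\tilde{J}$ follows by continuity of the variance functional.

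For stage (i), note first that (A3) forces each $f^{\star}_{j,c}$ to be bounded: integrating the pointwise bound $g_{\theta^{\star},\mathbf{f}^{\star}}\leq 1/\delta$ over the two coordinates other than $c$ and invoking (A1) yields $f^{\star}_{j,c}\leq 1/(\delta\theta^{\star}_{j})$. Since $\omega^{\star}_{j,c,m;M}/|I_{m}|$ is the Lebesgue mean of $f^{\star}_{j,c}$ on $I_{m}$, the approximants $f_{\omega^{\star}_{j,c;M}}$ are uniformly bounded by $\|f^{\star}_{j,c}\|_{\infty}$. Assumption (A2) and the Lebesgue differentiation theorem give a.e.\ convergence $f_{\omega^{\star}_{j,c;M}}\to f^{\star}_{j,c}$, which dominated convergence upgrades to $L^{2}$. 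Combined with the uniform convergence $g_{\theta^{\star},\mathbf{\omega}^{\star}_{M};M}\to g_{\theta^{\star},\mathbf{f}^{\star}}$ from (A3) (and the uniform lower bound $g_{\theta^{\star},\mathbf{f}^{\star}}\geq\delta$), the explicit formula \eqref{eq:scoreM1} yields $S^{\star}_{\theta,M}\to S^{\star}_{\theta}$ in $L^{2}(g_{\theta^{\star},\mathbf{f}^{\star}}\,d\mathbf{x})$.

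Stage (ii) is the core. The \emph{inner} half of Mosco convergence is explicit: given $\phi = h(x_{c})\prod_{c'=1}^{3}f^{\star}_{j,c'}/g_{\theta^{\star},\mathbf{f}^{\star}}\in\dot{\mathcal{P}}$ with $h\in\mathcal{H}_{j,c}$, let $h_{M}$ be the $L^{2}(f^{\star}_{j,c}\,dx)$-orthogonal projection of $h$ onto step functions on $\mathcal{I}_{M}$. Because $h_{M}$ is constant on each cell, the identity $\int h_{M}f^{\star}_{j,c}\,dx = \int h_{M}f_{\omega^{\star}_{j,c;M}}\,dx$ holds, so both vanish, which places $\phi_{M} := h_{M}(x_{c})\prod_{c'}f_{\omega^{\star}_{j,c';M}}/g_{\theta^{\star},\mathbf{\omega}^{\star}_{M};M}$ in $\dot{\mathcal{P}}_{M}$; together with density of step functions in $L^{2}$ (from $\max_{m}|I_{m}|\to 0$) and stage (i), this gives $\phi_{M}\to\phi$ strongly. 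The \emph{outer} half --- that every weak $L^{2}$-limit of a bounded sequence in $\dot{\mathcal{P}}_{M}$ lies in $\overline{\dot{\mathcal{P}}}$ --- is the real difficulty: decompose $\phi_{M}=\sum_{j,c}T_{M}u_{M,j,c}$ with $T_{M}u := \theta^{\star}_{j}u(x_{c})\prod_{c'\neq c}f_{\omega^{\star}_{j,c';M}}/g_{\theta^{\star},\mathbf{\omega}^{\star}_{M};M}$ and $u_{M,j,c}$ a Lebesgue mean-zero step function on $\mathcal{I}_{M}$, take the minimum-norm decomposition, and pass to weak subsequential limits of the $u_{M,j,c}$; combined with stage (i) this identifies the weak limit as $\sum_{j,c}Tu^{\star}_{j,c}\in\dot{\mathcal{P}}$.

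The main obstacle is a uniform-in-$M$ lower bound on the smallest nonzero singular value of $T_{M}$, needed to control the minimum-norm decomposition and thereby ensure the $u_{M,j,c}$ stay bounded in $L^{2}$. I expect this to follow from identifiability arguments in the spirit of Proposition~\ref{prop:fisher}. Granted Mosco convergence, standard Hilbert-space theory delivers strong convergence $\mathbb{A}_{M}\to\mathbb{A}$; the decomposition $\tilde{\psi}_{M}-\tilde{\psi} = (I-\mathbb{A}_{M})(S^{\star}_{\theta,M}-S^{\star}_{\theta})-(\mathbb{A}_{M}-\mathbb{A})S^{\star}_{\theta}$ together with non-expansiveness of $\mathbb{A}_{M}$ yields $\tilde{\psi}_{M}\to\tilde{\psi}$ in $L^{2}(g_{\theta^{\star},\mathbf{f}^{\star}}\,d\mathbf{x})$, and $\tilde{J}_{M}\to\tilde{J}$ follows from continuity of the variance functional plus the uniform equivalence of the $L^{2}(g_{\theta^{\star},\mathbf{\omega}^{\star}_{M};M})$ and $L^{2}(g_{\theta^{\star},\mathbf{f}^{\star}})$ norms from (A3).
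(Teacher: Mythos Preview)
Your stages (i) and (iii) are essentially the paper's argument: convergence $S^{\star}_{\theta,M}\to S^{\star}_{\theta}$ via Lebesgue differentiation and dominated convergence, and the decomposition of $\tilde{\psi}_{M}-\tilde{\psi}$ into a score-difference term and a projection-difference term, are exactly what the paper does.

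The real divergence is your stage (ii). You treat $\mathbb{A}_{M}\to\mathbb{A}$ as a full Mosco-convergence problem, identify the outer half (weak limits from $\dot{\mathcal{P}}_{M}$ land in $\overline{\dot{\mathcal{P}}}$) as the hard part, and propose to handle it via a uniform-in-$M$ lower bound on the smallest nonzero singular value of $T_{M}$. You yourself call this the ``main obstacle'' and only say you \emph{expect} it to follow from identifiability considerations; as written this is a genuine gap, not a proof.

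The paper sidesteps this entirely with a single observation you did not make: under (A3) one has the inclusion $\dot{\mathcal{P}}_{M}\subset\dot{\mathcal{P}}$. (The paper also notes that step functions constant on the cells $I_{m_1}\times I_{m_2}\times I_{m_3}$ have the same inner products against $g_{\theta^{\star},\mathbf{f}^{\star}}$ and against $g_{\theta^{\star},\omega^{\star}_{M};M}$, so that $\mathbb{A}_{M}$, taken as the orthogonal projection in $L^{2}(g_{\theta^{\star},\mathbf{f}^{\star}}d\mathbf{x})$ onto $\dot{\mathcal{P}}_{M}$, really does produce the parametric efficient score $\tilde{\psi}_{M}$.) The inclusion renders your outer Mosco half trivial and yields the factorization $\mathbb{A}_{M}\mathbb{A}=\mathbb{A}_{M}$. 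With that in hand, only the inner approximation is needed --- which you also sketch --- and the paper simply writes
\[
\left\|\mathbb{A}_{M}\bigl[\mathbb{A}(S^{\star}_{\theta})_{j}\bigr]-\mathbb{A}(S^{\star}_{\theta})_{j}\right\|_{L^{2}(g_{\theta^{\star},\mathbf{f}^{\star}}d\mathbf{x})}
\leq
\left\|S_{M}-\mathbb{A}(S^{\star}_{\theta})_{j}\right\|_{L^{2}(g_{\theta^{\star},\mathbf{f}^{\star}}d\mathbf{x})}\to 0
\]
for any sequence $S_{M}\in\dot{\mathcal{P}}_{M}$ approximating $\mathbb{A}(S^{\star}_{\theta})_{j}$. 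No singular-value analysis, no weak-limit extraction. So your route is workable in spirit but is both more elaborate than necessary and leaves an unproved technical step that the paper's inclusion argument makes unnecessary.
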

The invertibility of $\tilde{J}$ is a consequence of Proposition \ref{prop:fisher}, Proposition \ref{prop:FisherGrow}, and the convergence of $\tilde J_M$ to $\tilde J$.

We are now ready to state Theorem \ref{th:FreeLunch}.
\begin{theo}
\label{th:FreeLunch}
Under Assumptions (A1), (A2),  and (A3), 
 there exists a sequence $M_{n}$ tending to infinity sufficiently slowly such that the MLE $\widehat{\theta}_{M_{n}}$ is asymptotically a regular efficient estimator of $\theta^{\star}$ and satisfies 
\begin{equation}\label{MLE:eff}
\sqrt{n}\left(\widehat{\theta}_{M_{n}}-\theta^{\star}\right)=\frac{\tilde{J}^{-1}}{\sqrt{n}}\sum_{i=1}^{n}\tilde{\psi}\left(X_{i}\right)+o_{\PP^{\star}}(1), \quad \text{up to label switching}.
\end{equation}
Moreover, under the same assumptions and if  for all $M$, the prior $\Pi_{M}$   has a positive  and continuous density at $(\theta^{\star},\boldsymbol{\omega}^{\star}_{M})$, then there exists a sequence $L_{n}$ tending to infinity sufficiently slowly such that 
\begin{multline}\label{bvm:eff}
\left\|\Pi_{L_{n},\theta}\left(\cdot \vert X_{1},\ldots,X_{n}\right)-{\cal N}\left(\theta^{\star}+\frac{\tilde{J}^{-1}}{n}\sum_{i=1}^{n}\tilde{\psi}\left(X_{i}\right); \frac{\tilde{J}^{-1}}{n}   \right)\right\|_{TV} \!\!\!\!=o_{\PP^{\star}}(1), \\ \text{up to label switching}.
\end{multline}
\end{theo}

Note that, in Theorem \ref{th:FreeLunch}, any sequence $M_n'\leq M_n$ going to infinity also satisfies \eqref{MLE:eff} and similarly \eqref{bvm:eff} holds for any sequence $L_n'\leq L_n$ going to infinity. 

\begin{proof}
As explained in Section \ref{subsec:notations}, model \eqref{eq:model2} is the correct model associated with the observations made of the counts per bins and under Assumption (A1) it is a regular model in the neighborhood of the true parameter. 
 Also, using the identifiability of the model and the trick given in  \cite{AadBook} p. 63,  we get consistency of the MLE. Thus, it is possible to apply Theorem 5.39 in  \cite{AadBook} to get that for each $M$, the MLE  $\widehat{\theta}_{M}$ is regular and asymptotically efficient, that is
$$
\sqrt{n}\left(
\prescript{\sigma_{n,M}}{}
{\widehat{\theta}}_{M}-\theta^{\star}\right)=\frac{\tilde{J}_{M}^{-1}}{\sqrt{n}}\sum_{i=1}^{n}\tilde{\psi}_{M}\left(X_{i}\right)+R_{n}(M),
$$
where for each $M$, $\sigma_{n,M}$ is a sequence of permutations in ${\cal T}_{k}$, and 
$(R_{n}(M))_{n\geq 1}$ is a sequence of random vectors converging to $0$ in $\PP^{\star}$-probability as $n$ tends to infinity. Therefore, there exists  a sequence $M_{n}$ tending to infinity sufficiently slowly so that, as  $n$ tends to infinity, $R_{n}(M_{n})$ tends to $0$  in $\PP^{\star}$-probability. Now,
\begin{eqnarray*}
\frac{\tilde{J}_{M_{n}}^{-1}}{\sqrt{n}}\sum_{i=1}^{n}\tilde{\psi}_{M_{n}}\left(X_{i}\right)&=&\frac{\tilde{J}^{-1}}{\sqrt{n}}\sum_{i=1}^{n}\tilde{\psi}\left(X_{i}\right)+\frac{\tilde{J}_{M_{n}}^{-1}-\tilde{J}^{-1}}{\sqrt{n}}\sum_{i=1}^{n}\tilde{\psi}\left(X_{i}\right)
\\
& &+\frac{\tilde{J}_{M_{n}}^{-1}}{\sqrt{n}}\sum_{i=1}^{n}(\tilde{\psi}_{M_{n}}-\tilde{\psi})\left(X_{i}\right)
\\
&=&\frac{\tilde{J}^{-1}}{\sqrt{n}}\sum_{i=1}^{n}\tilde{\psi}\left(X_{i}\right)+o_{\PP^{\star}}(1)
\end{eqnarray*}
since, by Lemma \ref{lem:convscore}, $\E^{\star}\|\frac{1}{\sqrt{n}}\sum_{i=1}^{n}(\tilde{\psi}_{M_{n}}-\tilde{\psi})\left(X_{i}\right)\|^{2}=\|\tilde{\psi}_{M_{n}}-\tilde{\psi}\|_{L^{2}(g_{\theta^{\star},\mathbf{f}^{\star}}(\mathbf{x})d\mathbf{x})}^{2}$ tends to $0$ as $n$ tends to infinity and $(\tilde{J}_{M_n})^{-1}$ converges to $(\tilde{J})^{-1}$ as $n$ tends to infinity,
so that the first part of the theorem is proved.\\
On the Bayesian side, for all $M$, there exists  a sequence $V_{n}(M)$ of random vectors converging to $0$  in $\PP^{\star}$-probability as $n$ tends to infinity such that
$$
\sup_{A\subset \Theta} \left|\Pi_{M,\theta}\big( \exists \sigma\in \mathcal{T}_k: ~ \prescript{\sigma}{}{\theta} \in A \big\vert X_{1},\ldots,X_{n}\big)
-{\cal N}\left(\prescript{\sigma_{n,M}}{}{\widehat{\theta}}_{M}; \frac{\tilde{J}_{M}^{-1}}{n}   \right)(A)\right|=V_{n}(M).
$$
Arguing as previously, there exists  a sequence $L_{n}$ tending to infinity sufficiently slowly so that, as  $n$ tends to infinity, both $V_{n}(L_{n})$  and $R_{n}(L_{n})$ tend to $0$  in $\PP^{\star}$-probability. 
Using the fact that the total variation distance is invariant through one-to-one transformations we get
\begin{multline*}
\left\|{\cal N}\left(\prescript{\sigma_{n,M}}{}{\widehat{\theta}}_{M}; \frac{\tilde{J}_{M}^{-1}}{n}\right)-{\cal N}\left(\theta^{\star}+\frac{\tilde{J}^{-1}}{n}\sum_{i=1}^{n}\tilde{\psi}\left(X_{i}\right); \frac{\tilde{J}^{-1}}{n}   \right)\right\|_{TV}\\
= 
\left\|{\cal N}\left(\sqrt{n}\left(\prescript{\sigma_{n,M}}{}{\widehat{\theta}}_{M}-\theta^{\star}\right)-
\frac{\tilde{J}^{-1}}{\sqrt{n}}\sum_{i=1}^{n}\tilde{\psi} \left(X_{i}\right); \tilde{J}_{M}^{-1}\right)-{\cal N}\left(0;\tilde{J}^{-1}  \right) \right\|_{TV}\\
= 
\left\|{\cal N}\left(\tilde{J}_{M}^{1/2}[\sqrt{n}\left(\prescript{\sigma_{n,M}}{}{\widehat{\theta}}_{M}-\theta^{\star}\right)-
\frac{\tilde{J}^{-1}}{\sqrt{n}}\sum_{i=1}^{n}\tilde{\psi}\left(X_{i}\right)]; Id\right)-{\cal N}\left(0;\tilde{J}_{M}\tilde{J}^{-1}  \right) \right\|_{TV}\\
\leq 
\left\|{\cal N}\left(\tilde{J}_{M}^{1/2}[\sqrt{n}\left(\prescript{\sigma_{n,M}}{}{\widehat{\theta}}_{M}-\theta^{\star}\right)-
\frac{\tilde{J}^{-1}}{\sqrt{n}}\sum_{i=1}^{n}\tilde{\psi}\left(X_{i}\right)]; Id\right)-{\cal N}\left(0;Id  \right) \right\|_{TV}\\
+\left\|{\cal N}\left(0,Id\right)-{\cal N}\left(0;\tilde{J}_{M}\tilde{J}^{-1}  \right)\right\|_{TV}.
\end{multline*}
But for vectors in $m\in {\mathbb R}^{k-1}$ and symmetric positive $(k-1)\times (k-1)$ matrices $\Sigma$ we have
$$
\left\|{\cal N}\left(m,Id\right)-{\cal N}\left(0;Id  \right)\right\|_{TV} \leq \|m\|
$$
and
\begin{multline*}
\left\|{\cal N}\left(0,Id\right)-{\cal N}\left(0;\Sigma \right)\right\|_{TV}\leq \PP \left(\|\Sigma^{1/2}U\|^{2}-\|U\|^{2} \geq \log [det (\Sigma)] \right)\\-\PP \left(\|U\|^{2}-\|\Sigma^{-1/2}U\|^{2} \geq \log [det (\Sigma)] \right)
\end{multline*}
where $U\sim {\cal N}\left(0,Id\right)$.
Thus the last part of the theorem follows from the triangular inequality and the fact that using Lemma \ref{lem:convscore},  as n tends to infinity, $\tilde{J}_{L_n}\tilde{J}^{-1}$ tends to $Id$, the identity matrix, and $V_{n}(L_{n})$  and $R_{n}(L_{n})$ tend to $0$  in $\PP^{\star}$-probability.
\end{proof}

\section{Model selection}
\label{sec:modelselection}

In Theorem \ref{th:FreeLunch}, we prove the existence of some increasing partition leading to efficiency. In this section, we propose a  practical method to choose a partition when the number of observations $n$ is fixed. In Section \ref{se:reasons_model_selection} we prove that one has to take care to choose not too large $M_{n}$'s since sequences $(M_{n})_{n}$ tending too quickly to infinity lead to inconsistent estimators. In Section \ref{se:criterion_model_selection}, we propose a cross-validation method to estimate the oracle value $M_{n}^{\star}$ minimizing the unknown risk as a function of $M$:
$M\mapsto \E^{\star} \left[\|\widehat{\theta}_{M}-\theta^{\star}\|^{2} \right]$ (up to label switching). 

\subsection{Behaviour of the MLE  as $M$ increases}\label{se:reasons_model_selection}

We first explain why the choice of the model is important. We have seen in Proposition~\ref{prop:FisherGrow} that for a sequence of increasing partitions, the efficient matrix is non decreasing. The question is then: can we take any sequence tending to infinity wih $n$? Or, for a fixed $n$, can we take any 
$M$ arbitrarily large? As is illustrated in Figure 2, we see that if $M$ is too large  (or equivalently if $M_n$ goes to infinity too fast) the MLE (or the Bayesian procedure) is biased. 

In Proposition \ref{prop:limit_thetaM}, we give the limit of the MLE when the number  $n$ of observations is fixed but $M$ tends to infinity.

\begin{prop}\label{prop:limit_thetaM}
Under Assumptions (A1) and  (A2). For almost all observations $X_1, \dots, X_n$, $\widehat{\theta}_M(X_1, \dots, X_n)$ tends to 
 \[\underline{\theta}_n=
 (\underbrace{\lfloor n/k \rfloor/n, \dots , \lfloor n/k \rfloor/n}_{r:=n-k\lfloor n/k \rfloor }
 , 
 \underbrace{\lceil n/k \rceil/n, \dots, \lceil n/k \rceil/n}_{k-r })
 \]
 up to label switching, when M tends to infinity.
\end{prop}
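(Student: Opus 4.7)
My plan is to exploit that, for $M$ large, each observation has its three coordinates in bins that no other observation uses. First, condition on the measure-one event where the $3n$ values $\{X_{i,c}\}_{1\le i\le n,\,1\le c\le 3}$ are pairwise distinct (true since the marginals admit densities under (A1)). By (A2), for all $M$ large enough the bin indices $m(i,c)$ of $X_{i,c}$ in $\mathcal{I}_M$ are then pairwise distinct across $i$ for each fixed $c$; call this event $E_M$ and work on it throughout.

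Next I would rewrite $\ell_n(\theta,\omega;M)$ via the EM/Jensen identity. Setting $V_i := \prod_{c=1}^3 |I_{m(i,c)}|^{-1}$ and $a_{i,j}(\omega) := \prod_{c=1}^3 \omega_{j,c,m(i,c)}$, we have $g_{\theta,\omega;M}(X_i) = V_i\sum_j \theta_j a_{i,j}(\omega)$, so
\[
\ell_n(\theta,\omega;M) = \sum_i \log V_i + \max_{q\in(\Delta_{k-1})^n}\sum_{i,j} q_{i,j}\bigl(\log\theta_j + \log a_{i,j}(\omega) - \log q_{i,j}\bigr).
\]
For fixed $q$, maximizing in $\theta$ on the simplex gives $\theta_j = N_j/n$ with $N_j:=\sum_i q_{i,j}$, and maximizing in each $\omega_{j,c,\cdot}$ gives $\omega_{j,c,m} = N_{j,c,m}/N_j$ with $N_{j,c,m}:=\sum_{i:\,m(i,c)=m}q_{i,j}$. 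On $E_M$ each $N_{j,c,m}$ reduces to a single $q_{i,j}$; substitution yields
\[
\max_{\theta,\omega}\ell_n(\theta,\omega;M) = \sum_i \log V_i - n\log n + \max_{q\in(\Delta_{k-1})^n} F(q),
\]
where $F(q) := -2\sum_j N_j\log N_j + 2\sum_{i,j} q_{i,j}\log q_{i,j}$.

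The heart of the proof is then to show $\max_q F$ is attained at a hard balanced assignment. From $\partial F/\partial q_{i,j} = 2\log(q_{i,j}/N_j)$, fix $i$ and $j_0\neq j_1$ and swap $q_{i,j_0}(\varepsilon)=a+\varepsilon$, $q_{i,j_1}(\varepsilon)=b-\varepsilon$; a direct computation gives
\[
\frac{d^{2} F}{d\varepsilon^{2}} = 2\Bigl(\tfrac{1}{q_{i,j_0}}-\tfrac{1}{N_{j_0}}\Bigr)+2\Bigl(\tfrac{1}{q_{i,j_1}}-\tfrac{1}{N_{j_1}}\Bigr)\ge 0,
\]
since $q_{i,j}\le N_j$. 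So $F$ is convex along every swap segment inside a row of $q$, and its maximum on that segment is attained at an endpoint where one of $q_{i,j_0},q_{i,j_1}$ is $0$. Iterating this reduction over all rows and pairs shrinks the support of each row to a single index without decreasing $F$; the maximizer may thus be taken to be a hard assignment $\sigma:\{1,\ldots,n\}\to\{1,\ldots,k\}$, for which $F(\sigma)=-2\sum_j n_j(\sigma)\log n_j(\sigma)$. Convexity of $x\mapsto x\log x$ then shows this is maximized exactly when the cluster sizes $n_j(\sigma)$ lie in $\{\lfloor n/k\rfloor,\lceil n/k\rceil\}$ with the multiplicities defining $\underline{\theta}_n$.

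Finally, any MLE $(\widehat\theta_M,\widehat\omega_M)$ determines via the EM fixed-point identity a $q^\ast$ that also maximizes $F$; by the previous step $q^\ast$ is a balanced hard assignment, and reading off $\widehat\theta_{M,j}=N_j(q^\ast)/n$ yields $\widehat\theta_M=\underline{\theta}_n$ up to label switching for every $M$ large enough, so the stated convergence is immediate. I expect the main obstacle to be the swap-convexity step: because $F$ is neither concave nor convex globally on $(\Delta_{k-1})^n$, the argument pinning down the hard-assignment structure of the maximizer hinges on the local identity above.
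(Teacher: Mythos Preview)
Your argument is correct and takes a genuinely different route from the paper. The paper works directly on $\ell_n(\theta,\omega;M)$ via Lagrange multipliers: it parameterizes the possible support patterns of $\omega$ by sets $A_j\subset\{1,\dots,n\}$, derives the first-order conditions $\bar\omega_{j,c,i+n(c-1)}=\mathds{1}_{i\in A_j}/(n\sum_{s\in J(i)}\bar\theta_s)$, and then computes a diagonal entry of the Hessian of the reduced log-likelihood to show that any critical point with $\#J(i)>1$ fails to be a local maximum, forcing the $(A_j)_j$ to form a partition; the balanced shape then comes from minimizing $\sum_j N_j\log N_j$ over integer partitions. You instead invoke the EM variational identity to profile out $(\theta,\omega)$ in one stroke, reducing everything to maximizing the explicit function $F(q)=-2\sum_jN_j\log N_j+2\sum_{i,j}q_{i,j}\log q_{i,j}$ over $q\in(\Delta_{k-1})^n$, and your swap-convexity computation plays exactly the role of the paper's second-order check. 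The EM route is cleaner and avoids the paper's case-by-case bookkeeping over support patterns $\mathcal{S}_{J,A_1,\dots,A_k}$; the paper's route, once the parameterization is in place, is a single Hessian entry.

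One point to tighten in your last paragraph: you establish that the maximum of $F$ is \emph{attained} at balanced hard assignments, but to read off $\widehat\theta_{M,j}=N_j(q^\ast)/n$ you need the particular $q^\ast$ produced as the E-step from the MLE to itself be hard balanced (so that all $N_j>0$ and the M-step in $\theta$ is uniquely $N_j/n$). This follows because your swap second derivative is \emph{strictly} positive unless $q_{i,j_0}=N_{j_0}$ and $q_{i,j_1}=N_{j_1}$; a soft maximizer would therefore force two clusters to carry total mass below $1$, which for $n\ge k$ is incompatible with $F(q^\ast)$ matching the value at a balanced partition (all $n_j\ge 1$). Adding this one-line strictness remark closes the loop.
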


Proposition \ref{prop:limit_thetaM} is proved in Section \ref{sec:proofProp4}.

Using Proposition \ref{prop:limit_thetaM}, we can deduce a constraint on sequences $M_n$ leading to consistent estimation of $\theta^{\star}$, depending on the considered sequence of partitions $({\cal I}_M)_{M\in {\cal M}}$, which may give an upper bound on sequences $M_n$ leading to efficiency. We believe that this constraint is very conservative and leads to very conservative bounds. Corollary \ref{co:bound_for M_n} below is proved in Section \ref{sec:proof_cor_lim_Mn}.

\begin{cor}\label{co:bound_for M_n}
Assume  that (A1), (A2) and (A3) hold.
If $\widehat{\theta}_{M_n}$ tends to $\theta^{\star}$ in probability
and if $\theta^{\star}$ is different from $(1/k, \dots ,1/k)$,
then there exists $N>0$ and a constant $C>0$ such that for all $n\geq N$,
\[n^2 \left(\max_{m\leq M_n} |I_m|\right)^2 M_n \geq C.
\]
In  particular, if  there exists $0<C_1\leq C_2$ such that for all $n\in \mathbb{N}$ and $1\leq m \leq M_n$,
\begin{equation}\label{hyp:bornes_sur_taille_segments}
\frac{C_1}{M_n} \leq |I_m | \leq \frac{C_2}{M_n}
\end{equation}
then 
there exists a constant $C>0$ such that,
\[
M_n\leq C n^2
.\]
\end{cor}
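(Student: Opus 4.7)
The plan is to prove the first assertion by contradiction. Assume, along some subsequence $(n_\ell)$, that $n_\ell^2 M_{n_\ell}(\max_m |I_m|)^2 \to 0$; I will show that along this subsequence $\widehat\theta_{M_n}$ converges in probability to $(1/k,\ldots,1/k)$ up to label switching, contradicting the hypothesis $\theta^\star\neq(1/k,\ldots,1/k)$.

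First, introduce the ``no-collision'' event $A_n$: for every coordinate $c\in\{1,2,3\}$ the values $X_{1,c},\ldots,X_{n,c}$ lie in pairwise distinct bins of $\mathcal{I}_{M_n}$. Assumption (A3) yields $g_c^\star\leq \delta^{-1}$ for the marginal density of $X_c$, so for a fixed pair $(i,i')$ and coordinate $c$ the probability of sharing a bin is $\sum_m\big(\int_{I_m} g_c^\star\big)^2\leq\delta^{-2}\sum_m|I_m|^2\leq\delta^{-2}M_n(\max_m|I_m|)^2$. A union bound over the three coordinates and the $\binom{n}{2}$ pairs gives $\PP^\star(A_n^c)\leq \tfrac{3}{2}\delta^{-2}n^2M_n(\max_m|I_m|)^2\to 0$ along $(n_\ell)$.

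Second, on $A_n$ I claim that $\widehat\theta_{M_n}=\underline\theta_n$ up to label switching. This is the deterministic content underlying Proposition~\ref{prop:limit_thetaM}: because the bins occupied by distinct observations are disjoint, only the variables $u_{i,j,c}:=\omega_{j,c,m_{i,c}}$ enter the likelihood, subject to $\sum_i u_{i,j,c}\leq 1$; the maximizer is a hard assignment that places each observation in a single cluster, setting $u_{i,j(i),c}=1/n_j$ and $\theta_j=n_j/n$, where $n_j=|\{i:j(i)=j\}|$. The resulting log-likelihood reduces (modulo terms independent of the cluster sizes) to $-2\sum_j n_j\log n_j$, which by convexity of $x\mapsto x\log x$ is maximized at the balanced assignment $(n_j)\in\{\lfloor n/k\rfloor,\lceil n/k\rceil\}^k$, i.e.\ at $\underline\theta_n$.

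Combining the two steps: with $\PP^\star$-probability tending to one along $(n_\ell)$, $\widehat\theta_{M_{n_\ell}}=\underline\theta_{n_\ell}$ up to label switching, and $\underline\theta_n\to(1/k,\ldots,1/k)$; the desired contradiction follows. The second claim is then immediate: under \eqref{hyp:bornes_sur_taille_segments} one has $\max_m|I_m|\leq C_2/M_n$, so $C\leq n^2M_n(\max_m|I_m|)^2\leq C_2^2 n^2/M_n$, yielding $M_n\leq(C_2^2/C)n^2$ for $n\geq N$. The main obstacle is the second step: ruling out that non-hard (``soft'') configurations of $(u_{i,j,c})$ could yield a strictly larger likelihood on $A_n$. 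This requires a careful convex-analytic argument, which is exactly what the proof of Proposition~\ref{prop:limit_thetaM} in Section~\ref{sec:proofProp4} provides.
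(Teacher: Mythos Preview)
Your proof is correct and follows essentially the same approach as the paper: contradiction via a subsequence, a union bound showing the no-collision event has probability tending to one, and invoking Proposition~\ref{prop:limit_thetaM} to identify the MLE on that event with $\underline\theta_n$. The only notable difference is that your no-collision event is weaker (per-coordinate distinctness of bins rather than distinctness across all $3n$ values $X_{i,c}$), which is indeed sufficient since the $\omega_{j,c,\cdot}$ parameters are constrained separately for each $c$; this makes your union bound slightly tighter, but the structure of the argument is the same.
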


Note that Assumption \eqref{hyp:bornes_sur_taille_segments} holds as soon as the partition is regular, and in particular for the dyadic regular partitions
which forms an embedded sequence of partitions where  ${\cal M}= \{2^p, p \in \mathbb{N}^\star \}$ and for all $M\in {\cal M}$ $I_m=[(m-1)/M,m/M)$ for all $m < M$, $I_M=[(M-1)/M,1]$.

\subsection{Criterion for model selection}\label{se:criterion_model_selection}

In this section, we propose a criterion to choose the partition when $n$ is fixed. This criterion can be used to choose the size $M$ of a family of partitions but also to choose between two families of partition.
For each dataset, we can compute the MLE or the posterior mean or other Bayesian estimators under model \eqref{eq:model2} with partition $\cal I$. We thus shall index all our  estimators by $\cal I$. Note that the results of this section are valid for any family of estimators $(\tilde{\theta}_{\cal I})$ and not only for the MLE $(\widehat{\theta}_{\cal I})$. But we illustrate our results using the MLE.
\\

Proposition \ref{prop:limit_thetaM} and Corollary \ref{co:bound_for M_n} show the necessity to choose an appropriate partition among a collection of partitions ${\cal I}_M$, $M \in {\cal M}$. To choose the partition we need a criterion. Since the aim is to get efficient estimators, we choose the quadratic risk as the criterion to minimize. We thus want to minimize over all possible partitions
\begin{equation}
R_n({\cal I})=\E^{\star} \left[\|\tilde{\theta}_{\cal I}(X_{1:n})-\theta^{\star}\|_{\mathcal{T}_k}^{2} \right],
\end{equation}
where $X_{1:n}=(X_i)_{i\leq n}$
and for all $\theta$, $\tilde{\theta}\in \Theta$,
\begin{equation}
\lVert\theta - \tilde{\theta} \rVert_{\mathcal{T}_k}
=\min_{\sigma \in \mathcal{T}_k} \lVert \prescript{\sigma}{}{\theta} - \tilde{\theta} \rVert_2 .
 \end{equation}
As usual, this criterion cannot be computed in practice (since we do not know $\theta^\star$) and we need for each partition $\cal I$ some estimator $C(\cal I)$ of $R_n({\cal I})$.

We want to emphasize here that the choice of the criterion for this problem is not easy. 
Indeed, the  quadratic risk $R_n({\cal I})$ cannot be written as the expectation of an excess loss expressed thanks to a contrast function, i.e. in the form $\E^{\star}\left[\E^{\star} \left[ \gamma(\tilde{\theta}(X_{1:n}),X) - \gamma(\theta^\star,X) | X_{1:n}\right]\right]$, where $\gamma: ~ \Theta \times \mathcal{X} \to [0,+\infty) $. Yet, the latter is the framework of most theoretical results in model selection, see \cite{MR2602303} or \cite{massart03} for instance.
Moreover  decomposing the quadratic risk  as an approximation error plus an estimation error as explained in \cite{MR2602303}:
\[
R_n({\cal I})= 
\underbrace{
\inf_{\theta \in \Theta_{\cal I}} \left\|  \theta - \theta^\star \right\|_{\mathcal{T}_k}^2
}_{\text{approximation error}}
 + ~
  \underbrace{ 
 R_n({\cal I}) - \inf_{\theta \in \Theta_{\cal I} } \left\|  \theta - \theta^\star \right\|_{\mathcal{T}_k}^2
  }_{\text{estimation error}}
, \quad \text{ where } \Theta_{\cal I}=\Theta,\]
we see that the approximation error is always zero in our model (and not decreasing as often  when the complexity of the models increases). Hence 
 \begin{align}\label{eq:risk_bias_var}
R_n({\cal I}_M) &=
 \underbrace{Var^{\star} \left[\tilde{\theta}_{{\cal I}_M}(X_{1:n})\right]}_{\text{variance}}
+ 
\underbrace{\left\|\E^{\star} \left[\tilde{\theta}_{{\cal I}_M}(X_{1:n})\right]-\theta^{\star}\right\|_{\mathcal{T}_k}^{2}}_{\text{bias}}
\end{align}
where $Var^{\star}(.)$ is to be understood as the trace of the variance matrix. Here the bias is only an estimation bias and not a model mispecification bias. 

In the case of the  MLE, using Theorem \ref{th:FreeLunch}, for all fixed $M$ (large enough), the regularity of the mixture of these multivariate distributions implies that the bias is $O(1/n) $ and the variance converges to the inverse Fisher information matrix so that 
$$R_n({\cal I}_M) = Var^{\star} \left[
\widehat{\theta}_{{\cal I}_M}(X_{1:n})\right] + O(1/n) = tr\left( \tilde J_M^{-1}\right) + O(1/n) $$
and if $M \geq M_\epsilon$ so that $\|\tilde J_{M_\epsilon}^{-1} -  \tilde J^{-1} \| \leq \epsilon$ we obtain that for $n$ large enough
$$\mbox{tr}\left( \tilde J^{-1}\right) - 2\epsilon \leq R_n({\cal I}_M) \leq \mbox{tr}\left( \tilde  J^{-1}\right) + 2\epsilon.$$
Minimizing $R_n({\cal I}_M)$ therefore corresponds to choosing $M$ such that $\tilde J_M$ is close enough to $\tilde J$ (i.e. $M$ large enough)  while not deteriorating too much the approximation of $R_n({\cal I}_M)$ by $\mbox{tr}(\tilde J_{M}^{-1})$ (i.e. $M$ not too large).

Because the approximation error is always zero we cannot apply the usual methods and we   use instead a variant of the cross-validation technique.

Consider a partition of $\{1, \cdots, n \}$ in the form $(B_b, B_{-b}, b \leq b_n )$, in other words the partition is made of $2 \times b_n$ subsets of $ \{1, \cdots , n\}$. By definition  $B_{b_1} \cap B_{-b_2} =\emptyset$ for all $b_1, b_2 \leq b_n$. Because an arbitrary estimator, e.g. the MLE, based on any finite sample size is not unbiased, 
the following naive estimator of the risk is not appropriate:
$$
C_{CV1}({\cal I})=\frac{1}{2b_n}\sum_{b=1}^{b_n}\|\tilde{\theta}_{{\cal I}}(X_{B_b})-\tilde{\theta}_{{\cal I}}(X_{B_{-b}})\|_{\mathcal{T}_k}^{2}. 
$$
This can be seen by
decomposing the risk $R_n({\cal I})$  as in Equation \eqref{eq:risk_bias_var}
 and by computing the expectation of $C_{CV1}({\cal I})$ in  the case where the sizes of $B_b$, $B_{-b}$, $b \leq b_n$, are all equal,
\[
\E^{\star} \left[
C_{CV1}({\cal I})
\right]
=
Var^{\star} \left[ \tilde{\theta}_{\cal I}(X_{B_b}) \right].
\]
Then, the criterion $C_{CV1}({\cal I})$ do not capture the bias of the estimator $\tilde{\theta}_{\cal I}$. 

In the case of the MLE,
  using Proposition \ref{prop:limit_thetaM}, $C_{CV1}({\cal I})$ is tending to $0$ when $\max_m |I_m|$ tends to $0$. So that minimizing this criterion leads to choosing a partition $\widehat{{\cal I}}_n \in \argmin_{\cal I} C_{CV1}({\cal I})$ which has a large number of sets and so $\widehat{\theta}_{\widehat{{\cal I}}_n}(X_{1:n}) $ may be close to  $(1/k, \dots, 1/k)$ and then may not even be consistent. 
As an illustration, see  Figure \ref{fig:risk_biais_var} where  
$R_n({\cal I})$, 
$Var^{\star} \left[\widehat{\theta}_{\cal I}(X_{1:n})\right]$ and 
$\left\|\E^{\star} \left[\widehat{\theta}_{\cal I}(X_{1:n})\right]-\theta^{\star}\right\|_{\mathcal{T}_k}^{2}$ 
 are plotted as a function of $M$, for three simulation sets and various values of the sample size $n$, see Section \ref{sec:simu} for more details. It is quite clear from these plots that the variances remain either almost constant with $M$ or tend to decrease, while the bias  increases with $M$ and becomes dominant as $M$ becomes larger. As a result $R_n({\cal I})$ tends to first decrease and then increase as $M$ increases. 
 
 To address the bad behaviour of $C_{CV1}({\cal I})$, we use an idea of \cite{VdLaan}.  Choose a fixed base partition ${\cal I}_{0}$ with a small number of bins (although large enough to allow for identifiability). Then compute 
$$
C_{CV}({\cal I})=\frac{1}{ b_n}\sum_{b=1}^{b_n}\|\tilde{\theta}_{{\cal I}}(X_{B_b})-\tilde{\theta}_{{\cal I}_0} (X_{B_{-b}})\|_{\mathcal{T}_k}^{2}. 
$$
Ideally we would like to use a perfectly unbiased  estimator $\tilde{\theta}$ in the place of $\tilde{\theta}_{{\cal I}_0} (X_{B_{-b}})$,
see Assumption \ref{hyp:theta0_unbiased} used in Theorem \ref{th:construction_Mn} and Proposition \ref{prop:alm_oracle_ineq}.   We discuss the choice of $\tilde{\theta}_{{\cal I}_0} (X_{B_{-b}})$ at the end of the section.

Figure \ref{fig:pattern_criteria} gives an idea of the behaviour of $C_{CV}(  \cdot ) $ and $C_{CV1}(  \cdot )$ using the MLE. It shows in particular that in our simulation study $C_{CV}(  \cdot )$ follows the same behaviour as $R_n(  \cdot )$, contrarywise to $C_{CV1}(  \cdot )$. More details are given in Section \ref{sec:simu}.

We now provide some theoretical results on the behaviour of the minimizer of $C_{CV}(  \cdot ) $ over a finite family of candidate partitions $\mathcal M_n$ compared to the minimizer of $R_{a_n}(\cdot )$ over the same family. 
Let $m_n = \#{\cal M}_n $ be the number of candidate partitions. 
We consider the following set of assumptions:
\Needspace{3\baselineskip}
\begin{assumption}[A5]\label{hyp:A5}
\leavevmode
\begin{enumerate}[label=(A5.\arabic*),leftmargin=1.5cm]
\item\label{hyp:size_samplin_set_ind} $B_b$, $B_{-b}$, $b\leq b_n$ are disjoint sets of equal size
\[\#B_b=\#B_{-b}=a_n, \text{~ for all } b\leq b_n
\]
\item \label{hyp:theta0_unbiased} $\tilde{\theta}_{{\cal I}_0}(X_{B_{-b}})$ is not biased i.e.
$\mathbb{E}^\star[ \tilde{\theta}_{{\cal I}_0}
{(X_{B_{-b}})}]=\theta^\star$.
\end{enumerate}
\end{assumption}
We obtain the following oracle  inequality.

\begin{theo}\label{th:construction_Mn}
Suppose Assumption (A5). For any sequences $0<\epsilon_n, \delta_n <1 $, with probability greater than 
\[1-2m_n \exp{\left(-2b_n \left(\epsilon_n \inf_{ {\cal I} \in {\cal M}_n } R_{a_n}({\cal I}) + \delta_n\right)^2\right)}
,\]
we have
\begin{equation}\label{eq:ineq_selection_model}
R_{a_n}(\widehat{{\cal I}}_n) \leq \frac{1+ \epsilon_n}{1- \epsilon_n} \inf_{ {\cal I} \in {\cal M}_n } R_{a_n}({\cal I}) + \frac{ 2 \delta_n }{ 1 -\epsilon_n },
\end{equation}
where $\widehat{{\cal I}}_n\in \argmin_{{\cal I} \in {\cal M}_n} C_{CV}({\cal I})$. 
\end{theo}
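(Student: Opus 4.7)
I will follow the classical oracle-inequality template for cross-validation. The plan is to compute $\E^\star[C_{CV}(\mathcal{I})]$ explicitly, show by Hoeffding and a union bound that $C_{CV}(\mathcal{I})$ concentrates around this mean uniformly over the finite family $\mathcal{M}_n$, and then turn the two-sided deviation bound into the claimed oracle inequality by comparing $C_{CV}(\widehat{\mathcal{I}}_n)$ with $C_{CV}(\mathcal{I}^\star)$ at an oracle partition $\mathcal{I}^\star \in \argmin_{\mathcal{M}_n} R_{a_n}$.

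\textbf{Expectation and concentration of $C_{CV}$.} Using $\|\theta - \tilde\theta\|_{\mathcal{T}_k}^2 = \|\prescript{o}{}{\theta} - \prescript{o}{}{\tilde\theta}\|_2^2$, each summand of $C_{CV}(\mathcal{I})$ expands as $\|A_b\|_2^2 + \|C_b\|_2^2 - 2 A_b^\top C_b$, with $A_b = \prescript{o}{}{\tilde\theta_{\mathcal{I}}}(X_{B_b}) - \prescript{o}{}{\theta^\star}$ and $C_b = \prescript{o}{}{\tilde\theta_{\mathcal{I}_0}}(X_{B_{-b}}) - \prescript{o}{}{\theta^\star}$. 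Assumption \ref{hyp:size_samplin_set_ind} makes $B_b$ and $B_{-b}$ disjoint, so $A_b$ and $C_b$ are independent, and \ref{hyp:theta0_unbiased}, read on the ordered estimator as $\E^\star[\prescript{o}{}{\tilde\theta_{\mathcal{I}_0}}(X_{B_{-b}})] = \prescript{o}{}{\theta^\star}$, kills the cross-term in expectation; since $|B_b| = |B_{-b}| = a_n$ and the $X_i$ are i.i.d., the two remaining terms sum to $R_{a_n}(\mathcal{I}) + R_{a_n}(\mathcal{I}_0)$, so $\E^\star[C_{CV}(\mathcal{I})] = R_{a_n}(\mathcal{I}) + R_{a_n}(\mathcal{I}_0)$. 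Pairwise disjointness of all $2 b_n$ blocks further implies that the $b_n$ summands defining $C_{CV}(\mathcal{I})$ are i.i.d.\ and bounded (the squared $\mathcal{T}_k$-distance between vectors of $\Theta$ is bounded by a constant, normalised to $1$ to match the exponent of the statement), so Hoeffding's inequality gives
$$\PP^\star\!\left(|C_{CV}(\mathcal{I}) - R_{a_n}(\mathcal{I}) - R_{a_n}(\mathcal{I}_0)| > t\right) \le 2 e^{-2 b_n t^2}.$$
Choosing $t = \epsilon_n \inf_{\mathcal{I} \in \mathcal{M}_n} R_{a_n}(\mathcal{I}) + \delta_n$ and taking a union bound over the $m_n$ candidates yields precisely the probability displayed in the theorem.

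\textbf{From concentration to the oracle bound.} On the complementary event, since $\inf_{\mathcal{M}_n} R_{a_n} \le R_{a_n}(\mathcal{I})$, one has $|C_{CV}(\mathcal{I}) - R_{a_n}(\mathcal{I}) - R_{a_n}(\mathcal{I}_0)| \le \epsilon_n R_{a_n}(\mathcal{I}) + \delta_n$ for every $\mathcal{I} \in \mathcal{M}_n$. Applying the lower bound at $\widehat{\mathcal{I}}_n$, the upper bound at an oracle $\mathcal{I}^\star \in \argmin_{\mathcal{M}_n} R_{a_n}$, and using $C_{CV}(\widehat{\mathcal{I}}_n) \le C_{CV}(\mathcal{I}^\star)$ by definition of $\widehat{\mathcal{I}}_n$, the additive $R_{a_n}(\mathcal{I}_0)$ terms cancel and one obtains $(1 - \epsilon_n) R_{a_n}(\widehat{\mathcal{I}}_n) \le (1 + \epsilon_n) R_{a_n}(\mathcal{I}^\star) + 2\delta_n$, which rearranges into \eqref{eq:ineq_selection_model}.

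\textbf{Main obstacle.} The delicate point is the expectation computation in the presence of label switching: the permutation realising $\min_\sigma \|\prescript{\sigma}{}{\theta} - \tilde\theta\|_2^2$ is random and depends on both arguments, so $\E^\star$ cannot be commuted with this $\min$ in the naive way. The trick of passing to the canonical ordering $\prescript{o}{}{\cdot}$ turns both $\prescript{o}{}{\tilde\theta_{\mathcal{I}}}(X_{B_b})$ and $\prescript{o}{}{\tilde\theta_{\mathcal{I}_0}}(X_{B_{-b}})$ into ordinary measurable functions of their respective block, after which independence and \ref{hyp:theta0_unbiased} (interpreted as unbiasedness of the ordered estimator, which is the only reading that makes the cross-term vanish) conclude. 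The remaining ingredients — Hoeffding, the union bound, and the algebraic manipulation turning two-sided deviations into a multiplicative oracle inequality — are routine.
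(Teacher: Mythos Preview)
Your proof is correct and follows essentially the same route as the paper: compute the expectation of $C_{CV}(\mathcal{I})$ using \ref{hyp:size_samplin_set_ind} and \ref{hyp:theta0_unbiased}, apply Hoeffding's inequality to the $b_n$ i.i.d.\ bounded summands, take a union bound over the $m_n$ candidates, and turn the two-sided deviation into the multiplicative oracle inequality. The only cosmetic differences are that the paper packages the last algebraic step as a citation of Arlot's Lemma~2.1 rather than writing it out, and that it does not make explicit the additive $R_{a_n}(\mathcal{I}_0)$ term in $\E^\star[C_{CV}(\mathcal{I})]$ (which, as you note, cancels when comparing $\widehat{\mathcal{I}}_n$ with the oracle); your handling of the label-switching issue via the ordered representative $\prescript{o}{}{\cdot}$ is also more explicit than the paper's, but the underlying argument is the same.
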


As a consequence of Theorem \ref{th:construction_Mn}, the following Proposition holds. Recall that $n = 2 b_n a_n$. 

\begin{prop}\label{prop:alm_oracle_ineq}
Assume (A5). If $b_n \gtrsim n^{2/3} \log^2(n)$, $a_n \lesssim n^{1/3}/(\log^2(n))$, and  $m_n\leq C_\alpha n^\alpha$, for some $C_\alpha >0$ and $\alpha \geq 0$, 
then
\[
\E^{\star} \left[ a_n R_{a_n}(\widehat{{\cal I}}_n)\right]\leq
\inf_{ {\cal I} \in {\cal M}_n } a_n R_{a_n}({\cal I}) + o(1 ),
\]
where $\widehat{{\cal I}}_n\in \argmin_{{\cal I} \in {\cal M}_n} C_{CV}({\cal I})$. 
\end{prop}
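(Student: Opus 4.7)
The plan is to apply Theorem~\ref{th:construction_Mn} with carefully tuned sequences $(\epsilon_n,\delta_n)$ and then split the expectation $\E^\star[a_n R_{a_n}(\widehat{\mathcal I}_n)]$ over the high-probability ``oracle'' event and its complement. Three requirements must be met simultaneously: (i) the multiplicative distortion $(1+\epsilon_n)/(1-\epsilon_n) \to 1$ fast enough that, multiplied by $a_n \inf_{\mathcal I} R_{a_n}(\mathcal I)$, it contributes only $o(1)$; (ii) the additive term $2 a_n \delta_n/(1-\epsilon_n)$ is $o(1)$; and (iii) the exception probability times $a_n$ is $o(1)$ (to absorb the contribution of the bad event via the crude bound $R_{a_n} \leq \mathrm{diam}(\Theta)^2 \leq k$, which holds because $\Theta$ lies in the simplex).

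Concretely I would take $\epsilon_n = 1/\log n$ and $\delta_n = 1/(a_n \log n)$. Denote by $E_n$ the event on which \eqref{eq:ineq_selection_model} holds. Theorem~\ref{th:construction_Mn} gives
\[
\PP^\star(E_n^c) \leq 2m_n \exp\!\left(-2 b_n \delta_n^2\right) \leq 2 C_\alpha n^\alpha \exp\!\left(-\frac{2 b_n}{a_n^2 \log^2 n}\right),
\]
and the hypotheses $b_n \gtrsim n^{2/3}\log^2 n$, $a_n \lesssim n^{1/3}/\log^2 n$ make $b_n/(a_n^2 \log^2 n) \gtrsim \log^4 n$, so $\PP^\star(E_n^c)$ is super-polynomially small; in particular $a_n \PP^\star(E_n^c) = o(1)$. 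Splitting
\[
\E^\star[a_n R_{a_n}(\widehat{\mathcal I}_n)] = \E^\star\!\left[a_n R_{a_n}(\widehat{\mathcal I}_n)\1_{E_n}\right] + \E^\star\!\left[a_n R_{a_n}(\widehat{\mathcal I}_n)\1_{E_n^c}\right],
\]
the second term is bounded by $k\, a_n \PP^\star(E_n^c) = o(1)$. On $E_n$, the oracle inequality rewrites as
\[
a_n R_{a_n}(\widehat{\mathcal I}_n) \leq a_n \inf_{\mathcal I \in \mathcal M_n} R_{a_n}(\mathcal I) + \frac{2\epsilon_n}{1-\epsilon_n}\, a_n \inf_{\mathcal I \in \mathcal M_n} R_{a_n}(\mathcal I) + \frac{2 a_n \delta_n}{1-\epsilon_n},
\]
and the last summand equals $O(1/\log n) = o(1)$.

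The remaining piece---and what I expect to be the main obstacle---is controlling $\epsilon_n \cdot a_n \inf_{\mathcal I \in \mathcal M_n} R_{a_n}(\mathcal I)$. For this it suffices that $a_n \inf R_{a_n}(\mathcal I) = O(1)$: by Proposition~\ref{prop:fisher} there exists $\mathcal I_0 \in \mathcal M_n$ (for $n$ large) with $\tilde J_{\mathcal I_0}$ non-singular, and parametric efficiency of the m.l.e. in model~\eqref{eq:model2} (Equation~\eqref{eq:effiM}) yields $a_n R_{a_n}(\mathcal I_0) \to \operatorname{tr}(\tilde J_{\mathcal I_0}^{-1})$ in distribution. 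Promoting this to a bound on second moments is the delicate step; one combines the boundedness of $\Theta$ (which provides a deterministic envelope) with standard uniform-integrability arguments for the rescaled m.l.e. $\sqrt{a_n}(\widehat{\theta}_{\mathcal I_0} - \theta^\star)$ to conclude $a_n R_{a_n}(\mathcal I_0) = O(1)$, hence $a_n \inf_{\mathcal I \in \mathcal M_n} R_{a_n}(\mathcal I) = O(1)$. Once this is in hand, $\frac{2\epsilon_n}{1-\epsilon_n} \cdot O(1) = o(1)$, and combining the contributions from $E_n$ and $E_n^c$ gives the announced inequality.
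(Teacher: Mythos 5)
Your overall architecture is the same as the paper's: apply Theorem~\ref{th:construction_Mn}, split $\E^{\star}[a_n R_{a_n}(\widehat{{\cal I}}_n)]$ over the oracle event and its complement, bound the risk crudely on the bad event, and tune $(\epsilon_n,\delta_n)$; your $\delta_n=1/(a_n\log n)$ and the resulting estimate $b_n\delta_n^2\gtrsim \log^4 n$ also coincide with the paper's computation. The gap is created by your choice $\epsilon_n=1/\log n$, which forces you to prove $a_n\inf_{{\cal I}\in{\cal M}_n}R_{a_n}({\cal I})=O(1)$, and the argument you sketch for this does not go through as stated. Convergence in distribution of $\sqrt{a_n}\,(\widehat{\theta}_{{\cal I}_0}-\theta^{\star})$ gives no control of $a_n\E^{\star}\big[\|\widehat{\theta}_{{\cal I}_0}-\theta^{\star}\|_{{\cal T}_k}^{2}\big]$: the deterministic envelope $a_n\,\mathrm{diam}(\Theta)^2$ diverges, so uniform integrability of the rescaled squared error is precisely what must be established, and that requires nonasymptotic deviation or moment bounds for the m.l.e. in the multinomial mixture model which you do not supply (it is not a ``standard'' consequence of \eqref{eq:effiM}). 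Moreover this route silently narrows the statement: the proposition, like Theorem~\ref{th:construction_Mn}, is asserted for an arbitrary family of estimators $(\tilde{\theta}_{{\cal I}})$ satisfying (A5), not only the m.l.e., and nothing guarantees that the finite family ${\cal M}_n$ contains a partition fine enough for Proposition~\ref{prop:fisher} to apply, so ``there exists ${\cal I}_0\in{\cal M}_n$ with $\tilde{J}_{{\cal I}_0}$ non singular'' is not justified either.

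The obstacle is self-inflicted and the paper's proof avoids it entirely: take $\epsilon_n=\delta_n=1/(a_n\log n)$. Then $\epsilon_n a_n=1/\log n$, so
\[
a_n\,\frac{1+\epsilon_n}{1-\epsilon_n}\,\inf_{{\cal I}\in{\cal M}_n}R_{a_n}({\cal I})
\;=\;
a_n\inf_{{\cal I}\in{\cal M}_n}R_{a_n}({\cal I})
\;+\;\frac{2}{(1-\epsilon_n)\log n}\,\inf_{{\cal I}\in{\cal M}_n}R_{a_n}({\cal I})
\;\leq\;
a_n\inf_{{\cal I}\in{\cal M}_n}R_{a_n}({\cal I})+o(1),
\]
using only the trivial bound $R_{a_n}({\cal I})\leq 1$ (the same bound on the squared ${\cal T}_k$-distance already used in the Hoeffding step of Theorem~\ref{th:construction_Mn}), while the additive term is $2a_n\delta_n/(1-\epsilon_n)=o(1)$ and the exception term $2a_nm_n\exp(-2b_n\delta_n^2)$ remains super-polynomially small exactly as in your computation. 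With this choice no bound on $a_n\inf_{{\cal I}}R_{a_n}({\cal I})$ is needed anywhere, and the proof is valid for any estimator family under (A5).
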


Note that for each ${\cal I}$, $R_{a_n}({\cal I})$ is of order of magnitude $1/a_n$ so that the main term in the upper bound of Proposition \ref{prop:alm_oracle_ineq} is 
$\inf_{ {\cal I} \in {\cal M}_n } a_n R_{a_n}({\cal I})$. Note also that this is an exact oracle inequality (with constant $1$).

In Theorem \ref{th:construction_Mn} and Proposition \ref{prop:alm_oracle_ineq}, $\widehat{{\cal I}}_n$ is built on $n$ observations while the risk is associated with $a_n<n$ observations. This leads to a conservative choice of $\widehat{{\cal I}}_n$, i.e. we may choose a sequence $\widehat{{\cal I}}_n$ (optimal with $a_n$ observations) increasing more slowly than the optimal one (with $n$ observation). We think however that this conservative choice should not change the  good  behaviour of $\widehat{\theta}_{\widehat{{\cal I}}_n}$, since Theorem \ref{th:FreeLunch} implies that any sequence of partitions which grows  slowly enough to infinity leads to an efficient estimator. Hence, once the sequence $M_n$ growing to infinity is chosen, then any other sequence growing to infinity more slowly also leads to an efficient estimator.\\

In Proposition \ref{prop:alm_oracle_ineq} and Theorem \ref{th:construction_Mn}, the reference point estimate $\tilde{\theta}_{{\cal I}_0} (X_{B_{-b}})$ is assumed to be unbiased. This is a strong assumption, which is not exactly satisfied in our simulation study. To consider a reasonable approximation of it, $\tilde{\theta}_{{\cal I}_0} (X_{B_{-b}})$ is chosen as the MLE associated with a partition with a small number of bins.
 Recall that  the maximum likelihood estimator  is asymptotically unbiased and for a fixed $M$, the   bias of the MLE  for the whole parameter $\theta, \omega$ is of order $1/n$. The heuristic  is that a small number of bins implies a smaller number of parameters to estimate, so that the asymptotic regime is attained faster. 
Our simulations confirm this heuristic, see Section \ref{sec:simu}. 

To take a small number of bins but large enough to get identifiability, we observe in Section \ref{sse: choiceM0} a great heterogeneity among different estimators and also that some estimators have null components or cannot be computed,  when the number of bins is too small.

\section{Simulation study}
\label{sec:simu}

\subsection{ On the estimation of the risk and the selection of $M$ } 
In this section, we illustrate the results obtained in Sections \ref{se:reasons_model_selection} and \ref{se:criterion_model_selection} with simulations.
 We compare six criteria for the model selection based on $C_{CV}$ with different choices of size of training and testing sets.
We choose the regular embedded dyadic partitions, i.e. when ${\cal M}= \{2^p, p \in \mathbb{N}^* \}$ and for all $M\in {\cal M}$, $I_m=[(m-1)/M,m/M)$ for all $m < M$, $I_M=[(M-1)/M,1]$. Following Corollary \ref{co:bound_for M_n}, when $n$ is fixed, we only consider $M=2^P\leq M_n= n^3$ (i.e. $ P \leq P_n:= \lfloor 3/2 \log(n) \rfloor$).
In this part, we only consider MLE estimators with ordered components and approximated thanks to the EM algorithm.

 For $n$ fixed, the choice of the model, through $P$, is done using the criterion $C_{CV}$ based on two types of choice for $(B_b), (B_{-b})$. First, we use the framework under which we were able to prove something, i.e. Assumption \ref{hyp:size_samplin_set_ind} where all the training and testing sets are disjoints. In this context we use different sizes $a_n$ and $b_n$:
\begin{itemize}
\item $ b_n= \lceil n^{2/3}log(n)/(20)\rceil$ and $a_n=\lfloor n/(2b_n) \rfloor$ (Assumption of Proposition \ref{prop:alm_oracle_ineq}, up to $\log(n)$), leading to the criterion $C_{CV}^{D,1}$ and the choice of $P$ noted $\widehat{P}_n^{D,1} \in \argmin_{P\leq P_n} C_{CV}^{D,1}(\mathcal{I}_{2^P})$, 
\item $b_n=\lceil n^{1/3} \rceil$, $a_n=\lfloor n/(2b_n) \rfloor$,  leading to the criterion $C_{CV}^{D,2}$ and the choice of $P$ noted $\widehat{P}_n^{D,2} \in \argmin_{P\leq P_n} C_{CV}^{D,2}(\mathcal{I}_{2^P})$,
\item $a_n=\lfloor n/10 \rfloor $, $b_n = \lfloor n/(2a_n) \rfloor$, leading to  the criterion $C_{CV}^{D,3}$ and the choice of $P$ noted $\widehat{P}_n^{D,3} \in \argmin_{P\leq P_n} C_{CV}^{D,3}(\mathcal{I}_{2^P})$
\end{itemize}
We also consider the famous V-fold, where the dataset is cut into $b_n$ disjoint sets $\tilde{B}_b$ of size $a_n$, leading to training sets $B_b=\tilde{B}_b$ and testing sets $B_{-b}=\{1,\dots n\} \setminus \tilde{B}_b$. We also use different sizes $a_n$ and $b_n$:
\begin{itemize}
\item $a_n=\lfloor n^{1/3} \rfloor$, $b_n=\lfloor n/a_n \rfloor$, leading to  the criterion $C_{CV}^{V,1}$ and the choice of $P$ noted $\widehat{P}_n^{V,1} \in \argmin_{P\leq P_n} C_{CV}^{V,1}(\mathcal{I}_{2^P})$,
\item $a_n=\lfloor n^{2/3} /2\rfloor$, $b_n=\lfloor n/a_n \rfloor$, leading to the criterion $C_{CV}^{V,2}$ and the choice of $P$ noted $\widehat{P}_n^{V,2} \in \argmin_{P\leq P_n} C_{CV}^{V,2}(\mathcal{I}_{2^P})$,
\item $a_n=\lfloor n/10 \rfloor$, $b_n=\lfloor n/a_n \rfloor$, leading to  the criterion $C_{CV}^{V,3}$ and the choice of $P$ noted $\widehat{P}_n^{V,3} \in \argmin_{P\leq P_n} C_{CV}^{V,3}(\mathcal{I}_{2^P})$
.
\end{itemize}
Note that for criteria
\begin{itemize}
 \item $C_{CV}^{j,1}$,  $j\in \{D,V\}$, $a_n$ is proportional to $n^{1/3}$ up to a logarithm term,
 \item $C_{CV}^{j,2}$,  $j\in \{D,V\}$, $a_n$ is proportional to $n^{2/3}$,
 \item $C_{CV}^{j,3}$,  $j\in \{D,V\}$, $a_n$ is proportional to $n$.
\end{itemize}

We now explain how we choose $\mathcal{I}_0$. As explained earlier 
$M_0$ has to be taken small, but not too small since otherwise the model would not be identifiable. We propose to choose the smallest $M_0=2^{P_0}$ such that $M_0 \geq k+2 $ (equivalently $P_0 \geq \log(k+2)/ \log(2)$). This lower bound ensures that generically on ${\cal I}_0$ the model \eqref{eq:model2} is identifiable.

We consider three different simulation settings. In each one of them we consider the conditionally repeated sampling model, i.e.  $f_{j,1}=f_{j,2}=f_{j,3}$, both for the true distribution and for the model.
In the three cases, $k=2$ and the other parameters are given in Table \ref{ta:parameters_simulations}. So that, we work with  $P_0=2$ and $M_0=2^2=4$.

\begin{table}[ht]
\begin{tabular}{l||l|l|l|l|}
 Simu. & $k$ & $\theta^\star$ & $f^\star_{1,1}dx=f^\star_{1,2}dx=f^\star_{1,3}dx$ &  $f^\star_{2,1}dx=f^\star_{2,2}dx=f^\star_{2,3}dx$ \\
 \hline
 \hline
  $1$ &  $2$ & $(0.3,0.7)$ & $\mathcal{N}(4/5, 0.07^2)$ truncated to $[0,1]$ & $\mathcal{N}(1/3, 0.1^2)$ truncated to $[0,1]$\\
 \hline
  $2$ &  $2$  & $(0.2,0.8)$  & $\mathcal{U}((0,1))$  & $\mathcal{N}(2/3, 0.05^2) $ truncated to $[0,1]$  \\
 \hline
 $3$ &   $2$  & $(0.3,0.7)$  & $\beta(1,2)$ & $\beta(5,3)$ \\
 \hline
\end{tabular}
\caption{Values of the true parameters for simulation $1$ to $3$}
\label{ta:parameters_simulations}
\end{table}

\noindent The different emission distributions are represented in Figure \ref{fig:emission_distrib}.

\begin{figure}[ht]
\centering
    \subfigure[Simulation $1$]
        {\includegraphics[height=1.5in]{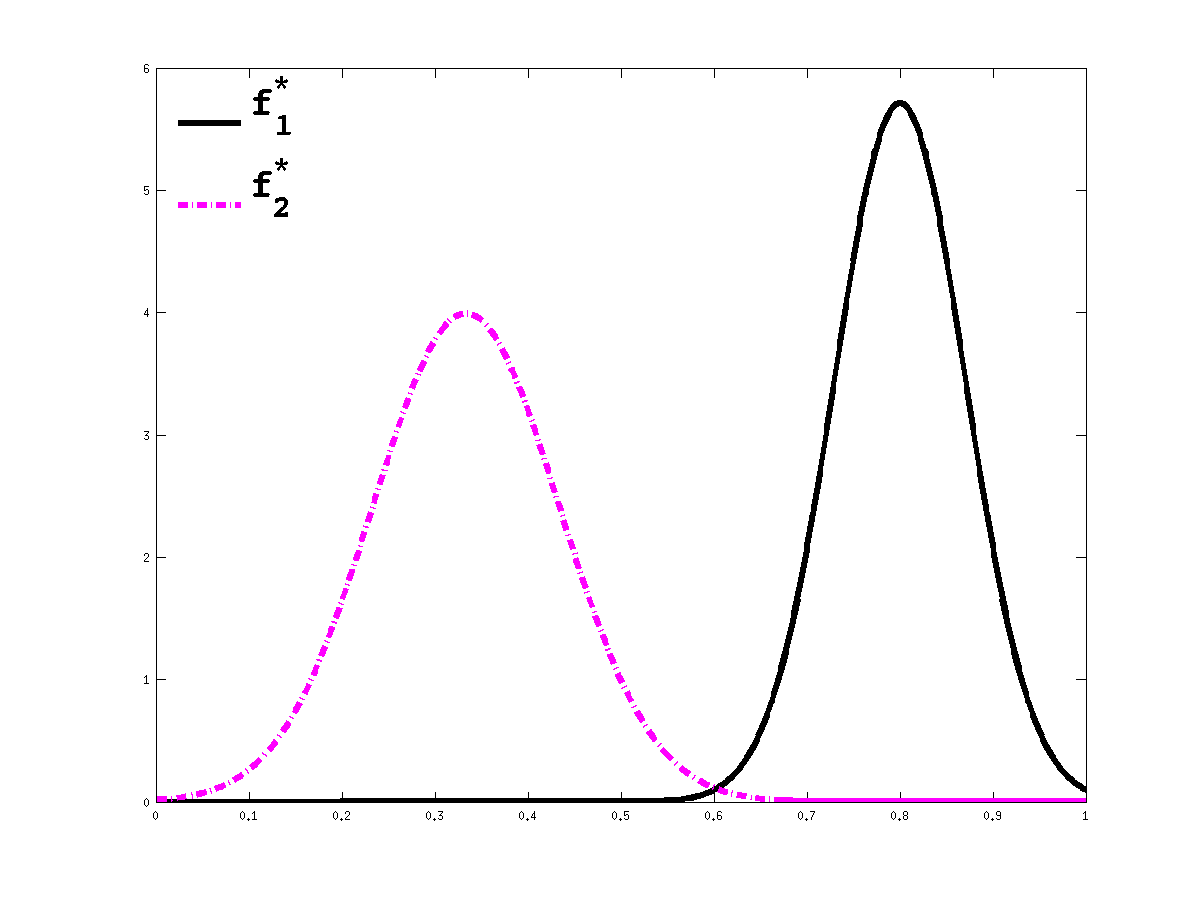}}  
    \subfigure[Simulation $2$]
    {
        \includegraphics[height=1.5in]{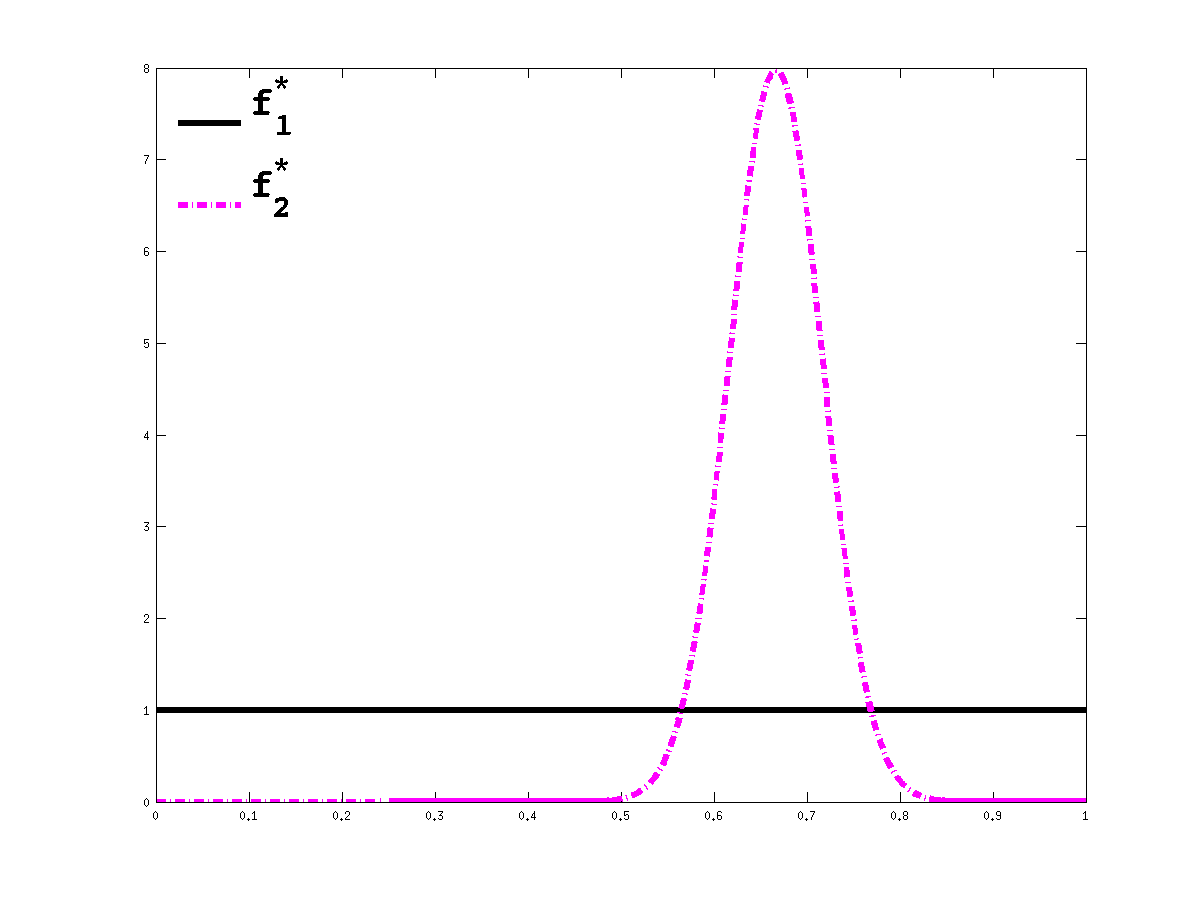}
      }
    \subfigure[Simulation $3$]
    {
        \includegraphics[height=1.5in]{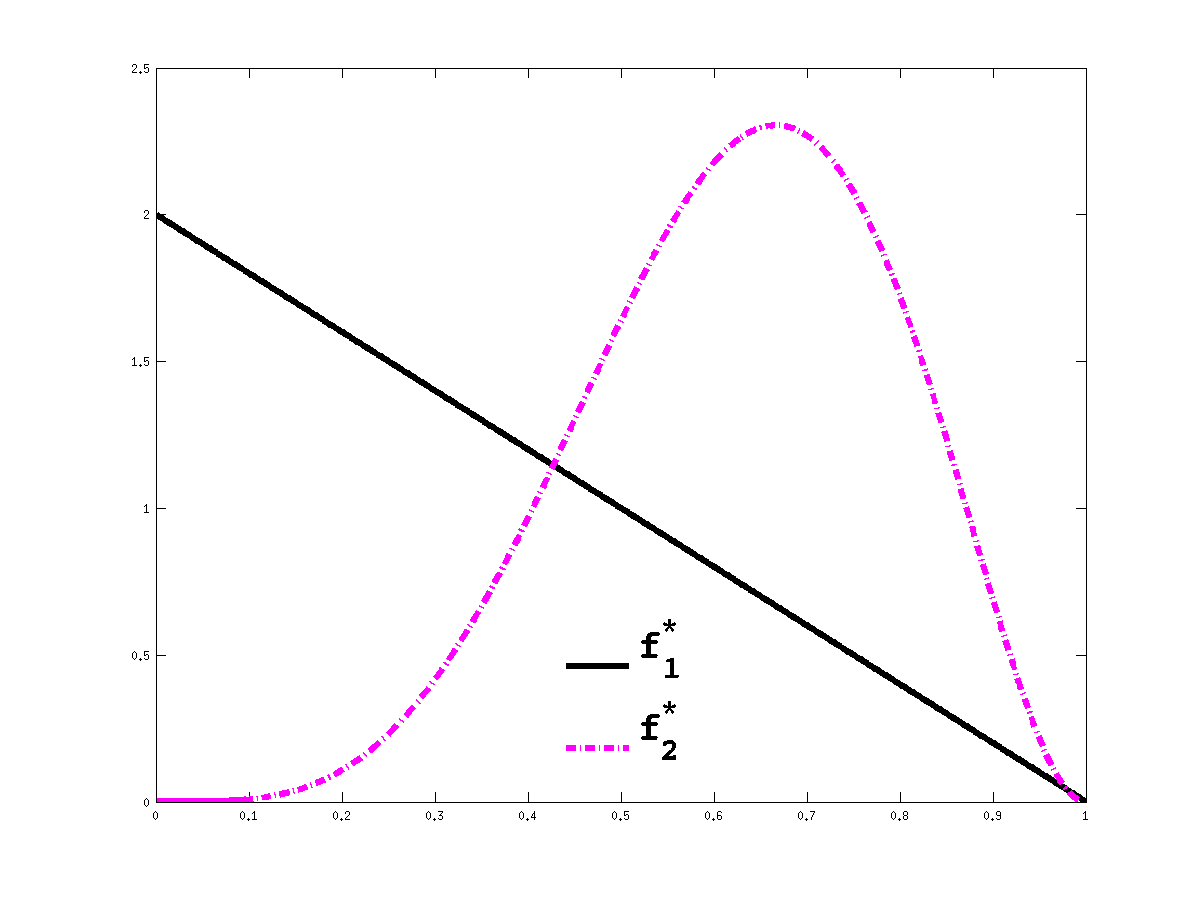}
      }
    
    \caption{Representation of the true emission distributions for simulations $1$, $2$ and $3$.}
    \label{fig:emission_distrib}
\end{figure}

In Figure \ref{fig:risk_biais_var} we display the evolution of the risk 
$R_n(\mathcal{I}_{2^P})$, the variance $Var^\star\left[\widehat{\theta}_{2^P}(X_{1:n})  \right]$  and  the squared bias 
 $\left\| \mathbb{E}^\star \left[  \widehat{\theta}_{2^P}(X_{1:n}) \right] - \theta^\star \right\|_{\mathcal{T}_k}^2$
    defined in Equation \eqref{eq:risk_bias_var} as the number of bins  $2^P$ increases, for different values of $n$ and for each of the three  true distributions. The risks, bias and variances are estimated by Monte Carlo, based on  $1000$ repeated samples and for each of them we compute the MLE using the EM algorithm. We notice that typically the bias first is either constant or slightly decreasing as $P$ increases and then increases rapidly for larger values of $P$ until it stabilizes  to the value $\|\underline{\theta}_n - \theta^\star \|_{\mathcal{T}_k}$, which is what was proved in  Proposition \ref{prop:limit_thetaM}. On the other hand  the variance is monotone non increasing as $P$ increases until $P$ becomes quite large and then it  decreases to zero (which also is a consequence of Proposition \ref{prop:limit_thetaM}) when $P$ gets large.
   As a result the risk, which is the sum of the squared bias and the variance, is typically constant or decreasing for small increasing values of $P$ and then increasing to $\| \underline{\theta}_n- {\theta^\star} \|^{2}_{\mathcal{T}_k}$ when $P$ gets large.

\begin{figure}[!ht]
\centering
    \subfigure[Simulation $1$, $n=100$]
        {\includegraphics[height=1.5in]{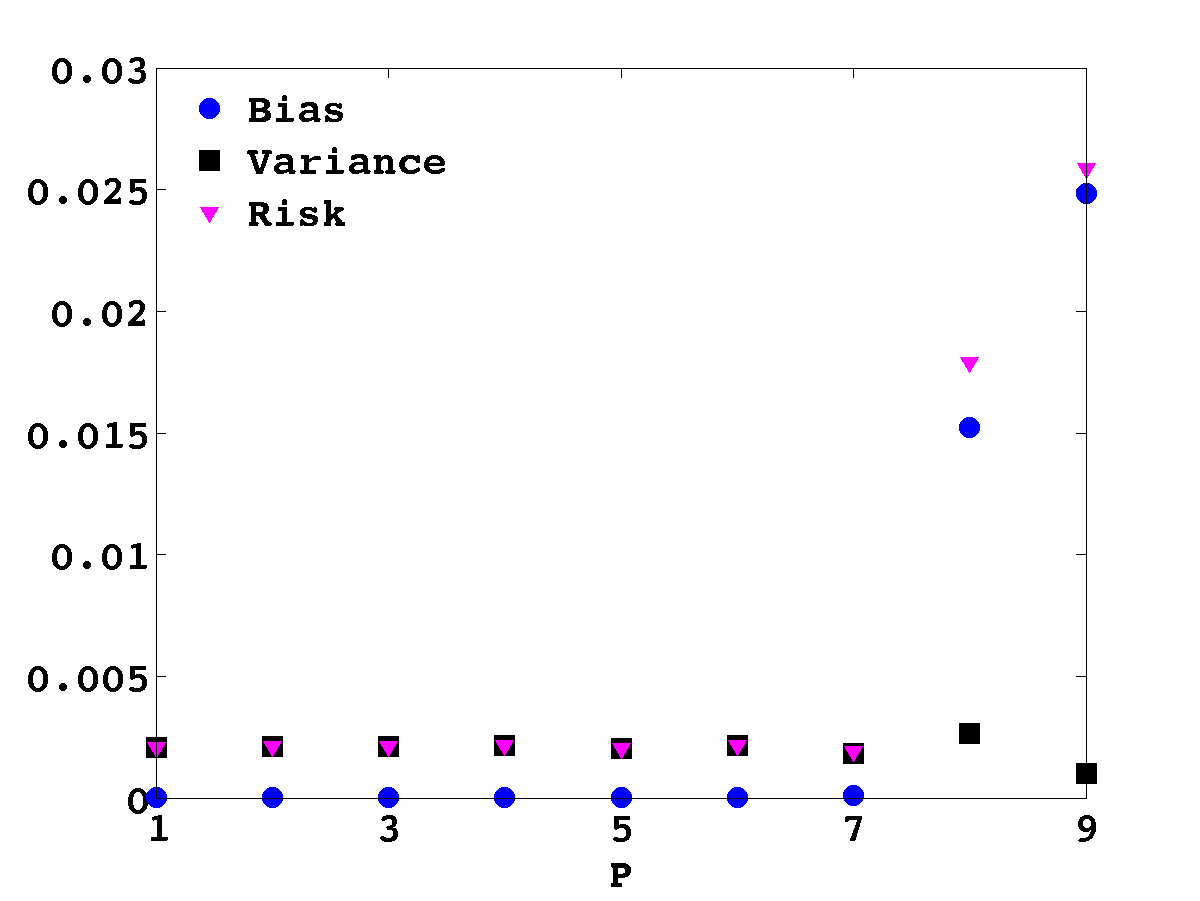}}   
         ~ 
    \subfigure[Simulation $3$, $n=50$]
    {
        \includegraphics[height=1.5in]{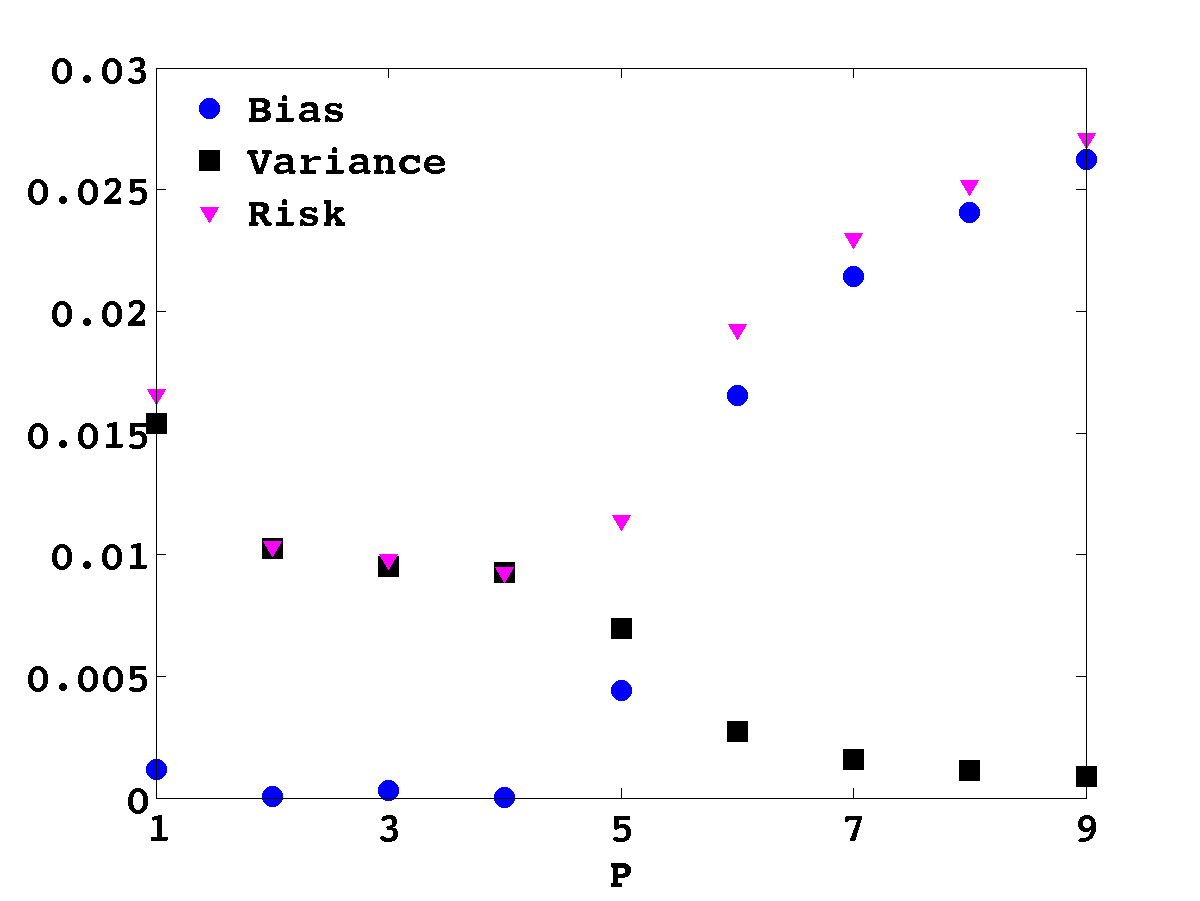}
      }
    ~ 
    \subfigure[Simulation $2$, $n=50$]
    {
        \includegraphics[height=1.5in]{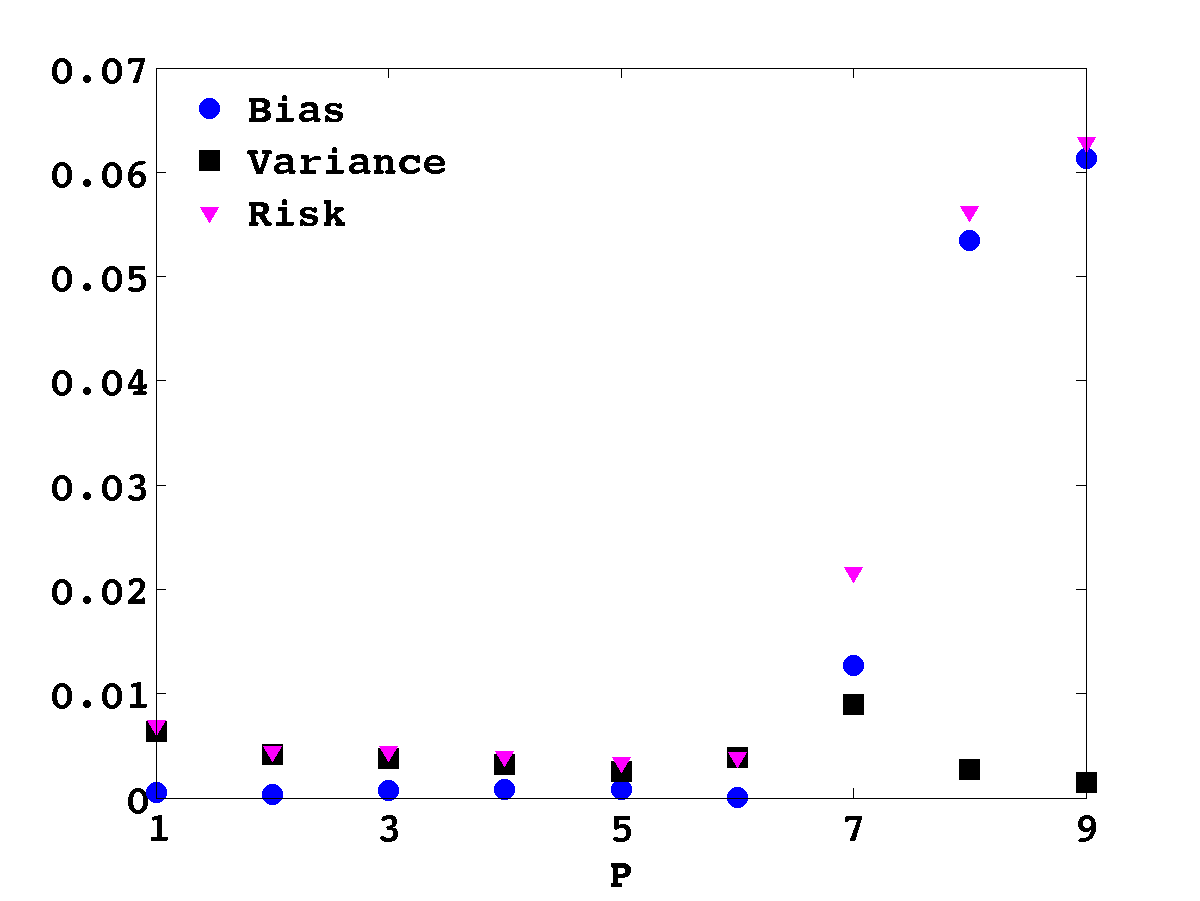}
      }
       ~ 
    \subfigure[Simulation $3$, $n=100$]
    {
        \includegraphics[height=1.5in]{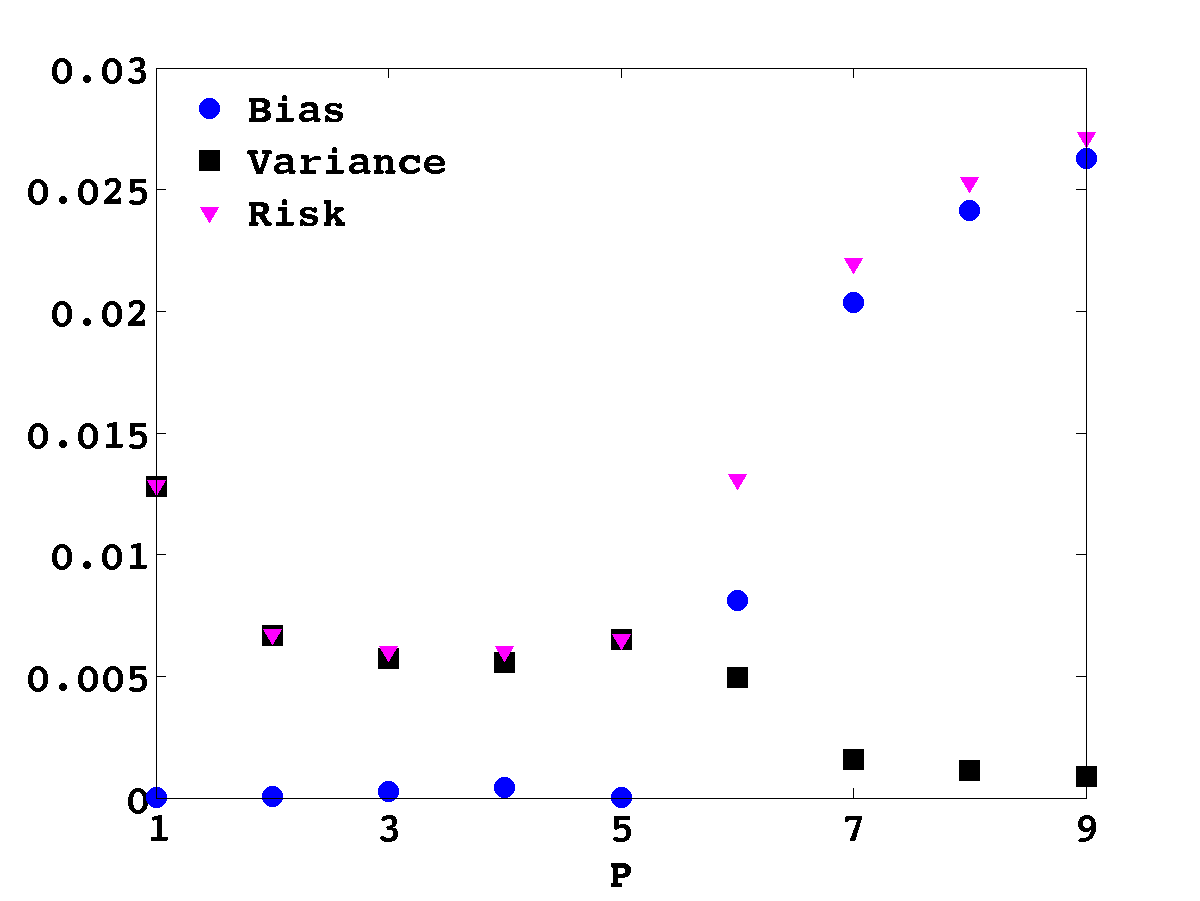}
      }
    ~ 
    \subfigure[Simulation $2$, $n=500$]
    {
        \includegraphics[height=1.5in]{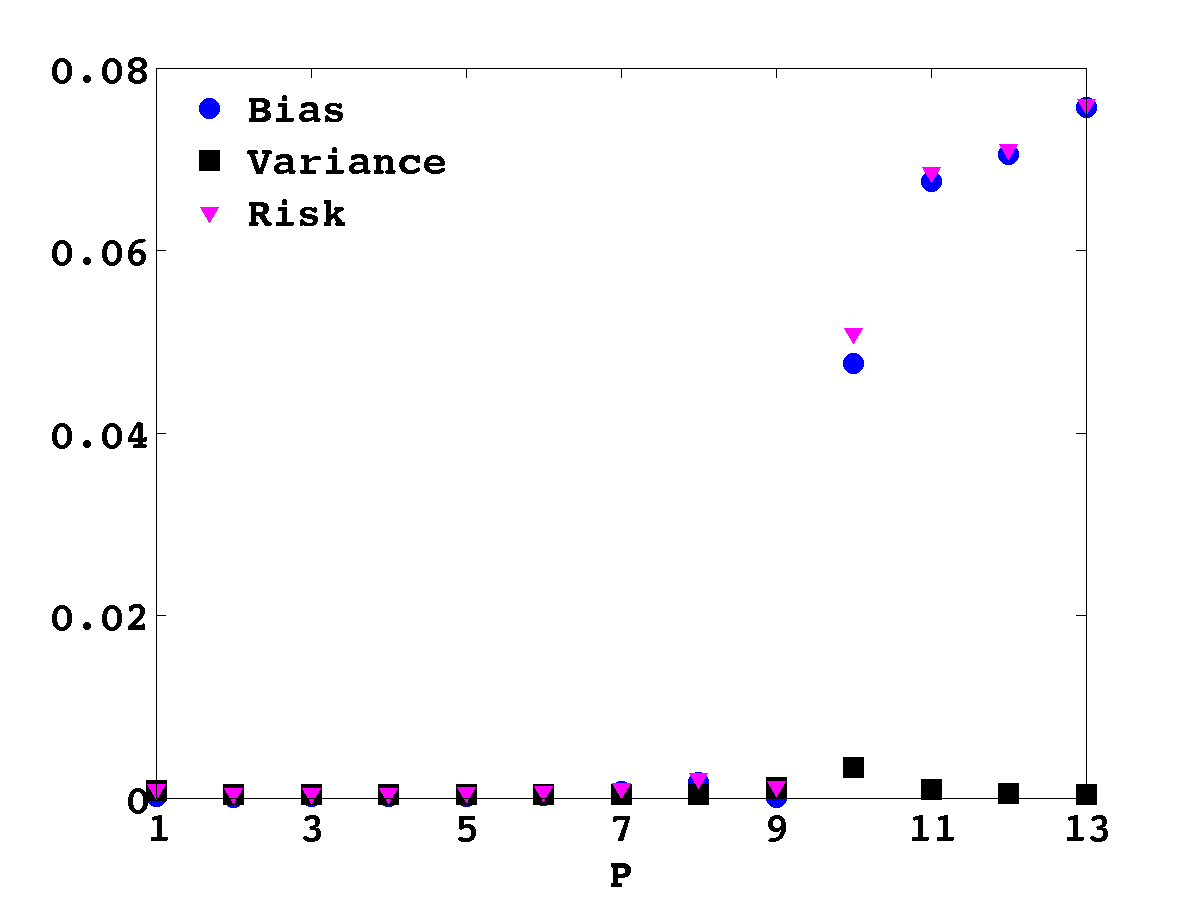}
      }
      ~ 
    \subfigure[Simulation $3$, $n=500$]
    {
        \includegraphics[height=1.5in]{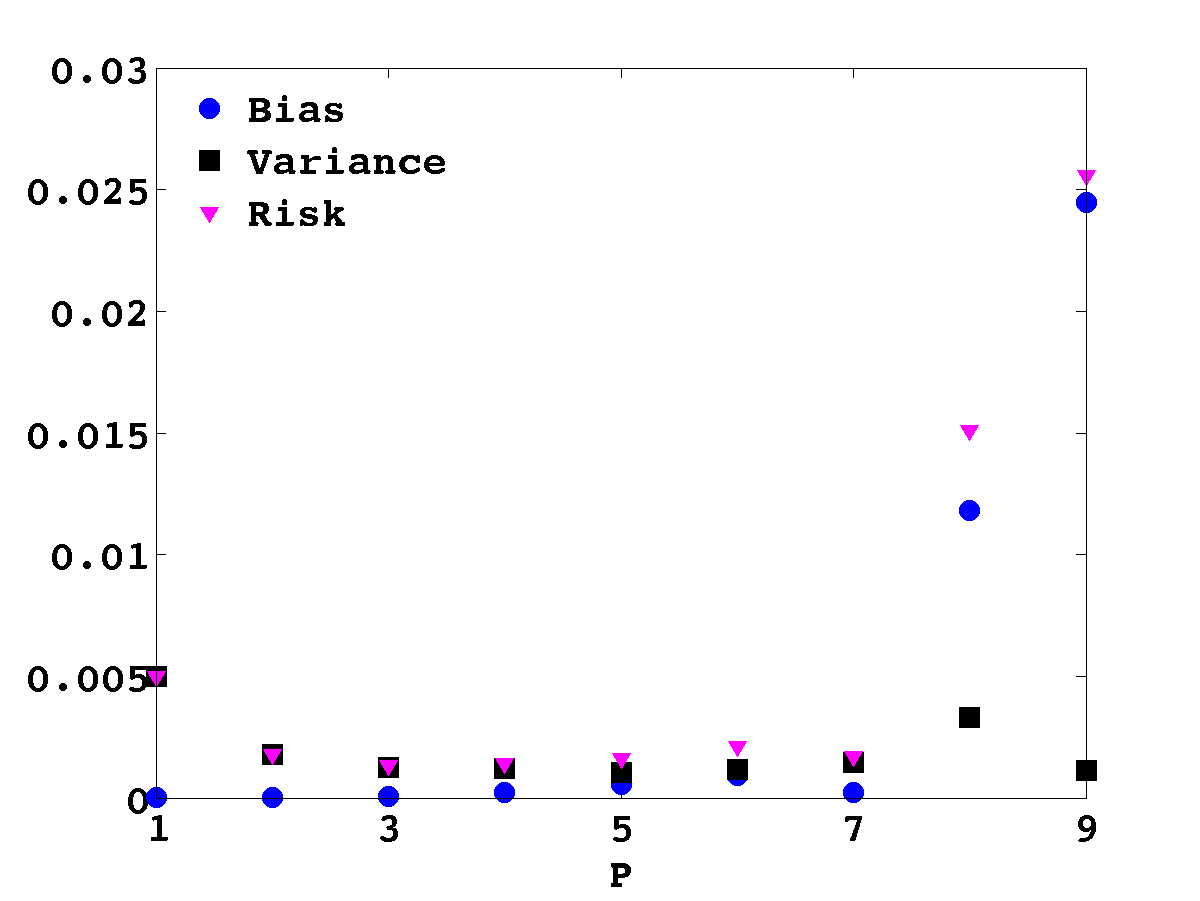}
      }
    \caption{Patterns of the risk (with black squares), the squared bias (with blue dots) and variance (with magenta triangles) with respect to $P=\log (M)/ \log (2)$ for simulations $1$, $2$ and $3$ and different values of $n$.}
    \label{fig:risk_biais_var}
\end{figure}

In real situations $R_n(\mathcal I_{2^P})$ is unknown, we now illustrate 
 the behaviour of the different criteria  $C_{CV}$ and $C_{CV1}$ and see how close to $R_n(\mathcal I_{2^P})$ they are. For the sake of conciseness we only display results for simulated data 1 and 2 and for $n=100, 500$ since they are very typical of all other simulation studies we have conducted. The results are presented in  Figure \ref{fig:pattern_criteria}, where the criteria $C_{CV}, C_{CV1}$ are computed based on a single data $X_{1:n}$. We see in figure \ref{fig:pattern_criteria} that contrarywise to $C_{CV}$, the basic cross-validated criterion  $C_{CV1}$ does not recover the behaviour of $R_n(\mathcal I_{2^P})$  correctly  as it fails to estimate the bias.  Note that we do not compare the values but the behaviour. Indeed, the criteria are used to choose the best $P$ by taking the minimum of the criterion so that the values are not important by themselves. Besides, we know that the criterion $C_{CV}$ is biased by a constant depending on $\mathcal{I}_0$. As theoretically explained in Section \ref{sec:modelselection} and as a consequence of Proposition \ref{prop:limit_thetaM}, we can see that the criteria $C_{CV1}$ are tending to $0$ when $P$ increases while it is not the case for the criteria $C_{CV}$. It is interesting to note that from  Figure \ref{fig:pattern_criteria}, the minimizer in $P$ of  $C_{CV}$ corresponds to values of the risk that are close to the minimum, we precise this impression with table \ref{ta:squared_risk}.

\begin{figure}[!ht]
\centering
    \subfigure[Simulation $1$, $n=100$, $C_{CV(1)}^{D,1}$]
        {\includegraphics[height=1.5in]{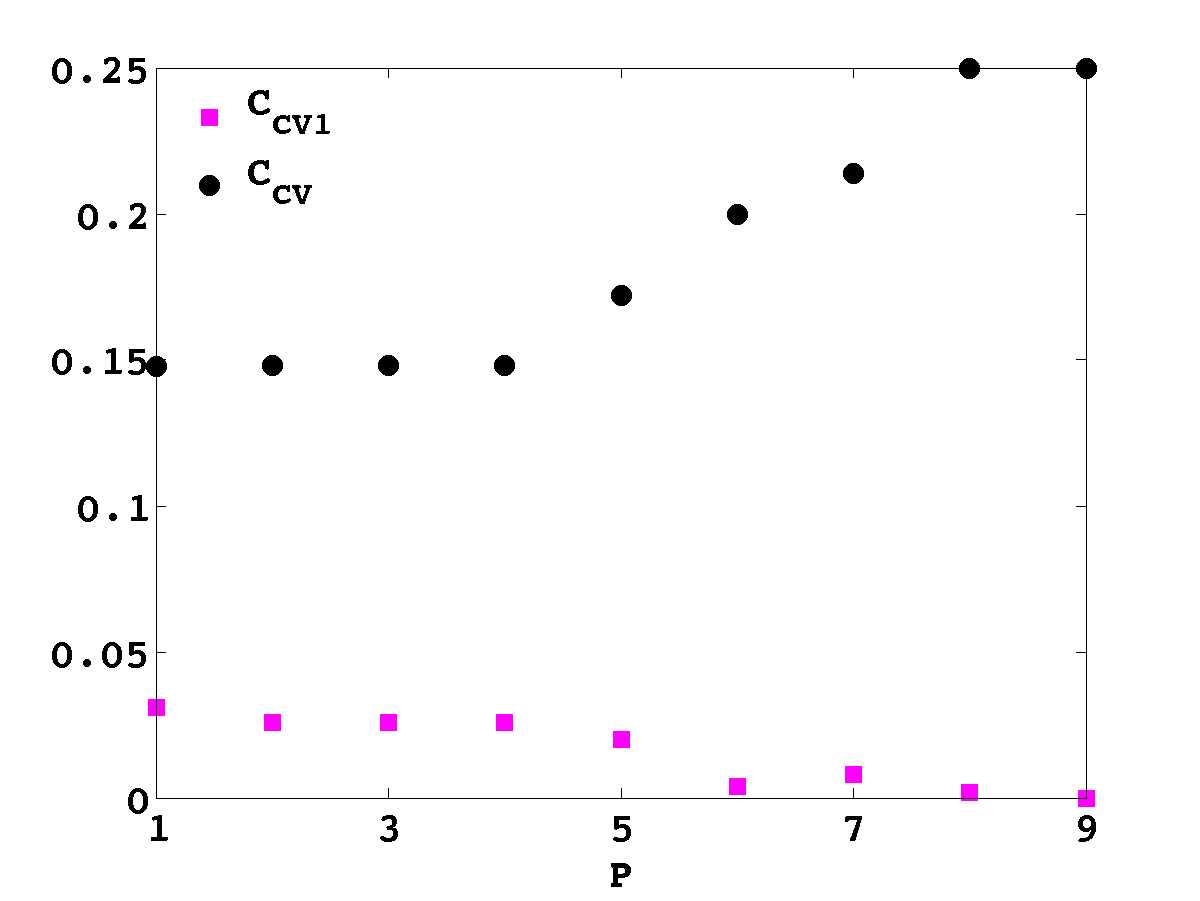}}   
    ~ 
    \subfigure[Simulation $1$, $n=100$, $C_{CV(1)}^{V,3}$]
    {
        \includegraphics[height=1.5in]{critere_ind1_J1N100.png}
      }
    ~ 
    \subfigure[Simulation $2$, $n=500$, $C_{CV(1)}^{D,1}$]
    {
        \includegraphics[height=1.5in]{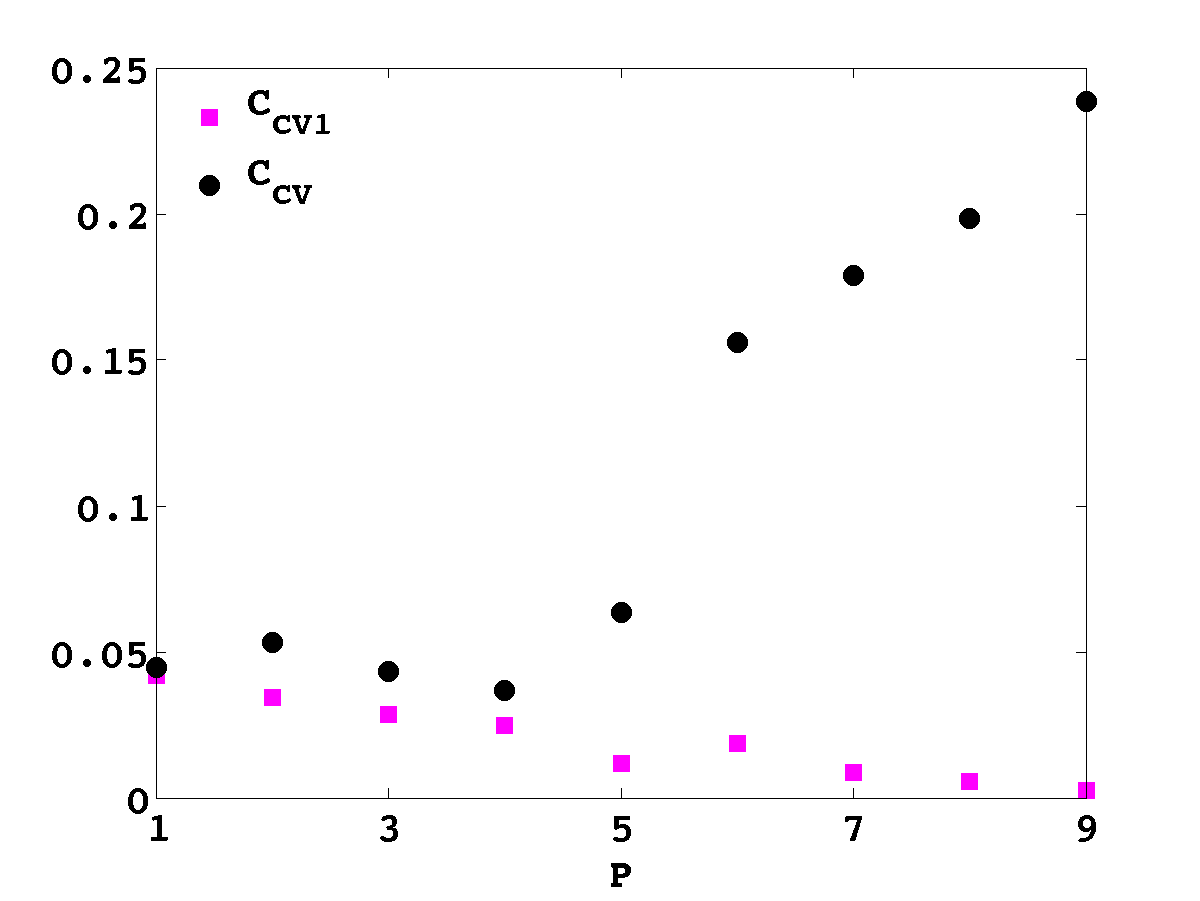}
      }
      ~ 
    \subfigure[Simulation $2$, $n=500$, $C_{CV(1)}^{V,3}$]
    {
        \includegraphics[height=1.5in]{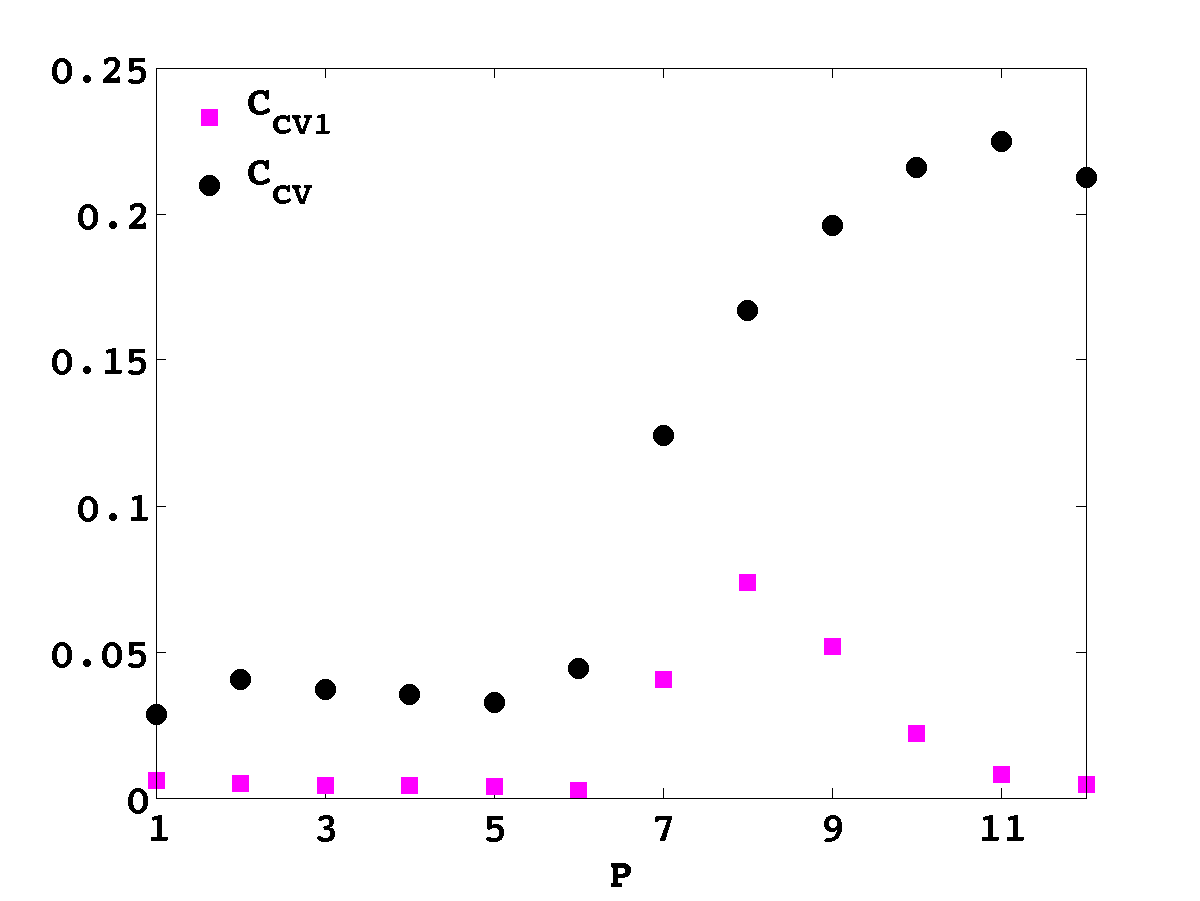}
      }
    
    \caption{behaviour of $C_{CV1}$ vs $C_{CV}$ as a function of $P$}
    \label{fig:pattern_criteria}
\end{figure}

Finally we compare the six criteria $C_{CV}^{j,c}$, $j\in \{D,V\}$, $c \in \{1,2,3\}$, by estimating the squared risk of the associated estimator 
$\widehat{\theta}_{2^{\widehat{P}_n^{j,c}}}$, 
presented
 in Table \ref{ta:squared_risk} across different sample sizes $n$ and   the three simulation set-ups (simulated data 1, 2 and 3) described above.  We can compare the six squared risk to $\sqrt{\min_{P\leq P_n} R_n(2^P)}$  and $\sqrt{ R_n(2^{P_0})}$. The different risks are estimated by Monte Carlo by repeating $100$ times the estimation. The differences of performance between the different criteria are not obvious. Besides, the performances of all the criteria are satisfactory, compared to 
 $\sqrt{\min_{P\leq P_n} R_n(2^P)}$.
  Yet, we suggest not to use criterion $C_{CV}^{V,1}$ because it is computationally more intensive than the others,  particularly when $n$ is large (because of large $b_n$). In our simulation study  $C_{CV}^{D,1}$ and $C_{CV}^{V,2}$ seem to behave slightly better than the others.

\begin{table}[ht]
\resizebox{\columnwidth}{!}{
\begin{tabular}{l|lllll|lll|lll}
 Simulation & $1$ & $1$ & $1$ &$1$ &$1$ &$2$ & $2$& $2$&$3$& $3$& $3$ \\
 $n$ & $50$ & $100$ &  $500$ & $1000$& $2000$&  $50$ & $100$ &  $500$ & $50$ & $100$ &  $500$  \\
 \hline
 $\sqrt{\min_{P\leq P_n} R_n(2^P)}$ & $0.062$ & $0.043$ & $0.020$& $0.014$ & $0.010$& $0.058$ & $0.046$ & $0.020$ & $0.096$ & $0.078$ & $0.036$ \\
 $\sqrt{ R_n(2^{P_0})}$ & $0.063$ & $0.046$ & $0.021$ & $0.015$ & $0.010$ & $0.067$ & $ 0.046$ & $0.022$ & $0.10$ & $0.082$ & $0.042$  \\
 \hline
 $\sqrt{\E^{\star} \left[\|\widehat{\theta}_{2^{\widehat{P}_n^{D,1}}}(X_{1:n})-\theta^{\star}\|^{2} \right] }$  
  & $0.069$ & $0.047$ & $0.019$ & $0.014$ & $0.011$ & $0.075$ & $0.056$ & $0.019$ & $0.12$ 	 & $0.087$ &  $0.037$ \\
 $\sqrt{\E^{\star} \left[\|\widehat{\theta}_{2^{\widehat{P}_n^{D,2}}}(X_{1:n})-\theta^{\star}\|^{2} \right] }$  
  & $0.073$ & $0.046$ & $0.022$ & $0.015$ & $0.010$ & $0.065$ & $0.056$ & $0.025$ & $0.10$ & $0.087$ &  $0.046$\\
 $\sqrt{\E^{\star} \left[\|\widehat{\theta}_{2^{\widehat{P}_n^{D,3}}}(X_{1:n})-\theta^{\star}\|^{2} \right] }$  &
 $0.086$ & $0.047$ & $0.021$ & $0.014$ & $0.010$ & $0.087$ & $0.056$ & $0.026$ & $0.11$ & $0.087$ & $0.041$ \\
 $\sqrt{\E^{\star} \left[\|\widehat{\theta}_{2^{\widehat{P}_n^{V,1}}}(X_{1:n})-\theta^{\star}\|^{2} \right] }$  
  & $0.091$ & $0.046$ & $0.021$ & $0.013$ & $0.009$ & $0.104$ & $0.055$ & $0.022$ & $0.11$ & $0.087$ & $0.053$ \\
$\sqrt{\E^{\star} \left[\|\widehat{\theta}_{2^{\widehat{P}_n^{V,2}}}(X_{1:n})-\theta^{\star}\|^{2} \right] }$  
 & $0.069$ & $0.046$ & $ 0.019$ & $0.013$ & $ 0.010$ & $0.070$ & $0.049$ & $0.022$ & $0.12$ & $0.084$ & $0.036$ \\
 $\sqrt{\E^{\star} \left[\|\widehat{\theta}_{2^{\widehat{P}_n^{V,3}}}(X_{1:n})-\theta^{\star}\|^{2} \right] }$  
  & $0.103$ & $0.046$ & $0.019$ & $0.014$ & $ 0.009$ & $0.10$ & $0.049$ & $0.022$ & $0.14$ & $0.083$ & $0.035$ \\
 \hline
\end{tabular}}
\caption{Comparison of the squared risk of estimators associated to different criteria}\label{ta:squared_risk}
\end{table}

These results confirm that by using $M_0$ small, the criterion behaves correctly. Moreover, the fact that the choice of $\widehat{I}_n$ corresponds to a risk associated with $a_n<n$ observations does not seem to be a conservative choice even in a finite horizon (i.e. when $n$ is fixed). We were expecting this behaviour asymptotically but not necessarily in a finite horizon.

\subsection{ On the choice of $M_0$ }\label{sse: choiceM0}

To compute the different criteria $C_{CV}^{j,i}$, $j\in\{D,V\}$, $i\in\{1,2,3\}$, we proposed to choose  $M_0$ as small as possible but for which the model is identifiable up to label switching. Given $\min_{1 \leq j \leq k} \theta^\star_j >0$, the model associated to the parameter space $ $ is identifiable as soon as the $k$ vectors $(\omega^\star_{1,c,\cdot})_{M_0}$, $\dots$, $(\omega^\star_{k,c,\cdot})_{M_0}$ in $\Delta_{M_0}$ are linearly independent for all $c\in\{1,2,3\}$.
 Considering the dimension of linear spaces, we may choose $M_0=k+1$ or $k+2$. Then we should generically avoid issues with identifiability. We chose such a $M_0$ in the previous simulations. We now study the impact of a choice of $M_0$ that would be too small .

To do so we have  simulated data from $f^\star_{2,1}(y)=f^\star_{2,2}(y)=f^\star_{2,3}(y)=1 $ and $f^\star_{1,1}(y)=f^\star_{1,2}(y)=f^\star_{1,3}(y)=1+\cos(2\pi (y+\epsilon)) $, for $\epsilon=0.25$, $0.3$, $0.4$ and $0.5$ and $n=500$, 1000 and 2000. In this case the smallest possible value for $M_0$ based on the regular grid on $[0,1]$ is $M_0= 2$ for $\epsilon \neq 0.5$ whereas if $\epsilon = 0.5$ the model is non identifiable at $M_0=2$ but becomes identifiable when $M_0 = 4$. For each of these simulation data we have computed various estimators: MLEs based on the EM algorithm initiated at different values, the posterior mean and the MAP estimator computed from a Gibbs sample algorithm with a  Dirichlet   prior distribution on $\theta$ and independently a  Dirichlet   prior distribution on each $\omega_{j,1} $, for $1\leq j \leq k$. We noticed that 
the EM algorithm with different initializations were very heterogeneous. Moreover, the MAP estimator, posterior mean and spectral estimators often had one of the $\hat \theta_j$ null or close to 0. Sometimes, the spectral estimator could not be computed.

The explanation for such behaviour is that when the model is not identifiable, one $\theta^{\star}_j$ may be null
 or the vectors $(\omega^\star_{1,c,\cdot})_{M_0}$, $\dots$ ,$(\omega^\star_{k,c,\cdot})_{M_0}$ may be linearly dependent for some $c\in\{1,2,3\}$. In this case, the likelihood will have multiple modes  (apart from those arising because of label switching). Hence a way to check that $M_0$ is not too small is to compute multiple initialisation of the EM algorithm if the MLE is estimated or to look for very small values of $\hat \theta_j$ in the case of Bayesian estimators (possibly also running multiple MCMC chains with different initial values). In practice we suggest that this analysis be conducted for a few number of values $M_0$ and then to select the value that leads to the most stable results.

 To illustrate this, we present a simulation study  where the number of estimators is $S=14$. The 10 first estimators were obtained using the EM algorithm with different random initializations, we also considered the spectral estimator proposed in \cite{kakade14} and an estimator obtained with the EM algorithm with the spectral estimator as initialization.
The last two estimators were the MAP estimator and the posterior mean. We considered regular partitions with $M_0=2$, $4$ and $6$ bins.  To present the results in a concise way we have summarized them in Figure \ref{fig:choix_M_0}, using the indicator
\[
ID(M_0)=
\frac{S ~ \mathds{1}_{\forall s\leq S, ~ j\leq 2,~ \hat{\theta}^s_{M_0,j} > 1/\sqrt{n} }}{\sum_{s=1}^S \left( \min_{j= 1,2} \hat{\theta}^s_{M_0,j} - \frac{1}{S}\sum_{t=1}^S  \min_{j= 1,2}\hat{\theta}^t_{M_0,j}  \right)^2}
\]
with $k=2$ and $\theta^\star =(0.7,0.3)$. 
Thus when $ID(M_0)=0$, there is a suspicion that the associated model is not identifiable and another partition should be chosen. It appears that for all $n$, when $\epsilon = 0.5$ $M_0=2$ always appears as having a pathological behaviour and for the other values of $\epsilon$ this value is accepted for large values of $n$. 

\begin{figure}[!ht]
\centering
    \subfigure[ $n=500$]
	{
	\begin{tabular}{l|cccc}
	 & $\epsilon=0.25$ &  $\epsilon=0.3$ & $\epsilon=0.4$ & $\epsilon=0.5$ \\
	 \hline
	 $M_0=2$ & 0  & 0 & 0 & 0\\
	  $M_0=4$ & {\bf 24} & 0 & {\bf 58} & 0 \\
	   $M_0=6$ & 24 & {\bf 10 }  & 0 & {\bf 28} \\
	\end{tabular}        
	        }   
	    ~ 
    \subfigure[$n=1000$]
    {
       \begin{tabular}{l|cccc}
 & $\epsilon=0.25$ &  $\epsilon=0.3$ & $\epsilon=0.4$ & $\epsilon=0.5$ \\
 \hline
 $M_0=2$ & 0 & 0 & 0 & 0\\
  $M_0=4$ & {\bf 26 } & {\bf 28 }  & {\bf 25 } & {\bf 63} \\
   $M_0=6$ & 0 & 26 & 16 & 10\\
\end{tabular}
      }
    ~ 
    \subfigure[ $n=2000$]
    {
        \begin{tabular}{l|cccc}
 & $\epsilon=0.25$ &  $\epsilon=0.3$ & $\epsilon=0.4$ & $\epsilon=0.5$ \\
 \hline
 $M_0=2$ & {\bf 113 } & {\bf 116} & 0 & 0\\
  $M_0=4$ & 31 & 38 & {\bf 38 } & {\bf 47} \\
   $M_0=6$ & 18 & 40 & 14 & 19 \\
\end{tabular}
      }

    \caption{The identifiability criterion $ID(M_0)$ for different $M_0$, $n$ and $\epsilon$.}
    \label{fig:choix_M_0}
\end{figure}

\section{Conclusion and discussion}
\label{sec:discussion}

To sum up our results, we propose semiparametric estimators of the mixing weights in a mixture with a finite number of components and unspecified emission distributions. These estimators are constructed using an approximate model for the mixture where the emission  densities are modelled as piecewise constant functions on fixed partitions of the sampling space. This approximate model is thus parametric and regular and more importantly well specified as far as the weight parameters $\theta$ are concerned. From Theorem  \ref{th:FreeLunch} we have that for all $M \geq M_0$ ,    $\sqrt{n}\left(\widehat{\theta}_{\tilde{M},n}-\theta^\star \right)\leadsto \mathcal{N}\left(\theta^\star, \tilde{J}_{\tilde{M}}^{-1} \right)$ as $n$ goes to infinity and that $
\tilde{J}_{\tilde{M}}^{-1} \rightarrow \tilde{J}^{-1}$ as $M$ goes to infinity (and similarly from a Bayesian point of view). Moreover we have proved in Section \ref{se:reasons_model_selection} that for all $n$, as $M$  goes to infinity,  $\widehat{\theta}_{M,n} \rightarrow \bar \theta_n $ and that  as $n \rightarrow +\infty $, $ \bar \theta_n \rightarrow (1/k, \cdots, 1/k)$ whatever the true value $\theta^*$ of the parameter.   These two results show that we can find a sequence $M_n$ going to infinity such that $\sqrt{n}\left(\widehat{\theta}_{\tilde{M_n},n}-\theta^\star \right)\leadsto \mathcal{N}\left(\theta^\star, \tilde{J}^{-1} \right)$ but also that we cannot choose $M_n$ going to infinity arbitrarily fast. It is thus important to determine a procedure to select $M$, for finite $n$.

To choose $M_n$ in practice, for finite $n$, we  propose in Section \ref{se:criterion_model_selection} an approach which consists in minimizing an estimate of the quadratic risk $R_n(\mathcal I)$ in the partition $\mathcal I$, as a way to ensure that the asymptotic variance of  $\sqrt{n}(\widehat \theta - \theta)  $ is close to $\tilde J^{-1}$ and that the quantity $\sqrt{n}(\widehat \theta - \theta) $ is asymptotically stable. The construction of an estimator of $R_n(\mathcal I)$ is not trivial due to the strong non linearity of the maximum likelihood estimator in mixture models and we use a reference model with a small number of bins $M_0$ as a proxy for an unbiased estimator $\theta$, together with a cross validation approach to approximate $R_{a_n}(\mathcal I$ for all partition $\mathcal I$ with $a_n = o(n)$. This leads at best to a minimization of the risk $R_{a_n}(\mathcal I_M)$ instead of $R_n(\mathcal I_M)$,  however this is it not per se problematic since a major concern is to ensure that  $M_n$ is not too large.

In the construction of our estimation procedure (either by MLE or based on the posterior distribution) we have considered the same partitionning of $[0,1]$ for each coordinate $c\in \{1, 2, 3\}$. This can be relaxed easily by using different partitions accross coordinates, if one wishes to do so to adapt to different smoothness of emission densities for instance. However, this would require choices of $M$ for each coordinate. We believe that our theoretical results would stay true.
We did more simulations in this setting and we observed that, when the emission distributions are distinct in each direction, choosing different $M$ for each coordinates is time consuming and does not really improve the estimations of $\theta$, at least in our examples.

 We have also presented our results under some seemingly restrictive assumptions, which we now discuss.

\subsection{On the structural assumptions on the model } 

In model \eqref{eq:model1}, it is assumed that each individual has three conditionally independent observations in $[0,1]$ each. Obviously this assumptions can be relaxed to any number $p$ of conditionally independent observations with $p \geq 3$ without modifying the conclusions of our results.

Also, the method of estimation relies heavily on the fact that the $X_{i,c}$'s belong to $[0,1]$. This is not such a restrictive assumption since one can transform any random variable on $\mathbb R$ into $[0,1]$, writing $X_{i,c} = G_c(\tilde{X}_{i,c})$ , where $G_c$ is a  given cumulative distribution on $\mathbb R$ and $\tilde X_{i,c}$ is the original observation. Then the conditional densities are obtained as 
$$ f_{j,c}^\star(x_c) =  f_{\tilde X;j,c}(G_c^{-1}(x_c))/g_c(G_c^{-1}(x_c)), \quad j \leq k, c\in \{1, \cdots, 3\}$$
and Assumption \eqref{moment:cond} becomes that for all $c\in \{1, \cdots, 3\}$,
\begin{equation} \label{moment2}
0 < \text{liminf}_{x_c} \frac{ f_{\tilde X;j_1,c}(G_c^{-1}(x_c)) }{ f_{\tilde X;j_2,c}(G_c^{-1}(x_c))} \leq \text{limsup}_{x_c} \frac{ f_{\tilde X;j_1,c}(G_c^{-1}(x_c)) } { f_{\tilde X;j_2,c}(G_c^{-1}(x_c))}< +\infty, 
\end{equation}
which means that the densities of the observations within each group have all the same tail behaviour. Note that a common assumption found in the literature for estimation of densities on $[0,1]$ is that the densities are bounded from above and below, which in the above framework of transformations $G_c$ amounts to saying that $f_{\tilde X;j_1,c}$'s have all the same tail behaviours as $g(\cdot )$. This is a much stronger assumption because it would mean that the tail behaviour of the densities $f_{\tilde X;j_1,c}$ is known a priori, whereas \eqref{moment2} only means that the tails are the same between the components of the mixtures but they not need to be the same as those of $g$. 

Finally we have considered univariate conditional observations $X_{i,c} \in \mathbb R$, again this can be relaxed easily by considering partitions of $[0,1]^d$ with $d \geq 1$ if $X_{i,c} \in [0,1]^d$. In this case the first part of Assumption (A2) needs to be replaced by :\\
$\bullet$ There exists $a>0$ such that for all $M$, for all $I_{m}$ in ${\cal I}_{M}$, there exists an open ball $I$ such that $I_{m}\subset I$ and $|I_{m}| \geq a |I|$.

\subsection{Extensions to Hidden Markov models } 

Finite mixture models all have the property that, when the approximation space for the emission distributions is that of step functions (histograms), then the model stays true for the observation process, but associated to the summary of the observations made of the counts in each bins.  This leads to a proper and well specified likelihood for the parameter $\theta, w$ and there is no problem of model misspecification as fgar as $\theta$ is concerned even when the number of bins is fixed and small.  We expect the results obtained in this paper to remain valid 
for nonparametric hidden Markov models with translated emission distributions studied in \cite{gassiat:rousseau:13} or for general nonparametric finite state space hidden Markov models studied in \cite{NPHMMYCE}, \cite{Ver13} and \cite{YCES}.
In the latter, the parameter describing the probability distribution of the latent variable is the transition matrix of the hidden Markov chain. However, semiparametric asymptotic theory for dependent observations is much more involved, see  \cite{MR1814795} for the ground principles. It seems difficult to identify the score functions and the efficient Fisher information matrices for hidden Markov models even in the parametric approximation model, so that to get results such as Theorem \ref{th:FreeLunch} could be quite challenging, nevertheless we think that the results obtained here pave the way to obtaining semi-parametric efficient estimation of the transition matrix in nonparametric hidden Markov models. 

\section{Proofs}
\label{sec:proofs}  
  
\subsection{Proof of Proposition \ref{prop:fisher}}
\label{subsec:proof:fisher}
Let us first prove that for large enough $M$,   the measures $f^{\star}_{1,c;M}dx,\ldots,f^{\star}_{k,c;M}dx$ are linearly independent. Indeed, if it is not the case, there exists a subsequence $M_{p}$ tending to infinity as $p$ tends to infinity and a sequence $(\alpha^{(p)})_{p\geq 1}$ in the unit ball of $\R^{k}$ such that for all $p\geq 1$,
$$
\sum_{j=1}^{k}\alpha^{(p)}_{j}f^{\star}_{j,c;M_{p}}(x)=0
$$
Lebesgue a.e. Let $\alpha=(\alpha_{1},\ldots,\alpha_{k})$ be a limit point of $(\alpha^{(p)})_{p\geq 1}$ in the unit ball of $\R^{k}$. Using Assumption (A.2) and Corollary 1.7 in Chapter 3 of 
\cite{MR2129625}, we have that as $p$ tends to infinity, $f^{\star}_{j,c;M_{p}}(x)$ converges to $f^{\star}_{j,c}(x)$ Lebesgue a.e. so that we obtain 
$\sum_{j=1}^{k}\alpha_{j}f^{\star}_{j,c}(x)=0
$
Lebesgue a.e., contradicting Assumption (A1).\\
Fix now $M$ large enough so that  the measures $f^{\star}_{1,c;M}dx,\ldots,f^{\star}_{k,c;M}dx$ are linearly independent. Then, one may use the spectral method described in \cite{kakade14} to get estimators $\widehat{\theta}_{sp}$ and  $\widehat{\boldsymbol{\omega}}_{M;sp}$ of the parameters $\theta$ and $\boldsymbol{\omega}_{M}$ from a sample of the multinomial distribution associated to density $g_{\theta,\boldsymbol{\omega};M}$. The estimator uses eigenvalues and eigenvectors computed from the empirical estimator of the multinomial distribution. But in a neighborhood of $\theta^{\star}$ and $\boldsymbol{\omega}^{\star}$, this is a continuously derivative procedure, and since on this neighborhood, classical deviation probabilities on empirical means hold uniformly, we get easily that for any vector $V\in \R^{k}$, there exists $K>0$ such that for all $c>0$,  for large enough $n$ (the size of the sample):
$$
\sup_{\|\theta-\theta^{\star}\| \leq \frac{c}{\sqrt{n}}} 
{\E_{\theta}\left[\left(\sqrt{n} \langle \widehat{\theta}_{sp}-\theta, V \rangle \right)^{2}\right]} \leq K.
$$
Now, the multinomial model is differentiable in quadratic mean, and following the proof of Theorem 4 in \cite{EGGS} one gets that, if $V^{T} \tilde{J}_{M} V = 0$, then
$$
\lim_{c\rightarrow +\infty} \lim_{n\rightarrow +\infty} \sup_{\|\theta-\theta^{\star}\| \leq \frac{c}{\sqrt{n}}} 
{\E_{\theta}\left[\left(\sqrt{n}\langle \widehat{\theta}_{sp}-\theta, V \rangle \right)^{2}\right]}=+\infty.
$$
 Thus  for all $V \in \R^{k}$,  $V^{T} \tilde{J}_{M} V \neq 0$, so that $\tilde{J}_{M}$ is not singular.

\subsection{Proof of Proposition \ref{prop:FisherGrow}}
\label{subsec:proof:FisherGrow}
We prove the proposition when $M_{1}=M$, $M_{2}=M+1$, ${\cal I}_{M}=\{I_{1},\ldots,I_{M}\}$ and ${\cal I}_{M+1}=\{I_{1},\ldots,I_{M,0},I_{M,1}\}$ with $I_{M}=I_{M,0}\cup I_{M,1}$, which is sufficient by induction. We denote $(\omega^{(M)}_{j,c,m})_{j,c, m}$ the parameter $\boldsymbol{\omega}$ in the model with partition ${\cal I}_{M}$ and $(\omega^{(M+1)}_{j,c,m})_{j,c, m }$ the parameter $\boldsymbol{\omega}$ in the model with partition ${\cal I}_{M+1}$.
Define $b\in (0,1)$, $\alpha_{j,c}\in (0,1)$, $j=1,\ldots,k$, $c=1,2,3$ so that
\begin{multline*}
|I_{M,0}|=(1-b) |I_{M}|,\;|I_{M,1}|=b |I_{M}|,\;\omega^{(M+1)}_{j,c,M}=(1-\alpha_{j,c})\omega^{(M)}_{j,c,M},\; \\ \omega^{(M+1)}_{j,c,M+1} = \alpha_{j,c}\omega^{(M)}_{j,c,M}.
\end{multline*}
Then, we may write 
$$
g_{\theta,\boldsymbol{\omega};M}(\mathbf{x})=\sum_{j=1}^{k}\theta_{j}\prod_{c=1}^{3}\prod_{m=1}^{M}\left(\frac{\omega^{(M)}_{j,c,m}}{|I_{m}|}\right)^{\1_{I_{m}}(x_{c})}
$$
and
\begin{align*}
&g_{\theta,\boldsymbol{\omega};M+1}(\mathbf{x})
\\ &=\sum_{j=1}^{k}\theta_{j}\prod_{c=1}^{3}\prod_{m=1}^{M-1}\left(\frac{\omega^{(M+1)}_{j,c,m}}{|I_{m}|}\right)^{\1_{I_{m}}(x_{c})}
\left[ \left(\frac{\omega^{(M+1)}_{j,c,M}}{|I_{M,0}|}\right)^{\1_{I_{M,0}}(x_{c})}\left(\frac{\omega^{(M+1)}_{j,c,M+1}}{|I_{M,1}|}\right)^{\1_{I_{M,1}}(x_{c})}\right]
\\
&=
\sum_{j=1}^{k}\theta_{j}\prod_{c=1}^{3}\prod_{m=1}^{M}\left(\frac{\omega^{(M)}_{j,c,m}}{|I_{m}|}\right)^{\1_{I_{m}}(x_{c})}
\left[\left(\frac{\alpha_{j,c}}{b}\right)^{\1_{I_{M,1}}(x_{c})}\left(\frac{1-\alpha_{j,c}}{1-b}\right)^{\1_{I_{M,0}}(x_{c})}\right].
\end{align*}
Thus, when $x_{c}\notin I_{M}$ for $c=1,2,3$, $g_{\theta,\boldsymbol{\omega};M+1}(\mathbf{x})=g_{\theta,\boldsymbol{\omega};M}(\mathbf{x})$ and computations have to take care of $\mathbf{x}$'s such that for some $c$, $x_{c}\in I_{M}$. If we parametrize  the model with partition ${\cal I}_{M+1}$ using the parameter $\left(\theta,(\omega^{(M)}_{j,c,m}),(\alpha_{j,c})\right)$ we get the same efficient Fisher information for $\theta$ as when parametrizing with $\left(\theta,(\omega^{(M+1)}_{j,c,m})\right)$.  Define the function $D$ as the difference between the gradient of $\log g_{\theta,\boldsymbol{\omega};M+1}$ and that of $\log g_{\theta,\boldsymbol{\omega};M}(\mathbf{x})$ with respect to the parameter $\left(\theta,(\omega^{(M)}_{j,c,m}),(\alpha_{j,c})\right)$:
 $$
 D(\mathbf{x}):=\nabla \log g_{\theta,\boldsymbol{\omega};M+1}(\mathbf{x}) - \nabla \log g_{\theta,\boldsymbol{\omega};M}(\mathbf{x}),
 $$
in particular the last coordinates of  $\nabla \log g_{\theta,\boldsymbol{\omega};M}(\mathbf{x})$ corresponding to the derivatives with respect to $(\alpha_{j,c})$ are zero.
Let us denote $K^{(M+1)}$ the Fisher information obtained for this new parametrization, that is 
\[K^{(M+1)}=\E^{\star}[(\nabla \log g_{\theta,\boldsymbol{\omega};M+1}(X))(\nabla \log g_{\theta,\boldsymbol{\omega};M+1}(X))^{T}]
.\] Easy but tedious computations give
$$
\E^{\star}[(\nabla \log g_{\theta,\boldsymbol{\omega};M}(X))(D(X))^{T}]=\left(\begin{array}{ccc} 0 &\cdots &0\\
\vdots & \vdots & \vdots \\0 &\cdots &0
\end{array}\right),
$$
so that
$$
K^{(M+1)} =\left(\begin{array}{cc} J_{M} & 0\\
0&0
\end{array} \right) + \Delta
$$
where   $\Delta=\E^{\star}[D(X)D(X)^{T}]$  is positive semi-definite.
As said before, $\tilde{J}_{M+1}$ is obtained from $K^{(M+1)}$ using the similar formula as from $J_{M+1}$. 
Then usual algebra gives  that $\tilde{J}_{M+1}\geq \tilde{J}_{M}$ since  $\Delta$ is positive semi-definite. 

\subsection{Proof of Lemma \ref{lem:convscore}}

Under (A1), 
the functions $f_{j,c}^\star$ are upper bounded. 
 Let, for any $M$, $\mathbb{A}_{M}$ be  the orthogonal projection  in  $L^{2}(g_{\theta^{\star},\mathbf{f}^\star}d\mathbf{x})$ onto ${\dot{\cal P}}_{M}$, the set of step functions spanned by the functions $\left(S^{\star}_{\boldsymbol{\omega},M}\right)_{j,c,m}$,  $j=1,\ldots,k$, $c=1,2,3$, $m=1,\ldots,M-1$. Then for all $j=1,\ldots,k-1$, 
$$
(\tilde{\psi}_{M})_{j}=\left(S^{\star}_{\theta,M}\right)_{j}-{\mathbb{A}}_{M} \left(S^{\star}_{\theta,M}\right)_{j},
$$
so that
\begin{equation}\label{eq:psi_tilde_decomp_project}
(\tilde{\psi})_{j}-(\tilde{\psi}_{M})_{j}=\left(S^{\star}_{\theta}\right)_{j}-\left(S^{\star}_{\theta,M}\right)_{j} - {\mathbb{A}}_{M}\left[  \left(S^{\star}_{\theta}\right)_{j}-\left(S^{\star}_{\theta,M}\right)_{j}\right]
+\left({\mathbb{A}}_{M}- {\mathbb{A}}\right)  \left(S^{\star}_{\theta}\right)_{j}.
\end{equation}
\noindent Using Assumption (A2) and Corollary 1.7 in Chapter 3 of 
\cite{MR2129625}, we have that as $M$ tends to infinity, $\left(S^{\star}_{\theta,M}\right)_{j}$ converges to $\left(S^{\star}_{\theta}\right)_{j}$ Lebesgue a.e. Both functions are uniformly upper bounded by the finite constant $1/\theta_{j}^{\star}$ using Assumption (A.1), so that $\left(S^{\star}_{\theta,M}\right)_{j}$ converges to $\left(S^{\star}_{\theta}\right)_{j}$ in $L^{2}(g_{\theta^{\star},\mathbf{f}^{\star}}(\mathbf{x})d\mathbf{x})$ as $M$ tends to $+\infty$ and $\left\|\left(S^{\star}_{\theta}\right)_{j}-\left(S^{\star}_{\theta,M}\right)_{j}
\right\|_{L^{2}(g_{\theta^{\star},\mathbf{f}^{\star}}d\mathbf{x})}$
converges to $0$  as $M$ tends to $+\infty$. 
Thus to prove that $(\tilde{\psi})_{j}-(\tilde{\psi}_{M})_{j}$ converges to $0$   in $L^{2}(g_{\theta^{\star},\mathbf{f}^\star}d\mathbf{x})$ when $M$ tends to $+\infty$,
we need only to prove that $\|\left({\mathbb{A}}_{M}- {\mathbb{A}}\right)  \left(S^{\star}_{\theta}\right)_{j}\|_{ L^2(g_{\theta^{\star},\mathbf{f}^{\star}})}$ converges to $0$. 
So we now prove that, for all $S \in L^2(g_{\theta^{\star},\mathbf{f}^{\star}})$, $\|{\mathbb{A}}_M S -{\mathbb{A}}S\|_{ L^2(g_{\theta^{\star},\mathbf{f}^{\star}})}$ converges to $0$ when $M$ tends to $+\infty$. \\
First we prove that ${\mathbb{A}}_M S$ converges in $ L^2(g_{\theta^{\star},\mathbf{f}^{\star}})$. Let 
 $L \geq M$ and set $\psi_S(M,L)  =\| {\mathbb{A}}_M S - {\mathbb{A}}_L S\|_{L^2(g_{\theta^{\star},\mathbf{f}^{\star}})}$. For large enough $M$, we have
 $\psi_S(M,L) = \| {\mathbb{A}}_M({\mathbb{A}}_LS) -{\mathbb{A}}_LS\|_{L^2(g_{\theta^{\star},\mathbf{f}^{\star}})} $, 
  since using (A3), ${\dot{\cal P}}_{M}  \subset  {\dot{\cal P}}_L$. It is easy to see that, for all $M$, $\psi_S(M,L)$ is  a monotone sequence, non decreasing  in 
  $L$, and bounded,  so that it converges to some $\psi^\star_S(M) \geq 0$. Moreover, since for all $L$,
  $\psi_S(M+1,L) \leq \psi_S(M,L)$, at the limit $\psi^\star_S(M)$ is monotone non increasing in $M$ and non-negative so that it converges. Let $\psi_S^\star$ be its limit. 
Because
\begin{align*} &{\mathbb{A}}_L S = {\mathbb{A}}_M ({\mathbb{A}}_L S) + (I-{\mathbb{A}}_M)({\mathbb{A}}_L S) = {\mathbb{A}}_M S + (I-{\mathbb{A}}_M)({\mathbb{A}}_L S), \\ 
&{\mathbb{A}}_M S \perp (I-{\mathbb{A}}_M)({\mathbb{A}}_L S),
\end{align*}
we get that
  $$\|{\mathbb{A}}_L S\|^2 = \|{\mathbb{A}}_M S\|^2 +  \|(I-{\mathbb{A}}_M)({\mathbb{A}}_L S)\|^2 = \|{\mathbb{A}}_M S\|^2 +  \psi_S(M,L)^2.$$
  Let $M$ be fixed. Then we get that 
  $$ \lim_{L\rightarrow +\infty }\| {\mathbb{A}}_L S\|^2 = \|{\mathbb{A}}_M S\|^2+ \psi^\star_S(M)^2$$
 and if we write $\ell$ the limit on the lefthandside of the equation, by letting now $M$ tend to infinity we get that
 $\ell = \ell + \psi_S^\star $. This in turns implies that  $\psi_S^\star = 0$. Now let $L_p, M_p$ converge to infinity as $p$ goes to infinity
 in such a way that for all $p$
 $L_p \geq M_p$ (which we can always assume by symmetry). Then
$$ \psi_S(M_p, L_p) \leq \psi_S^\star(M_p) \stackrel{p\rightarrow +\infty }{\rightarrow } 0
$$ so that the sequence ${\mathbb{A}}_MS$ is Cauchy in $L^2(g_{\theta^{\star},\mathbf{f}^{\star}})$ and converges. 
{Denote $\overline{\mathbb{A}} S$ its limit. Let us prove that $\overline{\mathbb{A}} S \in  {\dot{\cal P}}$. 
Any function in ${\dot{\cal P}}_M$ is a finite linear combination of functions $S_{j,c,M}$ of form
\begin{equation}
\label{help}
S_{j,c,M}(\mathbf{x})
=\frac{h_{j,c,M}(x_{c})\prod_{c'}f_{\omega^{\star}_{j,c';M}}(x_{c'})}{g_{\theta^{\star},\boldsymbol{\omega}^{\star}_{M};M}(\mathbf{x})}
\end{equation}
with $h_{j,c,M}\in {\cal H}_{j,c}$ is such that $h_{j,c,M} f_{\omega^{\star}_{j,c;M}}$ is a linear combination of indicator functions. It is thus enough to prove that the limit of any converging sequence of such functions is in ${\dot{\cal P}}$. 
Consider a sequence $S_{j,c,M}$ converging  to some variables $S^\star$ in $L^2(g_{\theta^{\star},\mathbf{f}^{\star}})$ as $M$ tends to infinity. Note that
$$ h_{j,c,M}(x_{c}) = \sum_{\ell = 1 }^k \theta^\star_\ell S_{j,c,M}(\mathbf{x}) \times \prod_{c'} \frac{ f_{\omega^{\star}_{\ell,c';M}}(x_{c'}) }{  f_{\omega^{\star}_{j,c';M}}(x_{c'})
} $$
so that condition \eqref{moment:cond} together with the almost sure convergence of $ f_{\omega^{\star}_{\ell,c';M}}(x_{c'})/ f_{\omega^{\star}_{j,c';M}}(x_{c'})$  towards $ f^{\star}_{\ell,c'}(x_{c'}) / f^{\star}_{j,c'}(x_{c'})$ implies that 
 $$ S_{j,c,M}(\mathbf{x}) \times \prod_{c'} \frac{ f_{\omega^{\star}_{\ell,c';M}}(x_{c'}) }{  f_{\omega^{\star}_{j,c';M}}(x_{c'})}\stackrel{L^2(g_{\theta^{\star},\mathbf{f}^{\star}})}{\rightarrow } S^\star\prod_{c'}\frac{ f^{\star}_{\ell,c'}(x_{c'}) }{ f^\star_{j,c'}(x_{c'})} := h,$$
 which in turns implies that $h_{j,c,M}$ converges in $L^2(f^{\star}_{j,c})$ to $h$.  
 Since $h_{j,c,M} \in \mathcal H_{j,c} $ for all $M$, $h \in \mathcal H_{j,c} $ and $S^\star \in  {\dot{\cal P}}$.
\\
}
We now prove that all function in $\dot{\cal P}$ is a limit in $L^2(g_{\theta^{\star},\mathbf{f}^{\star}})$ of functions in ${\dot{\cal P}}_M$. As before, it is enough to prove it for functions $S_{j,c}$ of form
$$
S_{j,c}(\mathbf{x})
=\frac{h(x_{c})\prod_{c'}f^{\star}_{j,c'}(x_{c'})}{g_{\theta^{\star},\mathbf{f}^{\star}}(\mathbf{x})}
$$
with $h\in {\cal H}_{j,c}$. We are thus looking for a sequence of functions $h_{M}\in  {\cal H}_{j,c}$  such that $h_{M} f_{\omega^{\star}_{j,c;M}}$ is a linear combination of indicator functions and such that $S_{j,c,M}$ as defined by (\ref{help}) converges to $S_{j,c}$ in $L^2(g_{\theta^{\star},\mathbf{f}^{\star}})$. Using Lemma 1.2 in \cite{MR2129625}, $h$ may be approximated by a continuous function, which in turns may be approximated by a (centred) linear combination of indicator functions and the result follows using again that  $\prod_{c'}f_{\omega^{\star}_{j,c';M}}/{g_{\theta^{\star},\boldsymbol{\omega}^{\star}_{M};M}}$ and $\prod_{c'}f^{\star}_{j,c'}/{g_{\theta^{\star},\mathbf{f}^{\star}}}$ are bounded (by $1/ \min_{j} \theta_{j}^{\star}$).
{Thus, we easily get that for all $S$ in $L^2(g_{\theta^{\star},\mathbf{f}^{\star}})$, ${\mathbb{A}}_M {\mathbb{A}} S- {\mathbb{A}}S$
converges to $0$ in $L^2(g_{\theta^{\star},\mathbf{f}^{\star}})$, so that $\overline{\mathbb{A}}{\mathbb{A}}={\mathbb{A}}$.\\
 Now, one easily deduces  that $\overline{\mathbb{A}}={\mathbb{A}}$. Indeed: if $S$ is in $\dot{\cal P}$, one has ${\mathbb{A}}S=S$, and then $\overline{\mathbb{A}}S=S$. If now $S$ is in the orthogonal of $\dot{\cal P}$, then for any $\tilde{S}\in \dot{\cal P}$, one has $\langle {\mathbb{A}}_{M} S, \tilde{S} \rangle =\langle  S, {\mathbb{A}}_{M}\tilde{S} \rangle$ which leads to $\langle \overline{\mathbb{A}} S, \tilde{S} \rangle = \langle S, \overline{\mathbb{A}}\tilde{S}\rangle =0$, so that $\overline{\mathbb{A}} S$ is in the orthogonal of $\dot{\cal P}$ and in $\dot{\cal P}$ so that 
$\overline{\mathbb{A}} S$=0.}

\subsection{Proof of Proposition \ref{prop:limit_thetaM}}
\label{sec:proofProp4}
Proposition \ref{prop:limit_thetaM} is easily implied by Lemma \ref{le:limit_thetaM} which formalizes the following.
When the sequence of observations $X_1, \dots, X_n$ and $n$ are fixed, then almost surely there exists a sufficiently fine partition ${\cal I}_M$ such that there exists at most one component of an observation in each set $I_m$, $m\leq M$. Then we can reorder the sets $I_m$ so that $X_{i,c} \in I_{i+n(c-1)}$, for all $c\in \{1,2,3\}$ and $i\leq n$. In this case, the likelihood $\ell_n(\cdot,\cdot;M)$ is maximised at each parameter $(\theta, \boldsymbol{\omega})$ belonging to the set $\mathcal{S}_M \subset \Delta_k \times (\Delta_M)^{3k} $ that we explain now (and formalise in  Lemma \ref{le:limit_thetaM}). Each element of $\mathcal{S}_M$ corresponds to one clustering of the observations in $k$ sets (represented by the $(A_j^\star)_{j\leq k}$ in Lemma \ref{le:limit_thetaM}) of size as equal as possible. For each clustering, for all $j\leq k$,\\
$\bullet$ $\theta_j=\#A^\star_j/n$ is the proportion of observations associated to $A^\star_j$
 (then the $\theta_j$ are almost equal to $1/k$),\\
$\bullet$ for all $c\in \{1,2,3\}$ and for all $l\leq M$,
\\
\begin{equation*}
\omega_{j,c,l}=
\left\{\begin{array}{ll}
1/\#A^\star_j & \text{ if } l-n(c-1)\in A^\star_j \text{ (i.e. $X_{l-n(c-1)}\in I_l$ is associated to } \\
& \text{the hidden state $j$),}\\
0 & \text{ if } l-n(c-1)\in \{1, \dots n\} \setminus A^\star_j
\text{ (i.e. $X_{l-n(c-1)}\in I_l$ is not}\\
& \text{ associated to $j$),}\\
0 & \text{ otherwise (i.e. there is no observation in $I_l$).}
\end{array}
\right.
\end{equation*} 

  \begin{lemma}\label{le:limit_thetaM}
   Let $X_1, \dots, X_n$ be fixed observations,
 as soon as for all $i\leq n$ and $c\in \{1,2,3\}$,  $X_{i,c}\in I_{i+n(c-1)}$ then the likelihood $\ell_n(\cdot,\cdot;M)$ is maximised at  $(\widehat{\theta}_M,\widehat{\boldsymbol{\omega}}_M)$ if and only if  $(\widehat{\theta}_M,\widehat{\boldsymbol{\omega}}_M)\in \mathcal{S}_M$ where
 \begin{align*}
 \mathcal{S}_M
 =\big\{ (\theta,\boldsymbol{\omega}): ~&
\theta_j= \#A^\star_j/n, ~\omega_{j,c,l}=\mathds{1}_{l-n(c-1) \in A^\star_j}/\#A^\star_j,\\
 & (J_1,J_2) \text{ partition of } \{1, \dots,k\}, ~\#J_2= n- k \lfloor n/k \rfloor =: r\\
 & (A^\star_j)_{j\leq k}
 \text{ partition of  }
 \{1, \dots, n\},
  \#A^\star_{j_1}=\lfloor n/k \rfloor=:q, \text{ for }  ~ j_1 \in J_1,\\& ~ \#A^\star_{j_2}=\lfloor n/k \rfloor +1=: q+1, \text{ for } j_2 \in J_2 \big\},
 \end{align*}
 and $n=kq+r, ~ 0\leq r \leq k-1$.
  \end{lemma}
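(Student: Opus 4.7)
My plan is to reparametrize the likelihood, apply an EM-type variational identity to recast the problem as the minimization of a concave functional, and then identify the minimizers with hard clusterings of balanced sizes. Under the fine-partition hypothesis the $3n$ indices $i+n(c-1)$, $i\leq n$, $c\in\{1,2,3\}$, are distinct, so $L:=\exp\ell_n(\cdot,\cdot;M)$ depends only on the coefficients $\beta_{j,c,i}:=\omega_{j,c,i+n(c-1)}$; the $|I_m|$'s factor out as a multiplicative constant. Since $L$ is coordinate-wise increasing in $\beta$, any maximizer puts zero mass on the ``leftover'' bins, i.e.\ $\omega_{j,c,l}=0$ for $l-n(c-1)\notin\{1,\dots,n\}$, and $\beta_{j,c,\cdot}\in\Delta_n$.

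For any $p\in K:=\prod_{i=1}^n\Delta_k$, Jensen's inequality gives $\log L(\theta,\beta)\geq Q(\theta,\beta;p)+H(p)$, with $Q(\theta,\beta;p)=\sum_{i,j}p_{i,j}[\log\theta_j+\sum_c\log\beta_{j,c,i}]$ and $H(p)=-\sum_{i,j}p_{i,j}\log p_{i,j}$, and equality when $p_{i,j}=\theta_j\prod_c\beta_{j,c,i}/g_{\theta,\beta}(X_i)$. Swapping maxima and solving the inner (M-step) problem in closed form, which yields $\theta_j^*=\hat n_j/n$ and $\beta_{j,c,i}^*=p_{i,j}/\hat n_j$ with $\hat n_j:=\sum_i p_{i,j}$, I obtain after simplification
\[
\max_{\theta,\beta}\log L(\theta,\beta) = -n\log n - 2\min_{p\in K} f(p),\qquad f(p):=\sum_j \hat n_j\log\hat n_j - \sum_{i,j}p_{i,j}\log p_{i,j}\,\geq 0.
\]

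The main difficulty, and the step I expect to need the most care, is proving that $f$ is concave on $K$. Writing $f=\sum_j f_j$ with $f_j(p_{\cdot,j})=\hat n_j\,H(p_{\cdot,j}/\hat n_j)$ for Shannon entropy $H$, one recognizes $f_j$ as the perspective function of a concave function (alternatively a direct Hessian computation plus Cauchy-Schwarz gives non-positivity of the quadratic form), hence $f_j$ and $f$ are concave. The minimum of a concave function over the convex compact set $K$ is attained at an extreme point, and the extreme points of $\prod_i\Delta_k$ are exactly the hard assignments $p_{i,j}=\mathds{1}_{z(i)=j}$ for some $z:\{1,\dots,n\}\to\{1,\dots,k\}$. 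At such $p$, $f(p)=\sum_j n_j\log n_j$ with $n_j=|z^{-1}(j)|$.

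Finally, the map $(n_1,\dots,n_k)\mapsto \sum_j n_j\log n_j$ on non-negative integer compositions of $n$ is Schur-convex (since $t\mapsto t\log t$ is strictly convex, transferring one unit from $n_a$ to $n_b$ whenever $n_a\geq n_b+2$ strictly decreases it), so it is minimized exactly at the balanced compositions with $r$ parts equal to $q+1$ and $k-r$ parts equal to $q$. Substituting these sizes into the M-step formulas recovers $\theta_j=\#A^*_j/n$ and $\omega_{j,c,l}=\mathds{1}_{l-n(c-1)\in A^*_j}/\#A^*_j$, which is exactly the description of $\mathcal S_M$. Tracking the equality cases through Jensen's inequality, the extreme-point reduction, and the Schur-convexity step forces, conversely, every maximizer to lie in $\mathcal S_M$, yielding the common maximum value $1/n^n\prod_j 1/n_j^{2n_j}$.
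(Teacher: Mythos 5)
Your route is genuinely different from the paper's: the paper locates the zero pattern of maximizers directly (its properties \ref{hyp:P1}--\ref{hyp:P3}), solves the first-order Lagrange conditions inside each pattern, and uses second partial derivatives to kill the critical points where an observation is shared between components; you instead use the EM variational identity to reduce the whole problem to minimizing the concave functional $f(p)=\sum_j \hat n_j\log \hat n_j-\sum_{i,j}p_{i,j}\log p_{i,j}$ over $K=\prod_i\Delta_k$, and then invoke extreme points and Schur-convexity. Your forward computations are correct: the reduction to the $\beta$-coordinates, the M-step algebra giving $\max_{\theta,\beta}\log L=-n\log n-2\min_K f$ (up to the constant in the $|I_m|$'s), the concavity of $f$ (perspective of the entropy, or Hessian plus Cauchy--Schwarz), and the Schur-convexity step identifying the balanced compositions. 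This yields the value of the maximum and that every element of $\mathcal S_M$ attains it.

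The gap is in the converse, which you dispose of in one sentence (``tracking the equality cases \dots through the extreme-point reduction''). Concavity gives that the minimum of $f$ over $K$ is attained \emph{at some} extreme point, not that it is attained \emph{only} at extreme points: a concave function can be constant on a positive-dimensional face, in which case non-extreme minimizers exist, and your $f$ really has this degeneracy. Along an edge of $K$ where observation $i_0$'s responsibility moves between components $a$ and $b$ while all other observations stay hard-assigned (with counts $n_a^-,n_b^-$ in $a,b$), one finds
\begin{equation*}
f''(t)=\frac{1}{n_a^-+t}+\frac{1}{n_b^-+1-t}-\frac{1}{t}-\frac{1}{1-t}\;\leq\;0,
\end{equation*}
with $f$ constant on the edge exactly when $n_a^-=n_b^-=0$. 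So when two components are empty apart from the shifted observation, soft minimizers of $f$ do exist, and they produce likelihood maximizers outside $\mathcal S_M$; this is precisely what happens when $n<k$ (a regime in which $\mathcal S_M$ itself is ill-defined, since some $\#A_j^*=0$), so your one-line claim cannot be true as stated without a restriction. To close the gap for $n\geq k$ you need the following argument: since $f$ is concave, its minimizer set is a union of faces of $K$; if it contained a non-extreme point it would contain an entire edge of $K$ whose two endpoints are minimizing extreme points, hence balanced hard assignments by your Schur-convexity step; but balanced counts satisfy $n_a^-,n_b^-\geq q\geq 1$ when $n\geq k$, contradicting the equality condition $n_a^-=n_b^-=0$ above. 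With this supplement (and the explicit hypothesis $n\geq k$, which the paper also leaves implicit), your proof is complete and, in my view, cleaner than the original.
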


\begin{proof}

Since the set of parameters is compact and the likelihood is a continuous function of the parameters then the maximum is attained. 

 If $(\theta,\boldsymbol{\omega})$ maximises the likelihood $\ell_n(\cdot,\cdot;M)$, 
 \begin{enumerate}[label=(P\arabic*)]
 \item\label{hyp:P1} then, for all $1\leq i \leq n$, there exists $1 \leq j \leq k$ such that $\omega_{j,c,i+n(c-1)}>0$ for all $c \in \{1,2,3\}$. 
 \\
 Indeed, if there exists $1\leq i \leq n$ such that for all $1 \leq j \leq k$, $\omega_{j,c,i+n(c-1)}=0$ for some $c\in\{1,2,3\}$, then   
\begin{multline*}
\ell_n(\theta,\omega;M)
= \sum_{i=1}^n \log\left( \sum_{j=1}^k \theta_j \prod_{c=1}^3 \omega_{j,c,i+n(c-1)}  \right)
\\+ \underbrace{ \sum_{i=1}^n \log\left( 1/(|I_i| |I_{i+n}| |I_{i+2n}| ) \right)}_{\text{constant}}=-\infty
.\end{multline*} 
 \item\label{hyp:P2} and if there exists $j,c,i$ such that $\omega_{j,c,i+n(c-1)}=0$ and $\theta_j>0$  then $\omega_{j,d,i+n(d-1)}=0$ for all $d$. 
 \\
 Indeed otherwise you can give the weight $\omega_{j,d,i+n(d-1)}$, to one of the other $\omega_{j,d,s+n(d-1)}$ for which $\omega_{j,e,s+n(e-1)}>0$, for all $e\neq d$ (which exist otherwise take $\theta_j=0$ which would increase the likelihood) and this increases the likelihood.
 \item\label{hyp:P3} and if  $\theta_j>0$, then $\omega_{j,c,l}=0$ if $l-n(c-1)\notin \{1, \dots,n\}$. \\
 Indeed, in this case, there is no observation in $I_l$ so that $\omega_{j,c,l}$ does not appear in the likelihood and we conclude similarly as the previous point.
\end{enumerate}   
 Combining all the previous remarks, we know that the maximum can only be attained (and is at least once) in one of the following sets, indexed by $J \subset \{1, \dots,k\}$ which determines the zeros of $\theta$  and  $A_j \subset\{1, \dots, n\}$, $j\leq k$, which determine the zeros of $\omega$: 
\begin{align*} 
\mathcal{S}_{J,A_1, \dots, A_k} 
= & \{ \theta \in \Delta_k: ~ \theta_j>0, ~  j \in J, \theta_j=0, ~ j \in J^c \} \\
	& \times \prod_{j \leq k } \Big\{ (\omega_{j,1,\cdot},\omega_{j,2,\cdot},\omega_{j,3,\cdot}) \in (\Delta_M)^3: \\
	& \hspace*{1.5cm} \text{ if }j\in J, ~ \underset{
	\text{\rotatebox{90}{$\Rsh$} using \ref{hyp:P2}
	\qquad \raisebox{-.5mm}{\rotatebox{90}{$\Lsh$}} using \ref{hyp:P3}}}{\omega_{j,c,i + n(c-1)}>0}, \text{ if } ~ i \in A_j, ~ c \in\{1,2,3\}\\
	& \hspace*{1.5cm} \text{ and } \omega_{j,c,l}=0, \text{ if } l \in \{1,\dots,M\} \setminus \{ i + n(c-1), ~ i\in A_j\}  \Big\}
.
\end{align*}
Note that we do not assume that  $(A_j)_{j\in J}$ is  a partition of $\{1, \dots, n\}$.

We fix $J \subset \{1, \dots,k\}$ and $A_j \subset\{1, \dots, n\}$, $j\in J$. Now we search for parameters $(\bar{\theta}, \bar{\boldsymbol{\omega}})$ in $\mathcal{S}_{J,A_1, \dots, A_k}$  which maximize the likelihood. They are zeros of the derivative of 
\begin{equation}\label{eq:deriv_log_vraiss}
(\theta,\boldsymbol{\omega},\lambda, \mu)\mapsto \ell_n(\theta,\boldsymbol{\omega};M) + \lambda\left(\sum_{j=1}^k \theta_j-1\right) + \sum_{c=1}^3 \mu_{j,c} \left(\sum_i \omega_{j,c,i} -1\right),
\end{equation}
with respect to non zero components ($\theta_j$, $\omega_{j,c,i+n(c-1)}$, $\lambda$ and $\mu_{j,c}$, for $j \in J$, $i \in A_j$, $1\leq c \leq 3$).
Annulling the partial derivatives give
\begin{align}
\sum_{i \in A_j} \frac{\bar{\omega}_{j,1,i}\bar{\omega}_{j,2,i+n}\bar{\omega}_{j,3,i+2n}}{\sum_{s \in J(i)} \bar{\theta}_s \bar{\omega}_{s,1,i}\bar{\omega}_{s,2,i+n}\bar{\omega}_{s,3,i+2n}}
 & =  - \lambda, & \forall j \in J
\label{eq:diffe_ln_pj} \\
\frac{\bar{\theta}_j \prod_{d\neq c} \bar{\omega}_{j,d,i+n(d-1)}}{\sum_{s \in J(i)} \bar{\theta}_s \bar{\omega}_{s,1,i}\bar{\omega}_{s,2,i+n}\bar{\omega}_{s,3,i+2n}}
& = - \mu_{j,c} ,& \forall j\in J, ~ i \in A_j, ~ c \in \{1,2,3\}
\label{eq:diffe_ln_omega}\\
\sum_{j\in J} \bar{\theta}_j & = 1 ,& 
\label{eq:diffe_ln_lambda}\\
\sum_{i\in A_j} \bar{\omega}_{j,c,i+n(c-1)} &= 1 ,&\forall j\in J, ~ c \in \{1,2,3\},
\label{eq:diffe_ln_mu}
\end{align}
where $J(i)=\{ s \in J: ~ i\in A_s\}$.

Multiplying Equation \eqref{eq:diffe_ln_omega} by  $\bar{\omega}_{j,c,i+n(c-1)}$ and then summing the result over $i\in A_j$ and using Equation \eqref{eq:diffe_ln_mu}, we obtain that $\mu_{j,c}$ does not depend on $c$. Then using Equations \eqref{eq:diffe_ln_omega} for $c=1$, $c=2$ and $c=3$, we obtain
\[\bar{\theta}_j  \bar{\omega}_{j,1,i}\bar{\omega}_{j,2,i+n} = 
\bar{\theta}_j  \bar{\omega}_{j,1,i}\bar{\omega}_{j,3,i+2n}=
\bar{\theta}_j  \bar{\omega}_{j,2,i+n}\bar{\omega}_{j,3,i+2n}
,\]
so that 
\begin{equation}\label{eq:egalite_omega_c}
\bar{\omega}_{j,1,i}=\bar{\omega}_{j,2,i+n}=\bar{\omega}_{j,3,i+2n}
.
\end{equation}

Furthermore, multiplying Equation \eqref{eq:diffe_ln_pj} by $\bar{\theta}_j $  and summing the result over $j \in J$ and using Equation \eqref{eq:diffe_ln_lambda}, we obtain $\lambda=-n$.
 Moreover by multiplying Equation \eqref{eq:diffe_ln_omega} by $\bar{\omega}_{j,c,i+n(c-1)}$, and then summing the result over $i \in A_j$ and finally subtracting \eqref{eq:diffe_ln_pj} multiplied by $\bar{\theta}_j$ to the result (ie making $\sum_{i\in A_j} (-\bar{\theta}_j) \eqref{eq:diffe_ln_pj} + \bar{\omega}_{j,c,i+n(c-1)} \eqref{eq:diffe_ln_omega}  $), we get 
 \begin{equation}\label{eq:egalite_muc}
0= - \mu_{j,c} -n\bar{\theta}_j.
 \end{equation}
  Then using again Equations \eqref{eq:diffe_ln_omega}, \eqref{eq:egalite_omega_c} and \eqref{eq:egalite_muc}, we get
\begin{equation*}
\bar{\omega}^2_{j,c,i+n(c-1)}=n \sum_{s\in J(i)} \bar{\theta}_s \bar{\omega}^3_{s,1,i}, \quad \forall j \in J(i), ~ \forall c  \in \{1,2,3\},
\end{equation*}
 so that $\bar{\omega}_{j,c,i+n(c-1)}$ does not depend on $j \in J(i)$ and 
 \begin{equation}\label{eq:zeros_omega}
 \bar{\omega}_{j,c,i+n(c-1)}=\mathds{1}_{i\in A_j}/\left(n \sum_{s \in J(i)} \bar{\theta}_s \right), \quad \forall j \in J(i).
 \end{equation}

 For each $\mathcal{S}_{J,A_1, \dots, A_k}=:\mathcal{S}$, we have obtained the zeros of the derivative of the log-likelihood, that we now denote $(^\mathcal{S}\bar{\theta},^\mathcal{S}\!\bar{\boldsymbol{\omega}})$, to emphasize the dependence with the considered set $\mathcal{S}$. We now want to know which of these zeros $(^\mathcal{S}\bar{\theta},^\mathcal{S}\!\bar{\boldsymbol{\omega}})$ are local maxima thanks to the second partial derivatives. 
 
We consider sets $\mathcal{S}_{J,A_1, \dots, A_k}$ for which there exists $i\leq n$ such that there exist $j$ and $l$ are in $J(i)$ and $j \neq i$. We consider a second partial derivative of 
\[
\tilde{\ell}_n(\theta, \tilde{\boldsymbol{\omega}};M)
=
\sum_{i=1}^n \log\left(\sum_{j=1}^k \theta_j (\tilde{\omega}_{j,1,i})^3 \right)
\]
 that is the log-likelihood (up to an additive constant) associated to the model where for all $1\leq m \leq k$, $1 \leq s \leq n$, 
 $\omega_{m,1,s}=\omega_{m,2,s+n}=\omega_{m,3,s+2n}$.
Assume without loss of generality that $\theta_l\geq \theta_j$, then (using that $\theta_k=1-\sum_{m<k}\theta_m$ and $\omega_{j,1,n}=1-\sum_{s<n}\omega_{j,1,s}$),
\[
\frac{\partial^2 \tilde{\ell_n}}{ \partial \tilde{\omega}_{j,1,i}^2}(^\mathcal{S}\bar{\theta},^\mathcal{S}\!\bar{\boldsymbol{\omega}};M)
= C \left(6 ~ ^\mathcal{S}\bar{\theta}_j \!\!\!\!\!\!\!\!\sum_{m\in J(i) \setminus \{ j\}} \!\!\!\!\!\!\!\!\!\! ~^\mathcal{S}\bar{\theta}_m -3 ~ ^\mathcal{S}\bar{\theta}_j^2\right)
\geq  C \left(6 ~ ^\mathcal{S}\bar{\theta}_j  ~ ^\mathcal{S}\bar{\theta}_l -3  ~ ^\mathcal{S}\bar{\theta}_j^2\right)
>0
,\] 
 where $C>0$. This implies that for all sets $\mathcal{S}_{J,A_1, \dots, A_k}:={\cal S}$ where there exists $i\leq n$ such that $\#J(i)>1$, every zeros $(^\mathcal{S}\bar{\theta},^\mathcal{S}\!\bar{\boldsymbol{\omega}})$ is not a local maximum.
 So that the only possible local maxima of $\ell_n(\theta,\boldsymbol{\omega};M)$ are the zeros $(^{\mathcal{S}_{J,A_1, \dots, A_k}}\bar{\theta},^{\mathcal{S}_{J,A_1, \dots, A_k}}\!\bar{\omega})$ where $\#J(i)=1$ for all $i\leq n$, i.e. when $(A_j)_{j\in J}$ forms a partition of $\{1, \dots, n\}$. 
 
 So we now only consider sets $A_j$, ${j\in J}$  which form a partition of $\{1, \dots, n\}$ and  $\bar{\omega}_{j,c,i+n(c-1)}=\mathds{1}_{i\in A_j}/ (n \bar{\theta}_j)$ for $i\in A_j$, using Equation \eqref{eq:zeros_omega}. As $\sum_{i\in A_j}\! \bar{\omega}_{j,1,i}\! = 1$, we then obtain that 
 $\bar{\theta}_j=  \#A_j/n=1/ (n \bar{\omega}_{j,1,i}) $, for all $i\in A_j$. So that we now only have to choose the best partition
  $(A_j)_{j \in J}$ of $\{1, \dots, n\}$ and $J$.
   Let $N_j=\#A_j$, we know that $\sum_j N_j=n$ and the log-likelihood at the local maximum   $(^\mathcal{S}\bar{\theta}, ^\mathcal{S}\!\bar{\omega})$ associated to $\mathcal{S}_{J,A_1, \dots, A_k}=:\mathcal{S}$ is 
 \[
 \ell_n(^\mathcal{S}\bar{\theta}, ^\mathcal{S}\!\bar{\omega};M)=\sum_{s \in J} N_s \log (N_s^{-2})+constant.
 \]
 So that we want to minimize 
 \begin{equation}\label{eq:minimi_avec_J}
 \sum_{s\in J} N_s \log (N_s) \text{ under the constraint }\sum_{s\in J} N_s=n
 \end{equation}
  over $J \subset \{1,\dots k\}$ and $N_j\in \mathbb{N}$, $j\in J$. This minimization is equivalent to the minimization of 
 \begin{equation}\label{eq:minimi_sans_J}
\sum_{s\leq k} N_s \log (N_s) \text{ under the constraint }\sum_{s\leq k} N_s=n
 \end{equation}
  over $N_j\in \mathbb{N}$, $j\leq k$ (since then the problem \eqref{eq:minimi_sans_J} is less constrained than for the minimization of \eqref{eq:minimi_avec_J} when $J$ is fixed).
 
 And, when $k$ divides $n$, the minimum of \eqref{eq:minimi_sans_J} is attained at $N_s=n/k$. Otherwise, when $k$ does not divide $n$, consider only two indices  $s_1$, $s_2$ in $\{1, \dots, k\}$ and assume that $N_s$, $s\notin\{s_1,s_2\}$ are fixed such that $N_{s_1}+N_{s_2}=S_N$ is also fixed. Then we want to minimise $-N_{s_1}\log (N_{s_1}) - (S_N-N_{s_1}) \log (S_N-N_{s_1})$.
 Studying the function $x \in(0,S_N) \mapsto -x \log(S_N) -(S_N-x) \log (S_N-x)$, we obtain that the minimum is attained when $N_{s_1}$ and $N_{s_2}=S_N-N_{s_1}$ are the closest of $N_S/2$. Then in both cases, the MLE is attained at 
 every $(\theta,\boldsymbol{\omega}) \in \mathcal{S}_M$.

\end{proof}

\subsection{Proof of Corollary \ref{co:bound_for M_n}}
\label{sec:proof_cor_lim_Mn}
Suppose that for all $N>0$ and all $C>0$, there exists $n\geq N$ such that 
\[
n^2 \left(\max_{m\leq M_n} |I_m|\right)^2 M_n \leq C.
\]
So that there exists a subsequence $(\phi(n))_{n\in \mathbb{N}}$ of $(n)_{n\in \mathbb{N}}$ such that
\begin{equation}\label{eq:sous_suite_conv_0}
(\phi(n))^2 \left(\max_{m\leq M_{\phi(n)}} |I_m| \right)^2 M_{\phi(n)} \underset{n \to \infty}{\longrightarrow} 0
.
\end{equation}
Set $\epsilon >0$, by Proposition \ref{prop:limit_thetaM}, there exists $N_1>0$ such that for all $n\geq N_1$,
\begin{align}
P&\left( \left| \widehat{\theta}_{M_n}(X_{1:\phi(n)}) -(1/k, \dots, 1/k) \right| \leq \epsilon \right) 
\nonumber\\
& \geq P\left( \left\{  \exists ~ 1\leq i_1,i_2 \leq \phi(n), ~ 1 \leq c,d \leq 3, ~ m \leq M_{\phi(n)} :~  X_{i_1,c}\in I_m, X_{i_2,d} \in I_m  \right\}^c \right)
\nonumber\\
& \geq 1- \sum_{ i_1 =1}^{\phi(n)} \sum_{ i_2 =1}^{\phi(n)} \sum_{ m=1 }^{ M_{\phi(n)}} P\left(X_{i_1,c}\in I_m, X_{i_2,d} \in I_m \right)
\nonumber\\
& \geq 1- (\phi(n))^2 M_{\phi(n)} \max\left( \sup g, (\sup g)^2 \right)   \left(\max_{m \leq M_{\phi(n)}} |I_m| \right)^2 
. \label{eq:lim_contradiction}
\end{align}
Using Equations \eqref{eq:sous_suite_conv_0} and \eqref{eq:lim_contradiction} and Assumption (A3), then $\widehat{\theta}_{M_n}(X_{1:\phi(n)})$ tends in probability to $(1/k, \dots, 1/k)$ which contradicts the convergence in law of $\widehat{\theta}_{M_n}$ to $\theta^\star$. This concludes the proof.

\subsection{Proof of Theorem \ref{th:construction_Mn}}
\label{sec:proof_th_Mn}
We first recall Lemma 2.1 in  \cite{Arl:2014:hdr}:

\begin{lemma}[Sylvain Arlot]\label{le:Arlot}
Let $A,B,C,R: {\cal M} \to \mathbb{R}$.
If for all $m,m' \in {\cal M}$,
\[
(C(m)-R(m))-(C(m')-R(m')) \leq A(m) +B(m'),
\]
then for all $\widehat{m} \in {\cal M}$ such that 
$C(\widehat{m}) \leq \inf_{m \in {\cal{M}}} C(m) +\rho $, $\rho>0$,
\[
R(\widehat{m})-B(\widehat{m}) \leq \inf_{m \in {\cal{M}}} 
\{R(m)+A(m)\} +\rho  
.\]
\end{lemma}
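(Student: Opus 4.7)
The statement is a purely deterministic algebraic rearrangement, so the plan is very short. The main idea is to specialise the hypothesis at the selected index $\widehat{m}$ and then invoke its near-minimality for $C$; no analytic, measurability, or probabilistic input is needed.

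Concretely, I would fix an arbitrary $m \in {\cal M}$ and apply the standing hypothesis with $m'$ replaced by $\widehat{m}$. This yields
\[
(C(m)-R(m)) - (C(\widehat{m})-R(\widehat{m})) \leq A(m) + B(\widehat{m}),
\]
which rearranges to
\[
R(\widehat{m}) - B(\widehat{m}) \leq R(m) + A(m) + \bigl(C(\widehat{m}) - C(m)\bigr).
\]
Next, the defining property of $\widehat{m}$ gives $C(\widehat{m}) \leq \inf_{m'' \in {\cal M}} C(m'') + \rho \leq C(m) + \rho$, so the last bracket on the right is at most $\rho$. The resulting inequality
\[
R(\widehat{m}) - B(\widehat{m}) \leq R(m) + A(m) + \rho
\]
holds for every $m \in {\cal M}$, and taking the infimum over $m$ on the right-hand side delivers the claim.

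The only genuine decision in the proof is which of the two variables in the hypothesis to specialise to $\widehat{m}$: one must substitute $m' = \widehat{m}$ (not $m = \widehat{m}$) so that the quantity $R(\widehat{m}) - B(\widehat{m})$ we wish to bound from above appears on the correct side of the inequality with the correct sign, and so that $C(\widehat{m}) - C(m) \leq \rho$ can be used to eliminate the $C$-terms. After that substitution the lemma follows in two lines, which is precisely what makes it a convenient deterministic workhorse for cross-validation oracle inequalities such as Theorem \ref{th:construction_Mn}.
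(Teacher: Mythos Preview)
Your proof is correct. The paper does not actually prove this lemma---it merely recalls it from Arlot's work---so there is nothing to compare against, but your two-line argument (specialise $m'=\widehat{m}$, then use near-minimality of $C$ at $\widehat{m}$ and take the infimum over $m$) is exactly the standard derivation.
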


We are going to use this lemma with $R({\cal I})=R_{a_n}({\cal I})$, $C({\cal I})=C_{CV}({\cal I})$ and
\[A({\cal I})=B({\cal I})= \epsilon_n R({\cal I}) + \delta_n.
\]
Using Hoeffding's inequality,
\begin{align*}
P\left(\{ - B({\cal I}) \leq C_{CV}({\cal I}) - R_{a_n}({\cal I}) 
{- \E^{\star}\left[ \|\tilde{\theta}_{{\cal I}_0} (X_{B_{-b}}) - \theta^\star \|^{2}_{\mathcal{T}_k} \right]}
 \leq A({\cal I}) \}^c\right)
 \\ \leq  2 \exp\left( -2 b_n A({\cal I})^2 \right),
\end{align*}
since $\|\tilde{\theta}_{{\cal I}}(X_{B_b})-\tilde{\theta}_{{\cal I}_0} (X_{B_{-b}})\|^{2} \leq 1$, for all $b$.
We introduce the sets
\begin{equation}\label{eq:hoeffding}
{\cal S}_{\cal I} = \left\{ - B({\cal I}) \leq C_{CV}({\cal I}) - R_{a_n}({\cal I}) 
{-  
 \E^{\star}\left[ \|\tilde{\theta}_{{\cal I}_0} (X_{B_{-b}}) - \theta^\star \|^{2}_{\mathcal{T}_k} \right]}
\leq A({\cal I}) \right\}
\end{equation}
for all ${\cal I} \in {\cal M}_n$.
Using Lemma \ref{le:Arlot},
on the set $\cap_{{\cal I} \in {\cal M}_n} {\cal S}_{\cal I}$,
Equation \eqref{eq:ineq_selection_model} holds and using Equation \eqref{eq:hoeffding}, we obtain
\[
P(\cap_{{\cal I} \in {\cal M}_n} {\cal S}_{\cal I})
\geq
  1-2m_n \exp{\left(-2b_n \left(\epsilon_n \inf_{ {\cal I} \in {\cal M}_n } R_{a_n}({\cal I}) + \delta_n\right)^2\right)}
  .\]

\subsection{Proof of Proposition \ref{prop:alm_oracle_ineq}}
\label{sec:proof_le_Mn}
Using Theorem \ref{th:construction_Mn},
\begin{align*}
&\E^{\star} \left[ a_n R_{a_n}(\widehat{{\cal I}}_n)\right] \\
&\leq
 a_n \left(\frac{1+ \epsilon_n}{1- \epsilon_n} \inf_{ {\cal I} \in {\cal M}_n } R_{a_n}({\cal I}) + \frac{ 2 \delta_n }{ 1 -\epsilon_n } \right)\\
&\quad +
2 a_n m_n
\exp \left( -2b_n \left(\epsilon_n \inf_{ {\cal I} \in {\cal M}_n } R_{a_n}({\cal I}) + \delta_n\right)^2 \right)
\end{align*}
we can conclude by taking
$\epsilon_n=\delta_n = 1/(\log(n) a_n) $.

\subsection*{Acknowledgements}
This work was partly supported by the grants ANR Banhdits and Calibration. 
{ We want to thank the reviewers and the associate editor for their helpful comments.}

\bibliographystyle{plain}
\bibliography{biblio}

\begin{thebibliography}{10}

\bibitem{allman2009identifiability}
E.~S. Allman, C.~Matias, and J.~A. Rhodes.
\newblock Identifiability of parameters in latent structure models with many
  observed variables.
\newblock {\em Ann. Statist.}, 37(6A):3099--3132, 12 2009.

\bibitem{kakade14}
A.~Anandkumar, R.~Ge, D.~Hsu, S.~M. Kakade, and M.~Telgarsky.
\newblock Tensor decompositions for learning latent variable models.
\newblock {\em JMLR}, 15:2773--2832, 2014.

\bibitem{To10}
T.~Ando.
\newblock {\em Bayesian model selection and statistical modeling}.
\newblock Statistics: Textbooks and Monographs. CRC Press, Boca Raton, FL,
  2010.

\bibitem{Arl:2014:hdr}
S.~Arlot.
\newblock {\em Contributions to statistical learning theory: estimator
  selection and change-point detection}.
\newblock Habilitation {\`a} diriger des recherches, University Paris Diderot,
  December 2014.
\newblock Habilitation {\`a} diriger des recherches.

\bibitem{MR2602303}
S.~Arlot and A.~Celisse.
\newblock A survey of cross-validation procedures for model selection.
\newblock {\em Stat. Surv.}, 4:40--79, 2010.

\bibitem{barbe:bertail:95}
P.~Barbe and P.~Bertail.
\newblock {\em The Weighted {B}ootstrap}, volume~98 of {\em Lecture Notes in
  Statistics}.
\newblock Springer, 1995.

\bibitem{maugis2008slope}
J.-P. Baudry, C.~Maugis, and B.~Michel.
\newblock Slope heuristics: overview and implementation.
\newblock {\em Stat. Comput.}, (22):455--470, 2012.

\bibitem{MR1245941}
P.~J. Bickel, C.~A.~J. Klaassen, Y.~Ritov, and J.~A. Wellner.
\newblock {\em Efficient and adaptive estimation for semiparametric models}.
\newblock Johns Hopkins Series in the Mathematical Sciences. Johns Hopkins
  University Press, Baltimore, MD, 1993.

\bibitem{bickel:kleijn}
P.~J. Bickel and B.~J.~K. Kleijn.
\newblock The semiparametric {B}ernstein-von {M}ises theorem.
\newblock {\em Ann. Statist.}, 40(1):206--237, 2012.

\bibitem{MR3476609}
S.~Bonhomme, K.~Jochmans, and J.-M. Robin.
\newblock Estimating multivariate latent-structure models.
\newblock {\em Ann. Statist.}, 44(2):540--563, 2016.

\bibitem{BoJoRo16}
S.~Bonhomme, K.~Jochmans, and J.-M. Robin.
\newblock Non-parametric estimation of finite mixtures from repeated
  measurements.
\newblock {\em J. R. Stat. Soc. Ser. B. Stat. Methodol.}, 78(1):211--229, 2016.

\bibitem{boucherongassiat09}
S.~Boucheron and E.~Gassiat.
\newblock A {B}ernstein-von {M}ises theorem for discrete probability
  distributions.
\newblock {\em Electron. J. Stat.}, 3:114--148, 2009.

\bibitem{VdLaan}
M.~A. Brookhart and M.~J. van~der Laan.
\newblock A semiparametric model selection criterion with applications to the
  marginal structural model.
\newblock {\em Comput. Statist. Data Anal.}, 50(2):475--498, 2006.

\bibitem{icsan}
I.~Castillo.
\newblock Semiparametric {B}ernstein--von {M}ises theorem and bias, illustrated
  with {G}aussian process priors.
\newblock {\em Sankhya A}, 74(2):194--221, 2012.

\bibitem{icbvm}
I.~Castillo.
\newblock A semiparametric {B}ernstein--von {M}ises theorem for {G}aussian
  process priors.
\newblock {\em Probab. Theory Related Fields}, 152(1-2):53--99, 2012.

\bibitem{ClHj08}
G.~Claeskens and N.~L. Hjort.
\newblock {\em Model selection and model averaging}, volume~27 of {\em
  Cambridge Series in Statistical and Probabilistic Mathematics}.
\newblock Cambridge University Press, Cambridge, 2008.

\bibitem{deblasi09}
P.~De~Blasi and N.~L. Hjort.
\newblock The {B}ernstein--von {M}ises theorem in semiparametric competing
  risks models.
\newblock {\em J. Statist. Plann. Inference}, 139(7):2316--2328, 2009.

\bibitem{NPHMMYCE}
Y.~De~Castro, {E}. Gassiat, and C.~Lacour.
\newblock Minimax adaptive estimation of nonparametric hidden {M}arkov models.
\newblock {\em JMLR}, 17(111), 2016.

\bibitem{YCES}
Y.~De~Castro, E.~Gassiat, and S.~Le~Corff.
\newblock Consistent estimation of the filtering and marginal smoothing
  distributions in nonparametric hidden {M}arkov models.
\newblock {\em I.E.E.E. Trans. Info. Th.}, 63(8):4758--4777, 2017.

\bibitem{EGGS}
E.~Gassiat, D.~Pollard, and G.~Stoltz.
\newblock Revisiting the van {T}rees inequality in the spirit of {H}ajek and
  {L}e {C}am.
\newblock {\em unpublished manuscript}, 2013.

\bibitem{gassiat:rousseau:13}
{E}. Gassiat and J.~Rousseau.
\newblock Non parametric finite translation hidden {M}arkov models and
  extensions.
\newblock {\em Bernoulli}, 22(1):193--212, 2016.

\bibitem{hansen:yu:01}
M.~H. Hansen and B.~Yu.
\newblock Model selection and the principle of minimum description length.
\newblock 96:746--774, 2001.

\bibitem{MR0444690}
J.~B. Kruskal.
\newblock Three-way arrays: rank and uniqueness of trilinear decompositions,
  with application to arithmetic complexity and statistics.
\newblock {\em Linear Algebra and Appl.}, 18(2):95--138, 1977.

\bibitem{lecam_yang00}
L.~Le~Cam and G.~Yang.
\newblock {\em Asymptotics in Statistics. Some Basic Concepts, Second Edition}.
\newblock Springer-Verlag, New-York, 2000.

\bibitem{MR2915165}
Gyemin Lee and Clayton Scott.
\newblock E{M} algorithms for multivariate {G}aussian mixture models with
  truncated and censored data.
\newblock {\em Comput. Statist. Data Anal.}, 56(9):2816--2829, 2012.

\bibitem{massart03}
P.~Massart.
\newblock {\em Concentration inequalities and model selection}, volume 1896 of
  {\em Lecture Notes in Mathematics}.
\newblock Springer, Berlin, 2007.
\newblock Lectures from the 33rd Summer School on Probability Theory held in
  Saint-Flour, July 6--23, 2003, With a foreword by Jean Picard.

\bibitem{MR1814795}
B.~McNeney and J.~A. Wellner.
\newblock Application of convolution theorems in semiparametric models with
  non-i.i.d.\ data.
\newblock {\em J. Statist. Plann. Inference}, 91(2):441--480, 2000.
\newblock Prague Workshop on Perspectives in Modern Statistical Inference:
  Parametrics, Semi-parametrics, Non-parametrics (1998).

\bibitem{rhodes2010concise}
J.~A. Rhodes.
\newblock A concise proof of kruskal's theorem on tensor decomposition.
\newblock {\em Linear Algebra and Appl.}, 432(7):1818--1824, 2010.

\bibitem{JuRi}
V.~Rivoirard and J.~Rousseau.
\newblock Bernstein-von {M}ises theorem for linear functionals of the density.
\newblock {\em Ann. Statist.}, 40(3):1489--1523, 2012.

\bibitem{robert01}
C.P. Robert.
\newblock {\em The {B}ayesian Choice}.
\newblock Springer-Verlag, New York, second edition, 2001.

\bibitem{shen:2002}
X.~Shen.
\newblock Asymptotic normality of semiparametric and nonparametric posterior
  distributions.
\newblock {\em J. Amer. Statist. Assoc.}, 97(457):222--235, 2002.

\bibitem{MR2129625}
E.~M. Stein and R.~Shakarchi.
\newblock {\em Real analysis}.
\newblock Princeton Lectures in Analysis, III. Princeton University Press,
  Princeton, NJ, 2005.
\newblock Measure theory, integration, and Hilbert spaces.

\bibitem{AadBook}
A.~W. van~der Vaart.
\newblock {\em Asymptotic statistics}, volume~3 of {\em Cambridge Series in
  Statistical and Probabilistic Mathematics}.
\newblock Cambridge University Press, Cambridge, 1998.

\bibitem{AadStFlour}
A.~W. van~der Vaart.
\newblock Semiparametric statistics.
\newblock In {\em Lectures on probability theory and statistics
  ({S}aint-{F}lour, 1999)}, volume 1781 of {\em Lecture Notes in Math.}, pages
  331--457. Springer, Berlin, 2002.

\bibitem{Ver13}
{E}. Vernet.
\newblock Posterior consistency for nonparametric {H}idden {M}arkov {M}odels
  with finite state space.
\newblock {\em Electronic Journal of Statistics}, 9:717--752, 2015.

\end{thebibliography}
 \end{document}